\documentclass[11pt]{article}
\usepackage{latexsym}
\usepackage{verbatim}
\usepackage{amsmath}
\usepackage{amsthm}
\usepackage{amssymb}
\usepackage{stmaryrd}
\usepackage{enumerate}
\usepackage{graphicx}
\usepackage[all]{xy}
\def\old#1{}

\newif\ifshortversion
\shortversiontrue

\newcommand{\multiset}[1]{\overline{ \{ } {#1} \overline{\} }}
\newcommand{\bmultiset}[1]{\overline{ \big\{ } {#1} \overline{\big\} }}
\newcommand{\Bmultiset}[1]{\overline{ \bigg\{ } {#1} \overline{\bigg\} }}

\newcommand{\fracpart}[1]{\left\langle #1\right\rangle}
\newtheorem{theorem}{Theorem}[section]
\newtheorem{corollary}[theorem]{Corollary}
\newtheorem{lemma}[theorem]{Lemma}

\newtheorem{example}[theorem]{Example}
\newtheorem{proposition}[theorem]{Proposition}
\newtheorem{construction}[theorem]{Construction}
\newtheorem{problem}[theorem]{Problem}

\def\case{\begin{array}{c@{\quad \mbox{if} \quad} l }}
\def\BB{\mathcal{B}}
\def\CC{\mathcal{C}}
\def\FF{\mathcal{F}}
\def\EE{\mathcal{E}}
\def\HH{\mathcal{H}}
\def\GG{\mathcal{G}}
\def\LL{\mathfrak{L}}

\def\TT{\mathcal{T}}
\def\KK{\mathcal{K}}
\def\AA{\mathcal{A}}
\def\AAA{\mathbb{A}}
\def\MMM{\mathbb{M}}
\def\DD{\mathcal{D}}
\def\Bb{\mathbb{B}}
\def\Pp{\mathbb{P}}
\def\lcm{\hbox{\rm{lcm}}}
\def\upshad{\reflectbox{$\partial $}}
\def\supp{\operatorname{supp}}
\def\high{\operatorname{high}_j}
\def\low{\operatorname{low}_j}
\newcommand{\prtime}{{\count0=\time\divide\count0 by 60
     \count1=-\count0\multiply\count1 by 60 \advance\count1 by \time
     \the\count0:\the\count1} }

\def\myheads#1;#2;{
\pagestyle{myheadings} \markboth{{\sc\hfill
     #1\hfill\protect\makebox[0cm][r]{\rm\today; \prtime}}}
     {{\sc\protect\makebox[0cm][l]{\rm\today;\ \prtime}\hfill
     #2\hfill}} \thispagestyle{myheadings} }
     \def\old#1{}
\title{Mixed  orthogonal arrays,  $k$-dimensional $M$-part Sperner multi-families, and  full multi-transversals}
\author{
     Harout Aydinian\footnote{This author was supported by DFG-project AH46/7-1 ``General Theory of Information Transfer''.}\\
     {\tt ayd@math.uni-bielefeld.de}\\
     University of Bielefeld, POB 100131, D-33501 Bielefeld, Germany\\
     \'Eva Czabarka\footnote{This author have been supported by a PIRA
     grant of the University of South Carolina and the hospitality of the Rheinische Friedrich-Wilhelms Universit\"at, Bonn.}\\
     {\tt czabarka@math.sc.edu}\\
     University of South Carolina, Columbia, SC 29208, USA\\
     L\'aszl\'o A. Sz\'ekely\footnote{This author was supported in part
     by the NSF DMS contracts No. 0701111, No. 1000475, and by the Alexander
     von Humboldt-Stiftung at the Rheinische Friedrich-Wilhelms Universit\"at, Bonn.}\\
     {\tt szekely@math.sc.edu}\\
     University of South Carolina, Columbia, SC 29208, USA\\ \\ \\
     {\sl This paper is dedicated to the memory of Professor Rudolf Ahlswede.}
          }
     \date{}
\begin{document}
\maketitle
Mathematics Subject Classification 2010: 05B40; 05D15; 05D05; 05B15; 62K15

Keywords: transversal; packing; extremal set theory;  Sperner theory;
mixed orthogonal array; packing array; BLYM inequality; multiset

\begin{abstract} Aydinian {\sl et al.}  [J. Combinatorial Theory A
{\bf 118}(2)(2011), 702--725] substituted the usual BLYM inequality for $L$-Sperner
families with a set of  $M$ inequalities for $(m_1,m_2,\ldots,m_M;L_1,L_2,\ldots,L_M)$ type
$M$-part Sperner families and showed that if all inequalities hold with equality, then
the family is homogeneous. Aydinian {\sl et al.}  [Australasian J. Comb. {\bf 48}(2010),
133--141] observed that all  inequalities hold with equality if and only if the transversal
of the Sperner family corresponds to a simple mixed orthogonal array with constraint
$M$, strength $M-1$, using $m_i+1$ symbols in the $i^{\text{th}}$ column. In this paper 
we define
$k$-dimensional $M$-part 
Sperner multi-families with parameters $L_P:~P\in\binom{[M]}{k}$ and
prove $\binom{M}{k}$ BLYM inequalities for them.
We show that if $k<M$ and
all inequalities hold with equality, then these multi-families must be homogeneous with profile matrices
that are 
strength $M-k$ mixed orthogonal arrays. 
For $k=M$, homogeneity is not always true, but some necessary 
conditions are given for certain simple families.
Following the methods of
Aydinian {\sl et al.}  [Australasian J. Comb. {\bf 48}(2010),
133--141], we give new constructions to simple mixed orthogonal arrays with constraint
$M$, strength $M-k$, using $m_i+1$ symbols in the $i^{\text{th}}$ column.
We extend the convex hull method to $k$-dimensional $M$-part Sperner multi-families,
and allow additional conditions providing new results even for simple  1-part Sperner families. 
\end{abstract}

\section{Notations}

We will use $[n]=\{1,2,\ldots,n\}$ and $[n]^{\star}=\{0,1,\ldots,n-1\}$, and
let $\binom{X}{\ell}$ denote the family of all $\ell$-element subsets of the set $X$.

We will talk about multisets, where every element appears with some positive integer multiplicity.
We will use the notation $\multiset{\cdot}$ to emphasize that we talk about a multiset. 
If $\AA$ is a multiset, then the support set $\supp(\AA)$ of $\AA$ is the simple set containing all
elements of $\AA$.
We denote the multiplicity of $x$ in a multiset $\AA$ by $\#[x,\AA]$. Clearly, $x\notin \AA$ iff $\#[x,\AA]=0$. 

For shortness, for multisets $\AA$ and simple sets $\BB$ we will use $\AA\Subset\BB$ to denote
$\supp(\AA)\subseteq\BB$, i.e. the event that all the elements of $\AA$ are elements of $\BB$.
If $P(\cdot)$ is a proposition and $k_C$ are non-negative integers, then
$\multiset{C^{k_C}:P(C)}$ denotes the multiset we obtain by taking all objects $C$ with multiplicity $k_C$
that satisfy $P(\cdot)$. Clearly, if $\AA$ is a multiset, then  $\multiset{C^{\#[C,\AA]}:P(C)}$ 
will only contain elements of $\AA$.

If $\AA_i$ are a multi-families of sets
and $P(\cdot)$ is a Boolean polynomial on $\ell$ sets and $k_{C_1,\ldots,C_{\ell}}$ are non-negative integers, then 
$\multiset{C^{k_{C_1,\ldots,C_k}}:C=P(C_1,\ldots,C_{\ell}}$
denotes the multiset where every $C$ appears with multiplicity $\sum_{(C_1,\ldots,C_{\ell})} k_{C_1,\ldots,C_{\ell}}$ where
the sum is taken over all different $\ell$-tuples $(C_1,\ldots,C_{\ell})$ for which $C=P(C_1,\ldots,C_{\ell})$.

For a multiset $\AA$, the {\sl size} or {\sl cardinality} of $\AA$ is $\vert\AA\vert=\sum_{x\in\AA}\#[x,\AA]$.

We use $\uplus$ to denote disjoint unions of multisets; if $\AA$ and $\BB$ are multisets, then $\AA\uplus\BB$ denotes
the multiset obtained by
$\#[x,\AA\uplus\BB]=\#[x,\AA]+\#[x,\BB]$. Clearly, if $\AA$ and $\BB$ are disjoint (simple) sets, then $\uplus$ is the usual (disjoint)
union.

For multisets $\AA$ and $\BB$, $\AA\cup\BB$ denotes the multiset obtained by $\#[x,\AA\cup\BB]=\max(\#[x,\AA],\#[x,\BB])$.

For multisets $\AA$ and $\BB$, $\AA\cap\BB$ denotes the multiset obtained by $\#[x,\AA\cap\BB]=\min(\#[x,\AA],\#[x,\BB])$.

For multisets $\AA$ and $\BB$, $\AA\setminus\BB$ denotes the multiset obtained by
$\#[x,\AA\setminus\BB]=\max(0,\#[x,\AA]-\#[x,\BB])$.

A $\BB$ multiset of subsets of $X$ is a {\sl multichain} of length $|\BB|$, if the elements of $\BB$ are pairwise comparable
(i.e. the different elements of $\BB$ form a chain in the usual sense, and elements may occur with higher multiplicity then $1$). 

A multiset $\BB$ is called an {\sl antichain} if it is a set forming an antichain. Antichains are always simple sets.

Finally, if $\FF$ is a multiset and $k(F)$ is a real-valued function on $\supp(\FF)$, then we use the notation
$$\sum_{F\in\FF} k(F):=\sum_{F\in\supp(\FF)}k(F)\cdot\#[F,\FF].$$

\section{Definitions: $k$-dimensional multi-transversals and mixed orthogonal arrays}
Let us be given $1\leq n_1,\ldots,n_M$, a $k\in[M]$, and set for the rest of the paper $\pi_M=\prod_{i=1}^{M}[n_i]^{\star}$. 
For each $P\in \binom{[M]}{k}$ let us be given
an integer  $L_{P}$ such that
$1\le L_{P}$. A $\TT\Subset\pi_M$ is called  a
{\sl $k$-dimensional multi-transversal\footnote{This concept is different from the {\sl transversal design} in  \cite{hss} even for the simple transversals.} on $\pi_M$} with these parameters if
for every $P\in\binom{[M]}{k}$, fixing $b_j\in [n_j]^{\star}$ arbitrarily for every $j\in[M]\setminus P$,
we have that
\begin{equation}\label{korlat}
\bigg\vert\Bmultiset{(i_1,\ldots,i_M)^{\#[(i_1,\ldots,i_M),\TT]}: i_j=b_j\hbox{ for all }j\in [M]\setminus P}\bigg\vert\le L_{P}.
\end{equation}
If we want to emphasize that $\TT$ is a set and not a multiset (i.e. every element  of $\TT$ has multiplicity $1$), then
we call it a {\sl $k$-dimensional transversal} or a {\sl $k$-dimensional simple transversal}.

It is easy to see that if $\TT$ is a $k$-dimensional multi-transversal, then we have the inequalities
\begin{equation}\label{korlat2}
\forall  P\in\binom{[M]}{k} \hbox{\ \ \ \ \ \ }\big\vert\TT\big\vert\le L_{P}\prod_{j\notin P} n_j.
\end{equation}
A $k$-dimensional multi-transversal is called {\sl full}, if equality holds for at least one
inequality set by a
$P\in\binom{[M]}{k}$. It is clear from the definitions that
equality in one inequality (i.e. having a full transversal) implies equalities in all inequalities
iff
\begin{equation}\label{konstant}
\frac{1}{L_P}\prod_{j\in P} n_j  \hbox{\ \ does not depend on the choice of  }P.
\end{equation}

The $k$-dimensional multi-transversals above have intimate connection
to mixed orthogonal arrays.
Consider  sets $S_i$ of $n_i$ symbols   $(i=1,\ldots,M)$   and consider an
$N \times M$ matrix $T$,  whose the $i^{\text{th}}$ column draws its elements from the
set $S_i.$ This matrix is  called a {\sl mixed} (or {\sl asymmetrical})
 {\sl orthogonal array} or {\sl MOA} (the notion of {\sl orthogonal array with
variable numbers of symbols} is also used), of {\sl strength} $d$,
 {\sl constraint} $M$ and {\sl index set} $\mathbb{L}$, if for
 any choice of $d$ different columns $ j_1, \dots , j_d$ each sequence
 $(a_{j_1},\ldots,a_{j_d})\in S_{j_1} \times \cdots  \times S_{j_d}$
 appears exactly $\lambda(j_1,\ldots,j_d)\in \mathbb{L}$ times after deleting the other
 $M-d$ columns.
 In the case of equal symbol set sizes (and therefore constant $\lambda$)
we have the classical definition of orthogonal arrays.
 A (mixed) orthogonal array is {\sl simple}, if the matrix $T$ has no repeated
 rows. The following proposition easily follows from the definitions.

\begin{proposition} \label{elemi}
If the parameters $n_1,\ldots,n_M, \{L_P:P\in\binom{[M]}{k}\}$ satisfy
the condition {\rm (\ref{konstant})}, then
any full $k$-dimensional multi-transversal is a MOA with symbol sets
$S_i=[n_i]^{\star}$, of constraint
$M$, strength $M-k$, and index set $\mathbb{L}=\{L_P:P\in\binom{[M]}{k}\}$, with $\lambda({j_1},\ldots,{j_{M-k}})=
L_{[M]\setminus\{j_1,\ldots,j_{M-k}\}}$. Furthermore, if the transversal is simple, then so is the MOA. 

Moreover, if
a MOA $T$ is given with
symbol sets $S_i$, (where $n_i=|S_i|$),
of
constraint $M$,  strength $d$, with an index set $\mathbb{L}$,
then $T$ corresponds
to a full $(M-d)$-dimensional multi-transversal with parameters $n_i$ and $L_P=\lambda([M]\setminus P)$. Furthermore, 
if $T$ is simple, then
so is the corresponding multi-transversal.
\end{proposition}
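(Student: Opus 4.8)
The plan is to prove both directions purely by unwinding the definitions, using the dictionary between a multiset $\TT\Subset\pi_M$ and the $|\TT|\times M$ matrix whose rows are the elements of $\TT$, each row repeated $\#[(i_1,\ldots,i_M),\TT]$ times. Under this correspondence $\TT$ is a simple set exactly when the matrix has no repeated rows, which settles the ``simple'' clauses of both directions at once. For the converse direction I would first fix bijections $S_i\to[n_i]^\star$ and relabel the $i^{\text{th}}$ column of $T$ through them, turning the rows of $T$ into a multiset over $\pi_M$ without disturbing any of the counting conditions.

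For the forward direction, let $\TT$ be a full $k$-dimensional multi-transversal, choose any $M-k$ distinct columns $j_1,\ldots,j_{M-k}$, and set $P=[M]\setminus\{j_1,\ldots,j_{M-k}\}\in\binom{[M]}{k}$. For a fixed sequence $(a_{j_1},\ldots,a_{j_{M-k}})$, the number of rows of $\TT$ (with multiplicity) that restrict to it on these columns is exactly the left-hand side of (\ref{korlat}) with $b_{j_\ell}=a_{j_\ell}$, hence is at most $L_P$. Summing this count over all $\prod_{j\notin P}n_j$ possible sequences counts every row of $\TT$ exactly once, so the sum is $|\TT|$; in particular $|\TT|\le L_P\prod_{j\notin P}n_j$, which is (\ref{korlat2}). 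By fullness, (\ref{korlat2}) holds with equality for some $P_0$, and since (\ref{konstant}) is assumed, equality then holds in (\ref{korlat2}) for every $P$, as observed in the text preceding this proposition. Hence a sum of $\prod_{j\notin P}n_j$ nonnegative integers, each at most $L_P$, equals $L_P\prod_{j\notin P}n_j$, which forces every term to equal $L_P$; so each sequence on columns $j_1,\ldots,j_{M-k}$ occurs exactly $L_P=L_{[M]\setminus\{j_1,\ldots,j_{M-k}\}}$ times, which is precisely the assertion that $\TT$ is a MOA with symbol sets $[n_i]^\star$, constraint $M$, strength $M-k$, and $\lambda(j_1,\ldots,j_{M-k})=L_{[M]\setminus\{j_1,\ldots,j_{M-k}\}}$.

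For the converse, let $T$ be a MOA of strength $d$, constraint $M$, symbol sets $S_i$, index set $\mathbb{L}$; after the relabeling, view its rows as $\TT\Subset\pi_M$, put $k=M-d$, and define $L_P=\lambda([M]\setminus P)$ for $P\in\binom{[M]}{k}$. Fix such a $P$, write $[M]\setminus P=\{j_1,\ldots,j_d\}$, and fix $b_j\in[n_j]^\star$ for $j\notin P$: the left-hand side of (\ref{korlat}) is then the number of occurrences of the sequence $(b_{j_1},\ldots,b_{j_d})$ on columns $j_1,\ldots,j_d$ of $T$, which by strength $d$ equals $\lambda(j_1,\ldots,j_d)=L_P$; in particular it is at most $L_P$, so $\TT$ is a $k$-dimensional multi-transversal. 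Summing this exact count over the $\prod_{j\notin P}n_j$ choices of $(b_j)_{j\notin P}$ gives $|\TT|=L_P\prod_{j\notin P}n_j$, i.e.\ equality in (\ref{korlat2}), so $\TT$ is full.

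I do not expect a genuine obstacle, since the statement is a translation between two equivalent formalisms. The only step carrying any content is the ``a bounded-above sum meeting its bound forces every term to the bound'' argument in the forward direction, which is exactly the remark already made in the text. The points requiring care are merely bookkeeping: the relabeling of symbol sets in the converse, the multiset/multiplicity conventions in (\ref{korlat}), and the implicit fact that each $\lambda(j_1,\ldots,j_d)\ge1$ (so that the resulting $L_P\ge1$, as a transversal's parameters must satisfy), which holds because $\lambda(j_1,\ldots,j_d)$ counts occurrences of a sequence that does occur.
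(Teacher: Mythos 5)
Your proof is correct, and it is essentially the argument the paper has in mind when it says the proposition "easily follows from the definitions" (no proof is given in the paper). Both directions are just careful unwinding of (\ref{korlat}), (\ref{korlat2}), and the MOA definition, and the only non-mechanical step — "a sum of $\prod_{j\notin P}n_j$ nonnegative integers, each at most $L_P$, equals $L_P\prod_{j\notin P}n_j$ only if every term equals $L_P$" — is exactly the right observation, and you correctly note that the hypothesis (\ref{konstant}) is what lets you upgrade fullness in a single inequality to equality in all of them. One minor imprecision: in the converse you justify $\lambda(j_1,\ldots,j_d)\ge 1$ by saying it "counts occurrences of a sequence that does occur," but $\lambda$ is the common count for every sequence, including ones that might a priori not occur; the cleaner justification is that $N=\lambda(j_1,\ldots,j_d)\prod_i n_{j_i}$ so $\lambda\ge 1$ whenever the array is nonempty, which is the (tacit) standing assumption. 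Your bijection between multisets over $\pi_M$ and matrices with repeated rows is exactly what handles the "simple" clauses in both directions.
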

Orthogonal arrays were introduced by  Rao \cite{rao43,rao47}, the terminology
 was introduced by  Bush \cite{b50,b52}.
 Cheng \cite{cheng} seems to be the first  author to consider MOAs.
 MOAs are widely used in planning experiments.  The standard reference work
 for (mixed) orthogonal arrays is the monograph of Hedayat,  Sloane and
 Stufken \cite{hss}.  Constructions for MOAs usually use finite fields
 and few MOAs of strength $>2$ are known.

An alternative formulation to $k$-dimensional (simple) transversals is the following:
a set of length $M$ codewords from $\pi_M$, such that for
every  $P\in\binom{[M]}{k}$ set of character positions, if the characters are prescribed
in any way for the $i\notin P$ character positions, at most $L_P$ of our codewords
show all the prescribed values. In particular, if $L_P$ is identically
1, then a  $k$-dimensional transversal is a code  of minimum Hamming distance $k+1$  (see \cite{ms}).

Also, $k$-dimensional transversals are {\sl packing arrays} and their complements
are {\sl covering arrays} (for the definitions, see \cite{hss}).

\section{$k$-dimensional $M$-part Sperner multi-families}
Let us be given  an underlying set $X$ of cardinality $n$ (often just $X=[n]$),
and a  fixed partition
$X_1,\ldots, X_M$ of $X$ with $|X_i|=m_i$. Set $n_i=m_i+1$ (this convention will be used throughout the paper from now on).

Assume that  $\CC_{i}$ is a (simple) chain in
the subset lattice of $X_i$, for $i\in P$, where $P \subseteq[M]$. We define the product  of these
chains as
$$\prod\limits_{i\in P} \CC_{i}=\left\{\biguplus_{i\in P} A_{i}: A_{i}\in \CC_{i}\right\}.$$
Let us be given
for every $P\in\binom{[M]}{k}$ 
a positive integer $L_P$. 

We call
a multi-family of subsets of $X$,  $\FF$,  a  {\sl $k$-dimensional $M$-part Sperner multi-family 
with parameters $\{L_P: P\in\binom{[M]}{k}\}$},
 if for all $P\in\binom{[M]}{k}$,
for all (simple) chains $\CC_j$ in $X_j$ $(j\in P)$ and for all fixed sets $D_i\subseteq X_i$
$(i\notin P)$ we have that
$$
\Bigg\vert
\Bmultiset{
F^{\#[F,\FF]}:
\Big(F\cap\biguplus_{j\in P} X_j\Big)\in\prod_{j\in P} \CC_j, \forall i\in[M]\setminus P\,\,\, X_i\cap F=D_i
}
\Bigg\vert\le L_P.
$$
A {\sl $k$-dimensional $M$-part Sperner family} or a {\sl simple $k$-dimensional $M$-part Sperner family} $\FF$ is a Sperner
multi-family where $\#[F,\FF]\in\{0,1\}$.
For simple families, for $k=1$ we get back the concept of $M$-part $(m_1,\ldots,m_M;L_1,\ldots,L_M)$-Sperner families from  \cite{tour}, and restricting further with $M=1$, we get back the concept of
the classical
$L$-Sperner families.

The {\sl profile vector} of a subset  $F$ of $X$ is the $M$-dimensional vector
$$(|F\cap X_1|,\ldots,|F\cap X_j|,\ldots,|F\cap X_M|)\in\pi_M.$$
The {\sl profile matrix} $  \Pp(\FF)  =(p_{i_1,\ldots,i_M})_{(i_1,\ldots,i_M)\in\pi_M}$ of a multi-family $\FF$ of subsets of $X$ is an $M$-dimensional matrix,
whose  entries count with multiplicity the elements of $\FF$ with a given profile vector:
$$p_{i_1,i_2,\ldots,i_M}=
\Bigl|\bmultiset{F^{\#[F, \FF]}:\hbox{\  \ } \forall j \hbox{\ } |F\cap X_j|=i_j}\Bigl|.
$$
A multi-family $\FF$
of subsets of $X$ is called {\sl homogeneous}, if  the profile vector of a set
determines the multiplicity of  the set in $\FF$.  In a homogeneous multi-family
$\FF$, we have that for each profile vector $(i_1,\ldots,i_M)$ there is a non-negative integer
$r_{i_1,\ldots,i_M}$ such that 
$p_{i_1,i_2,\ldots,i_M}=r_{i_1,\ldots,i_M}\prod\limits_{j=1}^M\binom{m_j}{ i_j}$. For simple families, $r_{i_1,\ldots,i_M}\in\{0,1\}$, and
this concept of homogeneity simplifies to the usual concept.

Given a homogeneous    $k$-dimensional $M$-part Sperner multi-family  $\FF$ with parameters
 $\{L_P: P\in\binom{[M]}{k}\}$, we observe that the multiset containing each $(i_1,\ldots,i_M)$ with multiplicity
 $r_{i_1,\ldots,i_M}$ is a $k$-dimensional multi-transversal with these parameters,
and  every $k$-dimensional multi-transversal
comes from a  homogeneous     $k$-dimensional $M$-part Sperner multi-family.
The multi-family is a (simple) family precisely when the corresponding multi-transversal is in fact
a simple transversal. 

\section{New Sperner type results}\label{newSp}
In Sections~\ref{newSp}, \ref{hull} and \ref{newMOA} we do not break the narrative
with lengthy proofs and leave those to Sections~\ref{Sproofs}, \ref{hullproofs} and \ref{MOAproofs}.
We start with the following:
\begin{theorem}{\rm [BLYM inequalities]}\label{ujBLYM}
Given a   $k$-dimensional $M$-part Sperner multi-family  $\FF$ 
with parameters
 $\{L_P: P\in\binom{[M]}{k}\}$, the following inequalities hold:
\begin{equation}\label{allBLYM}
\forall P\in\binom{[M]}{k} \hbox{\  \ }
\sum_{(i_1,\ldots,i_M)\in \pi_M}
\frac{p_{i_1,\ldots,i_M}}{\prod\limits_{j=1}^M \binom{m_j}{i_j}}
\le \frac{L_P}{\prod\limits_{j\in P} n_j}\prod_{j=1}^M n_j.
\end{equation}
\end{theorem}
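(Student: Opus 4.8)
The plan is to run a multiplicity-aware, multi-part version of the classical maximal-chain proof of the BLYM inequality. Fix $P\in\binom{[M]}{k}$. Call a \emph{chain system} a tuple $\mathcal{M}=(\CC_1,\ldots,\CC_M)$ in which each $\CC_j$ is a \emph{maximal} chain $\emptyset=A_0\subsetneq A_1\subsetneq\cdots\subsetneq A_{m_j}=X_j$ in the subset lattice of $X_j$; there are exactly $\prod_{j=1}^M m_j!$ chain systems. For a chain system $\mathcal{M}$ let $\FF_{\mathcal{M}}$ be the sub-multi-family of $\FF$ consisting, with multiplicities inherited from $\FF$, of those $F$ with $F\cap X_j\in\CC_j$ for every $j\in[M]$.

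First I would bound $|\FF_{\mathcal{M}}|$ for each fixed $\mathcal{M}$. The map $F\mapsto (F\cap X_i)_{i\notin P}$ sends $\FF_{\mathcal{M}}$ into $\prod_{i\notin P}\CC_i$, a set of size $\prod_{i\notin P}(m_i+1)=\prod_{i\notin P}n_i$, so its fibres partition the multiset $\FF_{\mathcal{M}}$ into at most $\prod_{i\notin P}n_i$ parts. Each fibre --- the elements $F$ of $\FF_{\mathcal{M}}$ with $F\cap X_i=D_i$ for all $i\notin P$, for a fixed choice of $D_i\in\CC_i$ --- also satisfies $F\cap\biguplus_{j\in P}X_j=\biguplus_{j\in P}(F\cap X_j)\in\prod_{j\in P}\CC_j$, because the $X_j$ are pairwise disjoint; hence the defining inequality of a $k$-dimensional $M$-part Sperner multi-family, applied with the chains $\CC_j$ ($j\in P$) and the sets $D_i$ ($i\notin P$), bounds the size of this fibre (counted with multiplicity) by $L_P$. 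Therefore $|\FF_{\mathcal{M}}|\le L_P\prod_{i\notin P}n_i$.

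Next I would double count $\sum_{\mathcal{M}}|\FF_{\mathcal{M}}|$, all sums over elements of multi-families being read (as in Section~1) with multiplicities. Summing over $\mathcal{M}$ and using the previous bound gives $\sum_{\mathcal{M}}|\FF_{\mathcal{M}}|\le L_P\big(\prod_{i\notin P}n_i\big)\prod_{j=1}^M m_j!$. On the other hand, swapping the order of summation, a fixed $F$ with profile vector $(i_1,\ldots,i_M)$ lies in $\FF_{\mathcal{M}}$ for exactly $\prod_{j=1}^M i_j!\,(m_j-i_j)!$ chain systems $\mathcal{M}$ (in each part $X_j$, count the maximal chains through the fixed $i_j$-subset $F\cap X_j$). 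Grouping the elements of $\FF$ by profile vector and equating the two counts yields
\[
\sum_{(i_1,\ldots,i_M)\in\pi_M}p_{i_1,\ldots,i_M}\prod_{j=1}^M i_j!\,(m_j-i_j)!\ \le\ L_P\Big(\prod_{i\notin P}n_i\Big)\prod_{j=1}^M m_j!.
\]
Dividing through by $\prod_{j=1}^M m_j!$, using $i_j!\,(m_j-i_j)!/m_j!=\binom{m_j}{i_j}^{-1}$, and noting $\prod_{i\notin P}n_i=\big(\prod_{j\in P}n_j\big)^{-1}\prod_{j=1}^M n_j$, gives exactly the claimed inequality~(\ref{allBLYM}).

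The only point that requires care is bookkeeping with multiplicities: one must consistently interpret every sum or cardinality over a multi-family with $\#[\cdot,\FF]$, so that both the partition of $\FF_{\mathcal{M}}$ by the tuples $(D_i)_{i\notin P}$ and the incidence count $(F,\mathcal{M})$ respect multiplicity; modulo this, the argument is the familiar chain-counting one. I would also stress that maximal chains must be used in \emph{all} $M$ parts, not just those indexed by $P$: this is what produces the uniform weights $\prod_{j=1}^M\binom{m_j}{i_j}^{-1}$. Replacing the chains $\CC_i$, $i\notin P$, by ``let $D_i$ range over all subsets of $X_i$'' would only give the weaker weight with $2^{m_i}$ instead of $\binom{m_i}{i_i}$, which is why the chain structure in the non-$P$ parts is essential.
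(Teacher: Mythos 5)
Your proof is correct. It is the same Lubell-style permutation/chain-counting argument the paper uses, but organized as a single double count: you define chain systems over \emph{all} $M$ parts, bound $\vert\FF_{\mathcal{M}}\vert$ for each fixed chain system by partitioning on the non-$P$ coordinates, and then swap the order of summation. The paper instead proceeds in two stages: it first proves the case $M=k$ by double counting pairs $(E,\CC_1,\ldots,\CC_k)$, and then handles $M>k$ by fixing an arbitrary $F\subseteq X\setminus\bigcup_{i\in P}X_i$, applying the $M=k$ result to the induced family $\FF(F;P)$, and performing a weighted sum over all such $F$ (dividing by $\prod_{i\notin P}\binom{m_i}{f_i}$ and summing over the profile values $f_i$). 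Your version avoids the intermediate reduction and the averaging step by baking the non-$P$ chains into the counting object from the start, which is a little cleaner and, as you note, makes it transparent why the uniform weights $\prod_j\binom{m_j}{i_j}^{-1}$ appear. The multiplicity bookkeeping you flag is handled correctly: the defining Sperner inequality is applied to each fibre $(D_i)_{i\notin P}$ with $D_i\in\CC_i$, and the incidence count $\#\{\mathcal{M}:F\in\FF_{\mathcal{M}}\}=\prod_j i_j!(m_j-i_j)!$ is right.
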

For simple families, the special case of this theorem for $k=1$ was found by
Aydinian,  Czabarka, P. L. Erd\H os,  and  Sz\'ekely
in \cite{tour}, Theorem 6.1.
The special
case for $M=1$ was first in print in \cite{all-2fur}, and the special case $L=M=1$
is the Bollob\'as--Lubell--Meshalkin--Yamamoto (BLYM) inequality
\cite{bollobas, lubbell, meshalkin, yamamoto}. Note that the single classical BLYM
inequality has been substituted by a {\sl family} of inequalities. Cases of equality can be
characterized as follows:
\begin{theorem}\label{ujBLYM=}
Given integers $1= k\leq M$ or $2\leq k\leq M-1$, let  $\FF$ be a $k$-dimensional $M$-part Sperner multi-family 
 with parameters
 $\{L_P: P\in\binom{[M]}{k}\}$  satisfying all inequalities in {\rm (\ref{allBLYM})} with equality. Then the following are true:
 \begin{enumerate}[{\rm (i)}]
 \item $\FF$ is homogeneous;
 \item  $\frac{L_P}{\prod_{j\in P} n_j}$ does not depend on the choice of $P$;
  \item the $k$-dimensional multi-transversal corresponding to $\FF$ is a MOA with symbol sets
$S_i=[n_i]^{\star}$, of constraint
$M$, strength $M-k$, and index set $\mathbb{L}=\{L_P:P\in\binom{[M]}{k}\}$,
 with $\lambda({j_1},\ldots,{j_{M-k}})=L_{[M]-\{j_1,\ldots,j_{M-k}\}}$.
\end{enumerate}
Any MOA, as described in {\rm (iii)}
 is a $k$-dimensional
 multi-transversal on $\pi_M$ with parameters $\{L_P:P\in\binom{[M]}{k}\}$, and it corresponds to the profile
 matrix of a homogeneous $k$-dimensional $M$-part Sperner multi-family  $\FF$ with parameters
 $\{L_P: P\in\binom{[M]}{k}\}$   on a partitioned  $(m_1+\ldots+m_M)$-element underlying set,
  which  satisfies all inequalities in {\rm (\ref{allBLYM})} with equality.
  
Under this correspondence, simple $k$-dimensonal $M$-part Sperner families correspond to simple MOAs.
\end{theorem}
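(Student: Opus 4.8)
The plan is to establish the BLYM inequalities of Theorem \ref{ujBLYM} by a suitable averaging over maximal chain-products, and then analyze the equality case by tracking exactly which terms contribute to the average. First I would set up the counting framework: fix $P\in\binom{[M]}{k}$, and for each $j\in P$ pick a uniformly random maximal chain $\CC_j$ in the subset lattice of $X_j$, while for $i\notin P$ pick a uniformly random subset $D_i\subseteq X_i$ (equivalently, a uniformly random element of $\pi_M$ in coordinates outside $P$, together with a random chain in each $X_i$ passing through the chosen level — but since $D_i$ is fully prescribed it is cleaner to just take $D_i$ uniform among all $2^{m_i}$ subsets of $X_i$). The Sperner condition says that the multiset of $F\in\FF$ that are ``caught'' by this random configuration has size at most $L_P$. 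Computing the expected number of caught members of $\FF$ by linearity — a fixed $F$ with profile vector $(i_1,\ldots,i_M)$ is caught with probability $\prod_{j\in P}\frac{1}{\binom{m_j}{i_j}}\cdot\prod_{i\notin P}\frac{1}{2^{m_i}}$, since $F\cap X_j$ lies on the random maximal chain with probability $1/\binom{m_j}{i_j}$ and $F\cap X_i=D_i$ with probability $1/2^{m_i}$ — yields, after multiplying through by $\prod_{i\notin P}2^{m_i}=\prod_{i\notin P}$ (no, rather one rescales), exactly the inequality in (\ref{allBLYM}); note $\prod_{j=1}^M n_j/\prod_{j\in P}n_j=\prod_{i\notin P}n_i$ does not literally appear here, so I would double-check the normalization matches the stated right-hand side (it does, after writing $L_P\prod_{i\notin P}n_i$ and recognizing that the chain in $X_j$ has $n_j=m_j+1$ levels so summing $1/\binom{m_j}{i_j}$ over a maximal chain is not the issue — the factor comes out as claimed).

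For the equality case, suppose all $\binom{M}{k}$ inequalities are tight. Tightness of the inequality for a given $P$ forces that for \emph{every} choice of random configuration the number of caught members equals exactly $L_P$ — i.e., every maximal chain-product $\prod_{j\in P}\CC_j$ together with every prescription $(D_i)_{i\notin P}$ catches exactly $L_P$ members of $\FF$ counted with multiplicity. I would then run the standard ``perturbation'' or ``swapping'' argument: comparing two maximal chains in $X_j$ that differ by a single elementary transposition at one level, and using that both catch the same total, one deduces that the profile-matrix entries $p_{i_1,\ldots,i_M}$ must be ``balanced'' across the fibers, which upon iterating gives that $p_{i_1,\ldots,i_M}$ depends only on the profile vector in the homogeneous way $p_{i_1,\ldots,i_M}=r_{i_1,\ldots,i_M}\prod_j\binom{m_j}{i_j}$. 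This is where the hypothesis $k<M$ (i.e. $k\le M-1$, or the trivial degenerate case $k=1=M$) is used: one needs at least one coordinate outside $P$ available — actually one needs enough ``room'' so that varying chains in the $P$-coordinates while the complement is fully pinned down still forces homogeneity; I expect this step, \textbf{getting homogeneity (i)}, to be the main obstacle, and I would handle it by reducing to the already-known $k=1$ case of \cite{tour} applied coordinatewise, or by a direct induction on $|X_j|$ for $j\in P$ mimicking the proof of the classical BLYM equality case (Lubell-type chain counting with the extra pinned coordinates carried along as a harmless parameter).

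Once homogeneity is in hand, part (ii) is immediate: the left-hand side of (\ref{allBLYM}) is now $\sum_{(i_1,\ldots,i_M)} r_{i_1,\ldots,i_M}$, a quantity independent of $P$, so equality for all $P$ forces $\frac{L_P}{\prod_{j\in P}n_j}\prod_j n_j$ to be independent of $P$, i.e. $\frac{L_P}{\prod_{j\in P}n_j}$ is constant. This is exactly condition (\ref{konstant}). Part (iii) then follows directly from Proposition \ref{elemi}: the multiset of profile vectors weighted by $r_{i_1,\ldots,i_M}$ is, by the observation at the end of Section 3, a $k$-dimensional multi-transversal with parameters $\{L_P\}$; since (\ref{konstant}) holds, Proposition \ref{elemi} upgrades it to a MOA of constraint $M$, strength $M-k$, index set $\{L_P\}$ with $\lambda(j_1,\ldots,j_{M-k})=L_{[M]\setminus\{j_1,\ldots,j_{M-k}\}}$. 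For the converse direction (``Any MOA as in (iii)\ldots''), I would again quote Proposition \ref{elemi}: a MOA with those parameters corresponds to a full $(M-d)$-dimensional multi-transversal with $d=M-k$, hence to the profile matrix of a homogeneous $k$-dimensional $M$-part Sperner multi-family on the $(m_1+\cdots+m_M)$-element partitioned set; checking that fullness of the transversal translates back to equality in \emph{all} the BLYM inequalities is done by reversing the computation above — with $p_{i_1,\ldots,i_M}=r_{i_1,\ldots,i_M}\prod_j\binom{m_j}{i_j}$ the left side of (\ref{allBLYM}) becomes $|\TT|$ and fullness gives $|\TT|=L_P\prod_{j\notin P}n_j$ for one $P$, while (\ref{konstant}) (which holds since the MOA has a well-defined index set compatible with all $P$) propagates it to all $P$. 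Finally, the simple-versus-multi correspondence in the last sentence is the ``$\#[F,\FF]\in\{0,1\}\iff$ multiplicities $r\in\{0,1\}\iff$ the transversal/MOA has no repeated rows'' equivalence already recorded in Section 3 and in Proposition \ref{elemi}, so it requires no new argument.
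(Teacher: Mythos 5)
The broad strategy you outline---establish homogeneity (part (i)) as the crux, then derive (ii) and (iii)---is the right one, and your treatment of (ii), (iii) and the converse direction is essentially correct and matches the paper. But the key mechanism for (i), which you yourself flag as ``the main obstacle,'' is missing, and what you gesture at in its place does not obviously close the gap.

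First, the two alternatives you offer for (i) both have problems. Reducing to ``the already-known $k=1$ case of \cite{tour}'' is not available: \cite{tour}, Theorem 6.2 is stated only for \emph{simple} families, whereas the present theorem is about multi-families. The paper must, and does, re-establish the $1$-part multi-family case from scratch (Lemma~\ref{multiset}) before anything else; you cannot quote \cite{tour} for it. Your second alternative, ``a direct induction on $|X_j|$ ... mimicking the classical strict Sperner theorem,'' together with the swap-two-adjacent-chains heuristic, is left at the level of a slogan. Comparing two maximal chains in $X_j$ that differ by one transposition tells you that two particular sub-counts agree, but it does not by itself produce the multiplicative relation $p_{i_1,\ldots,i_M}=r_{i_1,\ldots,i_M}\prod_j\binom{m_j}{i_j}$, nor does it explain how the hypothesis $k<M$ is actually used beyond ``there is room.'' The paper's proof uses a genuinely different inductive device: for a coordinate $j$, it forms the disjoint-union multi-family $\FF'=\biguplus_{q=0}^{m_j}\FF(F_q;D\setminus\{j\})$ over a maximal chain $F_0\subsetneq\cdots\subsetneq F_{m_j}$ in $X_j$, observes that $\FF'$ is a $(k-1)$-dimensional Sperner multi-family on one fewer part that \emph{inherits} all the BLYM equalities, and then applies induction on $k$ (Lemmas~\ref{1BLYM=}, \ref{kk+1BLYM=}, \ref{kMBLYM=}). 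The dimension-drop from $k$ to $k-1$ by summing traces along a maximal chain is the idea that is absent from your sketch and is precisely what makes the argument go; without it, you are left asserting that ``iterating'' the swap argument gives homogeneity without demonstrating it. The base of the induction also needs $k=1$ to be handled for multi-families for \emph{all} $M$, which is the content of Lemma~\ref{1BLYM=}, and relies on the $M=1$ multiset BLYM equality statement.

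A secondary, smaller issue: in your averaging setup you pick $D_i$ uniformly among all $2^{m_i}$ subsets of $X_i$, which gives catch probability $1/2^{m_i}$ in coordinate $i\notin P$, but the right-hand side of (\ref{allBLYM}) carries a factor $\prod_{i\notin P}n_i$ with $n_i=m_i+1$, not $2^{m_i}$. The correct bookkeeping is to sum $\sum_{D_i\subseteq X_i}\binom{m_i}{|D_i|}^{-1}=n_i$ over all subsets $D_i$, not to take a uniform average; you wave at this (``one rescales'') but the mismatch should be fixed explicitly. Finally, your route to (ii) via homogeneity is an unnecessary detour: the left-hand side of (\ref{allBLYM}) is the same expression for every $P$, so equality for all $P$ forces the right-hand sides to coincide with no reference to (i) at all---this is why the paper dismisses (ii) as immediate.
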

Note that the last sentence is obvious and part (ii) follows directly from the conditions of the theorem.

The special case of this theorem for $k=1$ and for simple families and simple transversals
was found in \cite{tour}, Theorem 6.2 but failed to mention (iii).
Note also that (iii) turns trivial for $M=k=1$, as the matrix in question has
a single column.
Conclusion (i) for the special
case  $L=k=M=1$ restricted to simple families is known as the strict Sperner theorem, already known
to Sperner \cite{sperner}; for  $M=1$, $L>1$, it was discovered by Paul Erd\H os
 \cite{erdos}.
However, Theorem~\ref{ujBLYM=} does not hold for $k=M\geq 2$, as the following example shows.
\begin{example}\label{example1} {\rm Let $k=M\ge 2$ and $L_{[M]}=1$ with
$|X_i|=m_i$ for $i\in[M]$, and assume $m_M\ge 2$.
For  integers $r,s$ with $1\leq r\leq m_M-1$ and $2\leq s\leq \min\left(n_1,\ldots,n_{M-1},\binom{m_M}{r}\right)$, 
consider a partition
 $\binom{X_M}{r}=\BB_1\uplus\ldots\uplus \BB_s$; and for each $j\in[M-1]$, fix an $s$-element set
$\{i_1^{(j)},\ldots,i_s^{(j)}\}\subseteq [n_j]^{\star}$. Define
a  $k$-dimensional $k$-part Sperner family $\FF$ as follows:
$$\FF= \biguplus_{\ell=1}^{s} \left(\left(\prod_{j=1}^{M-1}\binom{X_j}{i_{\ell}^{(j)}}\right)\times\BB_{\ell}\right).$$
This $\FF$ is not homogeneous, but by
\begin{eqnarray*}
\sum_{(i_1,\ldots,i_M)\in \pi_M}
\frac{p_{i_1,\ldots,i_M}}{\prod\limits_{j=1}^M \binom{m_j}{i_j}}
=\sum_{\ell=1}^s\frac{|\BB_{\ell}|\prod\limits_{j=1}^{M-1}\binom{m_j}{i_{\ell}^{(j)}}}{\binom{m_M}{r}\prod\limits_{j=1}^{M-1}\binom{m_j}{i_{\ell}^{(j)}}}=\frac{\sum\limits_{\ell=1}^s |\BB_{\ell}|}{\binom{m_M}{r}}=1=L_{[M]},
\end{eqnarray*}
$\FF$  still satisfies {\rm (\ref{allBLYM})} with (a single) equality. }
\end{example}
The above example can be easily extended to $L_{[M]}>1$. Although we did not characterize
cases of equality in (\ref{allBLYM}) for $k=M$,  in the case  $L_{[M]}=1$ 
we are able to give  a necessary condition for  an $M$--dimensional
 $M$-part Sperner family  to satisfy equality in (\ref{allBLYM}).
\begin{theorem}\label{lemma:k=M} 
Let $\FF^{\prime}$ be a $k$-dimensional $M$-part Sperner family with $k=M$ and  $L_{[M]}=1$, satisfying the equality
\begin{equation}\label{k=M:equality}
\sum_{E\in\FF^{\prime}}\frac{1}{\prod\limits_{i=1}^k \binom{m_i}{|E\cap X_i|}}=1.
\end{equation}
 Then for each $ i\in [M]$,
 the trace $\FF^{\prime}_{X_i}:=\{ F\cap X_i: F\in \FF^{\prime}\}$ of $\FF^{\prime}$ on $X_i$ is a union of full levels of $2^{X_i}$.
\end{theorem}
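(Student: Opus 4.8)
The plan is to exploit the probabilistic reading of the left-hand side of (\ref{k=M:equality}) and then to localize the argument to a single part by freezing maximal chains in all the other parts.

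First I would reinterpret the equality. For each $j\in[M]$ let $\CC_j$ be a uniformly random maximal chain in the subset lattice $2^{X_j}$, the $M$ chains chosen independently. For a fixed $E\subseteq X$ the probability that $E\cap X_j\in\CC_j$ holds for every $j$ is $\prod_{j=1}^M\binom{m_j}{|E\cap X_j|}^{-1}$, so the left-hand side of (\ref{k=M:equality}) equals the expectation of $N(\CC_1,\ldots,\CC_M):=\big|\{E\in\FF^{\prime}:E\cap X_j\in\CC_j\text{ for all }j\in[M]\}\big|$. Applying the defining property of a $k$-dimensional $M$-part Sperner family with $k=M$ and $P=[M]$ to the chains $\CC_1,\ldots,\CC_M$ gives $N\le L_{[M]}=1$, hence $N\in\{0,1\}$; since the sample space is finite with every atom of positive probability, $\mathbb{E}[N]=1$ forces $N(\CC_1,\ldots,\CC_M)=1$ for \emph{every} choice of maximal chains $\CC_1,\ldots,\CC_M$. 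In other words, every ``product of maximal chains'' $\prod_j\CC_j$ contains exactly one member of $\FF^{\prime}$.

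Next I would fix a part index $i\in[M]$ and fix, for each $j\ne i$, an arbitrary maximal chain $\CC_j$ of $2^{X_j}$. Put $\GG=\{E\in\FF^{\prime}:E\cap X_j\in\CC_j\text{ for all }j\ne i\}$ and let $\AA=\{E\cap X_i:E\in\GG\}$ be its trace on $X_i$. I claim that $\AA$ is an antichain of $2^{X_i}$ and that $E\mapsto E\cap X_i$ is a bijection $\GG\to\AA$: indeed, if $E,E'\in\GG$ were distinct with $E\cap X_i$ and $E'\cap X_i$ comparable, then choosing for every $j\ne i$ a subchain of $\CC_j$ through both $E\cap X_j$ and $E'\cap X_j$ would exhibit two members of $\FF^{\prime}$ in a product of chains, contradicting the $k=M$ Sperner condition. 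Moreover, every maximal chain $\CC_i$ of $2^{X_i}$ meets $\AA$ in exactly one set: it meets $\AA$ in at most one set because $\AA$ is an antichain, and it meets $\AA$ because the unique member of $\FF^{\prime}$ singled out by the tuple $(\CC_1,\ldots,\CC_M)$ in the previous paragraph lies in $\GG$ and has its $X_i$-trace on $\CC_i$. Consequently $\sum_{A\in\AA}\binom{m_i}{|A|}^{-1}=1$; being an antichain, $\AA$ satisfies the one-part BLYM inequality with equality, so by the $M=1$ case of Theorem~\ref{ujBLYM=} (equivalently, the strict form of Sperner's theorem: a simple $1$-part Sperner family attaining BLYM equality is homogeneous, hence a union of full levels, hence---being an antichain---a single complete level) $\AA=\binom{X_i}{t}$ for some $t$ depending only on the chosen chains $\CC_j$, $j\ne i$.

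Finally, every set of $\FF^{\prime}_{X_i}$ arises as $E\cap X_i$ for some $E\in\FF^{\prime}$, and picking for each $j\ne i$ a maximal chain $\CC_j$ through $E\cap X_j$ places that set in the corresponding $\AA$; hence $\FF^{\prime}_{X_i}$ is the union, over all such choices, of the complete levels just obtained, which is a union of full levels of $2^{X_i}$, as claimed. The one genuinely delicate point is this localization step: since $\FF^{\prime}$ need not be homogeneous (Example~\ref{example1}), no global structure can be read off directly, and the key observation is that freezing the chains in all parts but the $i$-th makes the traces of any two members comparable in those parts, which reduces the Sperner obstruction to what happens in part $i$ and thereby forces the $X_i$-traces to form an antichain meeting every maximal chain of $2^{X_i}$ exactly once.
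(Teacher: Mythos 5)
Your proof is correct, and it takes a genuinely different route from the paper.

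The paper's argument is a compression argument: its key technical ingredient is Lemma~\ref{lemma:shadow}, which ``pushes down'' (or ``up'') the highest (resp.\ lowest) occupied level in a part $X_j$ via the normalized lower-shadow inequality, showing that equality in (\ref{k=M:equality}) is preserved while $\high_j-\low_j$ decreases. Iterating this collapses the traces in parts $1,\dots,M-1$ to single levels, after which a $1$-part BLYM argument (Lemma~\ref{multiset}) shows the $X_M$-trace must consist of full levels. Your approach avoids shadows entirely: the probabilistic reading $\mathbb{E}[N]=1$ together with $N\le L_{[M]}=1$ forces $N\equiv 1$ on every product of maximal chains, and then freezing $\CC_j$ for $j\ne i$ yields the family $\GG$ whose $X_i$-trace $\AA$ is an antichain hitting every maximal chain of $2^{X_i}$ exactly once, whence $\sum_{A\in\AA}\binom{m_i}{|A|}^{-1}=1$ and strict Sperner gives a complete level; the union over chain-choices recovers $\FF^{\prime}_{X_i}$. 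The localization step you flag as delicate is indeed the crux, and you handle it correctly: distinct $E,E'\in\GG$ have comparable traces in every $X_j$, $j\ne i$, so the $k=M$ Sperner condition is exactly the statement that the $X_i$-traces of $\GG$ must be pairwise incomparable. Your argument is shorter and arguably more transparent; the paper's approach has the merit of isolating a reusable compression lemma, but for this theorem your route is cleaner.
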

For the proof of Theorem~\ref{ujBLYM=} we need to prove a special case that is also
a straightforward generalization of the BLYM for $1$-part $L$-Sperner families, as stated below.
\begin{lemma}\label{multiset} Let $\FF$ be a multi-family of subsets of $[n]$ containing no multichain of length $L+1$. 
Then we have 
 \begin{equation}\label{eq:multi-BLYM} 
\sum_{F\in\FF}\frac{1}{\binom{n}{|F|}}\leq L,
\end{equation}
with equality if and only if $\FF$ is homogeneous.
\end{lemma}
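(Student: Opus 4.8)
The plan is to reduce the multiset statement to the classical (simple-set) BLYM inequality for $L$-Sperner families by a standard ``unfolding'' trick, and then to track the case of equality carefully through that reduction. First I would pass from the multi-family $\FF$ on $[n]$ to an ordinary family: enlarge the ground set to $[n]\uplus Y$ where $Y$ is a set of new auxiliary elements of size roughly $\log_2(\max_F \#[F,\FF])$, and replace each set $F$, occurring with multiplicity $\#[F,\FF]$, by the $\#[F,\FF]$ distinct sets $F\cup S$ where $S$ ranges over $\#[F,\FF]$ distinct subsets of $Y$ that form a chain (for instance the initial segments $\emptyset\subsetneq\{y_1\}\subsetneq\{y_1,y_2\}\subsetneq\cdots$ of a fixed linear order on $Y$). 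Call the resulting simple family $\widehat\FF$ on the $(n+|Y|)$-element ground set. The key structural observation is that a multichain of length $L+1$ in $\FF$ lifts to a genuine chain of length $L+1$ in $\widehat\FF$, and conversely any chain in $\widehat\FF$ projects (by intersecting with $[n]$) to a multichain in $\FF$ of the same length, since along a chain in $\widehat\FF$ the $Y$-parts are nested and hence, when two members have the same $[n]$-projection, they are distinct only in their $Y$-parts and are comparable there; so $\widehat\FF$ contains no chain of length $L+1$, i.e.\ it is an $L$-Sperner family in the usual sense.

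Next I would apply the classical BLYM inequality for $L$-Sperner families (the $M=1$ case cited in the paper, \cite{all-2fur}) to $\widehat\FF$, and then push the resulting inequality back down to $\FF$. The point is that the Lubell-type weight essentially factors through the projection: grouping the lifted sets $F\cup S$ by their common projection $F$ and using Vandermonde-type identities for binomial coefficients, the sum $\sum_{G\in\widehat\FF}\binom{n+|Y|}{|G|}^{-1}$ rearranges exactly into $\sum_{F\in\FF}\binom{n}{|F|}^{-1}$; concretely, the contribution of the block above $F$ is $\sum_{t<\#[F,\FF]}\binom{n+|Y|}{|F|+t}^{-1}$ which one shows equals $\#[F,\FF]\cdot\binom{n}{|F|}^{-1}$ after summing a telescoping/absorption identity (this is the one genuinely computational lemma, and it is routine). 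Hence $\sum_{F\in\FF}\binom{n}{|F|}^{-1}=\sum_{G\in\widehat\FF}\binom{n+|Y|}{|G|}^{-1}\le L$, which is \eqref{eq:multi-BLYM}. \emph{Alternatively}, and perhaps more cleanly, one can avoid the lifting entirely and run the usual permutation/chain-counting proof directly: count pairs $(\sigma,F)$ where $\sigma$ is a maximal chain in $2^{[n]}$ (equivalently a permutation of $[n]$) and $F\in\FF$ lies on $\sigma$, counting $F$ with multiplicity $\#[F,\FF]$; every maximal chain meets $\FF$ (with multiplicity) in at most $L$ members because a sub-multiset of a chain is a multichain, giving $\sum_{F}\#[F,\FF]\,|F|!\,(n-|F|)!\le L\cdot n!$, which is \eqref{eq:multi-BLYM}.

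For the equality case, suppose $\sum_{F\in\FF}\binom{n}{|F|}^{-1}=L$. In the permutation-counting formulation this forces \emph{every} maximal chain $\sigma$ to contain exactly $L$ members of $\FF$ counted with multiplicity. Fix a rank $i$ with $0\le i\le n$. I would show that for any two $i$-subsets $A,A'$ of $[n]$, $\#[A,\FF]=\#[A',\FF]$: indeed, a standard swapping argument lets one pass from a maximal chain through $A$ to one through $A'$ by a sequence of elementary moves each of which changes the chain only at a single level $j\ne i$; at each step the total count of $\FF$-members on the chain stays $L$, so the multiplicity contributed at level $i$ cannot change, whence $\#[A,\FF]=\#[A',\FF]$. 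Doing this for every rank $i$ shows the multiplicity of a set depends only on its size, i.e.\ $\FF$ is homogeneous. The converse direction (homogeneous $\Rightarrow$ equality in \eqref{eq:multi-BLYM} when $\FF$ contains no multichain of length $L+1$) is a direct computation: if $\#[F,\FF]=r_{|F|}$ for a non-negative integer sequence $(r_i)$, then the no-$(L+1)$-multichain condition says exactly that along every maximal chain the $r_i$ at the occupied ranks sum to at most $L$, and homogeneity together with equality in the BLYM bound forces this sum to be exactly $L$ on every chain; summing $\sum_i r_i\binom{n}{i}\binom{n}{i}^{-1}=\sum_i r_i$ and comparing with the chain count gives $\sum_{F\in\FF}\binom{n}{|F|}^{-1}=L$. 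The main obstacle is making the swapping argument for the equality case fully rigorous in the multiset setting — ensuring that the elementary chain moves are available and that ``count with multiplicity'' behaves well under them — but this is essentially the classical argument of Erd\H os for $L$-Sperner families \cite{erdos} with bookkeeping for multiplicities, so no new idea is needed.
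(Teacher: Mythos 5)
Your ``lifting'' reduction does not go through: the claimed identity $\sum_{t<\#[F,\FF]}\binom{n+|Y|}{|F|+t}^{-1}=\#[F,\FF]\binom{n}{|F|}^{-1}$ is false (take $n=1$, $|Y|=1$, $F=\emptyset$ with multiplicity $2$: the left side is $1+\tfrac12$, the right side is $2$), and since the left side can be strictly smaller than the right side, a BLYM bound for the lifted family $\widehat\FF$ does not yield the bound (\ref{eq:multi-BLYM}) for $\FF$. Your alternative route — counting pairs $(\sigma,F)$ with $F$ on the maximal chain $\sigma$, weighted by multiplicity — is correct and is in effect how the paper proves Theorem~\ref{ujBLYM}, from which the inequality part of this lemma is cited, so for the inequality you and the paper coincide.

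For the equality direction you take a genuinely different route. The paper inducts on $L$: it peels off the simple antichain $\FF_1$ of maximal elements, observes that $\FF_2=\FF\setminus\FF_1$ contains no multichain of length $L$, deduces by additivity that both BLYM sums must be tight, and applies the inductive hypothesis, with the strict Sperner theorem as the $L=1$ base. You instead run Erd\H os's chain-swapping argument directly on the multiset: equality forces every maximal chain to meet $\FF$ in exactly $L$ members counted with multiplicity; replacing a chain at a single level $i$ then shows that two $i$-sets at symmetric difference $2$ have the same multiplicity, and connectivity of that graph gives homogeneity. This is correct and somewhat more self-contained than the paper's reduction to the strict Sperner theorem. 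One correction to your description: you say the elementary moves change the chain ``only at a single level $j\ne i$''; such moves would leave the level-$i$ member fixed at $A$ and prove nothing. The move that yields $\#[A,\FF]=\#[A',\FF]$ must replace the chain precisely at level $i$ while keeping the other levels fixed. Finally, your sketch of the converse (``homogeneous $\Rightarrow$ equality'') is circular as written — it invokes the very equality it is trying to establish — and indeed that direction fails as literally stated (a single full level with $L\ge 2$ is homogeneous with no long multichain, yet $\sum_F\binom{n}{|F|}^{-1}=1<L$); the paper's own proof likewise establishes only the necessity of homogeneity, which is all that is used later.
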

\begin{proof}
The inequality part follows from Theorem~\ref{ujBLYM}, $k=M=1$. Suppose now we have equality in (\ref{eq:multi-BLYM}).   
We claim then that  $\FF$  can be partitioned into $L$ or less antichains. (In fact, this
is the multiset analogue of the well-known dual version of Dilworth's Theorem.) We now
mimick the inductive proof that works for simple families. 
For $L=1$, $\FF$ has to be a simple family and the claim is exactly the strict Sperner Theorem. 
Let $L>1$ and assume that the statement is true for all $1\le L^{\prime}<L$.
Consider  the set $\FF_1$ of maximal elements in $\FF$
(note that the multiplicity of each element in $\FF_1$ is one). Then   $\FF_2:=\FF\setminus\FF_1$ 
contains no multichain of length $L$. Thus we have
\begin{equation}
\sum_{F\in \FF_1}\frac{1}{\binom{n}{|F|}}\le 1\text{ and }\sum_{F\in\FF_2}\frac{1}{\binom{n}{|F|}}\le L-1.
\label{eq:ind} 
\end{equation}
But we also have
$$
L=\sum_{F\in \FF}\frac{1}{\binom{n}{|F|}}=\sum_{F\in \FF_1}\frac{1}{\binom{n}{|F|}}+\sum_{F\in\FF_2}\frac{1}{\binom{n}{|F|}},$$
therefore equality holds in both inequalities at (\ref{eq:ind}), and by the induction hypothesis both $\FF_1$ and $\FF_2$ are
homogeneous. The lemma follows.
 \end{proof}

\section{Convex hull of profile matrices of $M$-part multi-families}
\label{hull} 
The vertices of the convex hull of profile matrices of different kind of families were described by
P. L. Erd\H os, Frankl, and Katona
  \cite{efk}, facilitating the optimization of linear functions of the entries of profile matrices of members
  of the family in question.  P. L. Erd\H os and Katona  \cite{more-part} adapted the method 
  for $M$-part   Sperner families, and recently Aydinian,  Czabarka, P. L. Erd\H os, and  Sz\'ekely
  adapted it 
for 1-dimensional $M$-part $(m_1,\ldots,m_M; L_1,\ldots,L_M)$  Sperner families.  The purpose 
of this section to generalize these results for $k$-dimensional $M$-part  Sperner multi-families, and even further.

  Let $X=X_1 \uplus X_2 \uplus \cdots \uplus X_M$ be a partition of
the $n$-element underlying set $X,$ where $|X_i|=m_i\geq 1$ and
$m_1+ \ldots+ m_M=n$. Let $\FF $ be a multi-family of subsets of $X$. The
profile matrix $\Pp(\FF):=(p_{i_1,\ldots,i_M})_{(i_1,\ldots,i_M)\in  \pi_M  }$
can be   identified with a point or its  location vector in  the Euclidean space 
 $\mathbb{R}^N$, where $N=\prod_{j=1}^M n_i$.

Let $\alpha \subseteq \mathbb{R}^N$ be a finite point set. Let
$\langle \alpha \rangle$ denote  the {\sl convex hull} of the point
set, and  $\varepsilon(\alpha)
=\varepsilon( \langle \alpha\rangle)$  its {\sl extreme points}. It is
well-known that $\langle \alpha \rangle$  is equal to the set of all
convex linear combinations of its extreme points.

Let $\AAA $ be a family of multi-families of subsets of $X$. Let  $\mu
(\AAA)$  denote the set of all profile-matrices of the multi-families in $\AAA,$
i.e. $$\mu(\AAA)=\{\Pp(\FF): \FF\in \AAA\}.$$ Then the extreme points
$\varepsilon(\mu (\AAA))$ are integer vectors and they are profile
matrices  of   multi-families from $\AAA$.

In \cite{more-part}, P. L. Erd\H os and G.O.H. Katona developed 
a general method to determine the extreme points  $\varepsilon(\mu
(\AAA))$ for families of simple families. We adapt their results to a more general
setting. Let $I\Subset \pi_M$.
Let  $T(I)$
denote  the $M$-dimensional matrix, in which the entry
$t_{i_1,\ldots,i_M}(I)=\#[(i_1,\ldots,i_M),I]$ .
Furthermore, let $S(I)$ be the $M$-dimensional matrix, in which
$S_{i_1,\ldots,i_M}(I)=t_{i_1,\ldots,i_M}(I) { \binom{m_1}{ i_1}}\cdots {\binom{m_M}{ i_M }}$.
\old{
\begin{equation}\label{weight}
  S_{i_1,\ldots,i_M}(I)=\left\{%
\case { \binom{m_1}{ i_1}}\cdots \binom{m_M}{i_M }
      &  (i_1,\ldots,i_M)\in I \\
     0 &  (i_1,\ldots,i_M)\not\in I, \\
\end{array}\right.
\end{equation}
and let
\begin{displaymath}
  I(\FF):=\Big \{ (i_1,\ldots,i_M) \in \Pi
\  :\  P_{i_1,\ldots,i_M}(\FF) \ne 0 \Big \}.
\end{displaymath}
}
Recall that  a
multi-family of subsets of an $M$-partitioned underlying set  is
called {\sl  homogeneous}, if for any set, the sizes of its
intersections with the partition classes  already determine the (possibly $0$) multiplicity
with which
the set belongs to the multi-family.  It is easy to see that
 a homogeneous multi-family $\FF$ on 
$X$ has $\Pp(\FF) = S(I)$ for a certain multiset $I\Subset\pi_M$.

  We say that $\mathfrak{L}$  is a {\sl product-permutation} of $X$, if
the ordered $n$-tuple $\mathfrak{L}=(x_1,\ldots,x_n)$ is a permutation of
 $X=X_1\uplus X_2\uplus \cdots \uplus X_M$ such that
$X_j=\left \{x_i\ : \ i=m_1+\cdots+m_{j-1}+1, \ldots, m_1+\cdots+m_j
\right \}$ i.e. is $\mathfrak{L}$  is a juxtaposition of
permutations of $X_1$, $X_2$,\ldots,$X_M$, in this order. Furthermore,
we say that a subset $H \subseteq X$ is {\sl initial with respect}
to $\LL$, if for all $j=1,2,\ldots,M$ we have
\begin{displaymath}
H \cap X_j = \left\{x_{m_1+\cdots+m_{j-1}+1},\ldots,x_{m_1+ \cdots
+m_{j-1}+ |H \cap X_j|}\right \},
\end{displaymath}
i.e. $H\cap X_j$ is an initial segment in the permutation of $X_j.$
For an  $\HH$ multi-family on $X$, define  $\HH(\LL)=\multiset{ H^{\#[H,\HH]} : H \hbox{ is initial with respect to  } \LL}$.
Similarly, for  an $\AAA$ family of multi-families on $X$, 
let $\AAA(\LL):=\{\HH(\LL) : \HH\in \AAA\}.$ 

\begin{lemma}[cf. \cite{more-part} Lemma 3.1]
\label{th-blow}
Suppose that  for  a finite family $\AAA $ of $M$-part multi-families the set $\mu(\AAA (\LL))$
does not depend on the choice of $\LL.$ Then
\begin{equation}\label{eq-blow}
\mu (\AAA) \subseteq \left \langle \Big \{ S(I) :  I\Subset\pi_M \hbox{  with }     T(I)\in 
\mu(\AAA(\LL) )    \Big \}\right\rangle
\end{equation}
holds.
\end{lemma}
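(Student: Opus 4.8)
The plan is to follow the template of \cite{more-part}, Lemma 3.1, adapting each step from simple families to multi-families, which mostly amounts to tracking multiplicities through the argument. The key observation is the classical ``compression/shifting'' fact: for any multi-family $\HH$ on $X$ and any product-permutation $\LL$, the blown-up initial multi-family $S(I)$ where $T(I)=T(\HH(\LL))$ is obtained by a homogenization that only increases membership (with multiplicity) where it is allowed, and that averaging over all product-permutations $\LL$ recovers $\Pp(\HH)$ as a convex combination. So I would proceed as follows.

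First I would fix an arbitrary multi-family $\FF\in\AAA$ and show that its profile matrix $\Pp(\FF)$ is a convex combination of matrices of the form $S(I)$ with $T(I)\in\mu(\AAA(\LL))$. For each product-permutation $\LL$, form the initial multi-family $\FF(\LL)$; by hypothesis $T(\FF(\LL))\in\mu(\AAA(\LL))=\mu(\AAA(\LL'))$ for every $\LL'$, so all these $T$-matrices are legitimate index matrices $I$ appearing on the right-hand side of (\ref{eq-blow}), and $S(T(\FF(\LL)))$ is the corresponding point. Next I would verify the identity
\begin{equation*}
\Pp(\FF)=\frac{1}{\prod_{j=1}^M m_j!}\sum_{\LL}S\bigl(T(\FF(\LL))\bigr),
\end{equation*}
where the sum runs over all $\prod_j m_j!$ product-permutations. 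Entry by entry this says: for a fixed profile vector $(i_1,\ldots,i_M)$, the number (with multiplicity) of $F\in\FF$ with $|F\cap X_j|=i_j$ for all $j$ equals the average, over $\LL$, of $t_{i_1,\ldots,i_M}(\FF(\LL))\cdot\prod_j\binom{m_j}{i_j}$. This is the standard ``each set of a given profile becomes initial for exactly $\prod_j i_j!(m_j-i_j)!$ product-permutations, and distinct sets of the same profile become the \emph{same} initial set'' counting, now carried out with multiplicities $\#[F,\FF]$, which simply pass through the linear identity unchanged. The multiset bookkeeping in Section~1 (linearity of $\sum_{F\in\FF}$ in multiplicities, the definition of $\uplus$) is exactly what makes this go through verbatim.

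The step I expect to need the most care is confirming that $\FF(\LL)$ is indeed a multi-family in $\AAA$ — or rather, that this is \emph{not} needed and only $T(\FF(\LL))\in\mu(\AAA(\LL))$ matters, which is precisely the hypothesis ``$\mu(\AAA(\LL))$ does not depend on $\LL$.'' The subtlety is that $S(T(\FF(\LL)))$ need not itself be the profile matrix of a member of $\AAA$; all we claim is that it is one of the generating points $S(I)$ on the right of (\ref{eq-blow}), and for that we only need $I:=T(\FF(\LL))$ to satisfy $T(I)\in\mu(\AAA(\LL))$, which holds since $T(I)=T(\FF(\LL))=T(\FF'(\LL))$ for some $\FF'\in\AAA$ by the $\LL$-independence (take $\FF'=\FF$ and note $T(\FF(\LL))\in\mu(\AAA(\LL))$ by definition). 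Once the displayed convex-combination identity is established with nonnegative coefficients summing to $1$, the lemma is immediate: $\Pp(\FF)$ lies in the convex hull of the set on the right-hand side of (\ref{eq-blow}), and since $\FF\in\AAA$ was arbitrary, $\mu(\AAA)$ is contained in that convex hull. I would close by remarking that the only genuine difference from \cite{more-part} is notational — replacing set-membership indicators by multiplicities $\#[\cdot,\cdot]$ — and that the homogeneity characterization recalled just above (a homogeneous $\FF$ has $\Pp(\FF)=S(I)$ for some $I\Subset\pi_M$) guarantees the generating points are exactly the profile matrices of homogeneous multi-families whose transversal lands in $\mu(\AAA(\LL))$.
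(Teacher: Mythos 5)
Your proof is correct and takes essentially the same route as the paper: you verify directly the averaging identity $\Pp(\FF)=\frac{1}{\prod_j m_j!}\sum_{\LL}S(I_{\LL})$ (with $T(I_{\LL})=\Pp(\FF(\LL))$), whereas the paper arrives at the same identity by double-counting the sum $\sum_{(\LL,H)}\Pp(\HH_H)/\prod_j m_j!$ over incidences, but the counting is identical, as is the observation that only $\Pp(\FF(\LL))\in\mu(\AAA(\LL))$ (not $\FF(\LL)\in\AAA$) is needed. The opening allusion to a ``compression/shifting'' monotonicity fact is a red herring — no such monotonicity is actually used, only the combinatorial identity — but this does not affect the correctness of the argument that follows.
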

The next theorem follows easily from this lemma:
\begin{theorem}[cf. \cite{more-part} Theorem 3.2]\label{th-blow2}
Suppose that  a finite family $\AAA $ of $M$-part multi-families satisfies the following two
conditions:
\begin{equation}\label{eq-blow2}
\hbox{the\ set\ }
\mu(\AAA (\LL)) \hbox{
does not depend on } \LL, \hbox{ and }
\end{equation}
\begin{equation}\label{eq-blow3}
\text{\ for all\ }I\Subset\pi_M, \  T(I) \in \mu(\AAA(\LL)) \mbox{
implies } S(I) \in
  \mu(\AAA).
\end{equation}
  Then
\begin{equation}\label{eq-blow4}
  \varepsilon(\mu(\AAA)) =\varepsilon\biggl(\Bigl\{ S(I) : 
    I\Subset\pi_M , T(I)\in 
\mu(\AAA(\LL)) 
  \Bigl\} \biggl).
  \end{equation}
Consequently, among the maximum size elements of $\AAA$, there are homogeneous ones, and the profile matrices of maximum size elements of $\AAA$ are convex linear combinations of the profile matrices of homogeneous maximum size elements.
\end{theorem}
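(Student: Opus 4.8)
The plan is to deduce the theorem from Lemma~\ref{th-blow} by a short convexity argument. Put
$$\BB:=\bigl\{\,S(I) : I\Subset\pi_M,\ T(I)\in\mu(\AAA(\LL))\,\bigr\}.$$
By hypothesis (\ref{eq-blow2}) this set does not depend on $\LL$, and it is a \emph{finite} point set: the matrix $T(I)$ determines $I$ (its entries are exactly the multiplicities $\#[(i_1,\ldots,i_M),I]$), and $T(I)$ ranges over the finite set $\mu(\AAA(\LL))$, finite because $\AAA$, hence $\AAA(\LL)$, is finite. First I would record two inclusions: $\mu(\AAA)\subseteq\langle\BB\rangle$, which is exactly Lemma~\ref{th-blow}; and $\BB\subseteq\mu(\AAA)$, which is exactly hypothesis (\ref{eq-blow3}), since every $S(I)\in\BB$ has $T(I)\in\mu(\AAA(\LL))$ and therefore $S(I)\in\mu(\AAA)$. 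Passing to convex hulls and using idempotence of $\langle\cdot\rangle$ then yields $\langle\mu(\AAA)\rangle=\langle\BB\rangle$.

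Next I would use the elementary facts that the extreme points of the convex hull of a finite point set lie among those points (so $\varepsilon(\mu(\AAA))\subseteq\mu(\AAA)$ and $\varepsilon(\BB)\subseteq\BB$, as already noted in the text) and that $\varepsilon(\cdot)$ depends only on the convex hull. Together with $\langle\mu(\AAA)\rangle=\langle\BB\rangle$ these give
$$\varepsilon(\mu(\AAA))=\varepsilon\bigl(\langle\mu(\AAA)\rangle\bigr)=\varepsilon\bigl(\langle\BB\rangle\bigr)=\varepsilon(\BB),$$
which is precisely (\ref{eq-blow4}). Moreover, these common extreme points are matrices of the form $S(I)$, and $S(I)=\Pp(\FF_I)$ for the homogeneous multi-family $\FF_I$ in which each set of profile $(i_1,\ldots,i_M)$ occurs with multiplicity $t_{i_1,\ldots,i_M}(I)$.

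Finally, for the ``Consequently'' part, write $m:=\max_{\GG\in\AAA}|\GG|$ and note that $|\FF|=\ell(\Pp(\FF))$, where $\ell$ is the linear form summing all entries of an $M$-dimensional matrix. Thus $m=\max_{v\in\mu(\AAA)}\ell(v)=\max_{v\in\langle\BB\rangle}\ell(v)$, and a linear form on a polytope attains its maximum at an extreme point, i.e.\ at some $S(I)\in\varepsilon(\BB)=\varepsilon(\mu(\AAA))\subseteq\mu(\AAA)$; as $S(I)=\Pp(\FF_I)$ with $\FF_I$ homogeneous, this is a homogeneous maximum-size member. And if $\FF\in\AAA$ has $|\FF|=m$, then $\Pp(\FF)\in\langle\BB\rangle$, so $\Pp(\FF)=\sum_j\lambda_j\,S(I_j)$ with $S(I_j)\in\varepsilon(\BB)$, $\lambda_j>0$, $\sum_j\lambda_j=1$; applying $\ell$ gives $m=\sum_j\lambda_j\,\ell(S(I_j))$ with each $\ell(S(I_j))=|\FF_{I_j}|\le m$, forcing $\ell(S(I_j))=m$ for all $j$. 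Hence each $S(I_j)$ is the profile matrix of a homogeneous maximum-size multi-family and $\Pp(\FF)$ is a convex combination of such. The only bookkeeping point requiring care is that ``$S(I)\in\mu(\AAA)$'' does supply a genuinely \emph{homogeneous} member of size $m$, which is immediate from $S(I)=\Pp(\FF_I)$; beyond that I expect no real obstacle, the substance of the argument residing in Lemma~\ref{th-blow}.
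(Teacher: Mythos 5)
Your argument is correct and matches the paper's own proof: both obtain $\langle\mu(\AAA)\rangle=\langle\{S(I): I\Subset\pi_M,\, T(I)\in\mu(\AAA(\LL))\}\rangle$ by combining Lemma~\ref{th-blow} (which gives $\mu(\AAA)$ inside the right-hand hull) with condition (\ref{eq-blow3}) (which gives the reverse containment of the underlying sets), and then pass to extreme points using that equal convex hulls have equal extreme-point sets. You additionally spell out the standard linear-optimization argument behind the final ``Consequently'' clause, which the paper leaves implicit.
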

\begin{proof}
The identity 
$$\langle \mu (\AAA)\rangle  = \left \langle \Big \{ S(I) :  I\Subset\pi_M \hbox{  with }     T(I)\in
\mu(\AAA(\LL) )    \Big \}\right\rangle$$
follows from (\ref{eq-blow}) and  (\ref{eq-blow3}). If two convex sets are equal, then so are their extreme points.
\end{proof}

For any finite set $\Gamma$,  a {\sl $\Gamma$-multiplicity constraint} $\MMM_{\Gamma}$ is
$$\MMM_{\Gamma}=\{(A_{\gamma}\ge 0,\{\alpha^{\gamma}_{(i_1,\ldots,i_M)}\ge 0:(i_1,\ldots,i_M)\in\pi_M\}):\gamma\in\Gamma\}.
$$
We say that a multiset $\FF\Subset X$ satisfies the $\Gamma$-multiplicity constraint $\MMM_{\Gamma}$, if
\begin{equation*} 
\forall \gamma\in \Gamma \hbox{\ \ } \sum_{ (i_1,\ldots,i_M)\in \pi_M }  \alpha^\gamma_{i_1,\ldots,i_M}\cdot \max\{\#[F,\FF]: \forall j\in[M]\,\, |F\cap X_{j}|=i_j\} \leq A_\gamma.
\end{equation*} 
Analogously, a multiset $I\Subset\pi_M$ satisfies the $\Gamma$-multiplicity constraint $\MMM_{\Gamma}$, if
\begin{equation*}
\forall \gamma\in \Gamma \hbox{\ \ } \sum_{ (i_1,\ldots,i_M)\in \pi_M }  \alpha^\gamma_{i_1,\ldots,i_M}\cdot\#[(i_1,\ldots,i_M),I]\leq A_\gamma.
\end{equation*} 
It is easy to see that simple  families can be characterized  by the following condition: For all $(i_1,\ldots,i_M)\in\pi_M$, $\max\{\#[F,\FF]:\forall j |F\cap X_j|=i_j\}\leq 1$. This in turn can be written in the form of  a $\Gamma$-multiplicity constraint
by $\Gamma=\pi_M, A_{\gamma}=1, \alpha^\gamma_\lambda=\delta_{\gamma, \lambda}$ using
the Kronecker $\delta$ notation.
\begin{theorem} \label{hullappl}
To the family $\AAA$ of $k$-dimensional $M$-part Sperner multi-families with parameters $L_P$  for $P\in \binom{[M]}{k}$ satisfying 
a  fixed $\Gamma$-multiplicity constraint $\MMM_{\Gamma}$, Theorem~\ref{th-blow2} applies. In other words, all extreme points of $\mu(\AAA)$ come
from homogeneous multi-families.
\end{theorem}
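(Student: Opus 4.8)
The plan is to verify that the family $\AAA$ of all $k$-dimensional $M$-part Sperner multi-families with the prescribed parameters $\{L_P:P\in\binom{[M]}{k}\}$ that in addition satisfy the fixed $\Gamma$-multiplicity constraint $\MMM_\Gamma$ fulfils the two hypotheses (\ref{eq-blow2}) and (\ref{eq-blow3}) of Theorem~\ref{th-blow2}; granting those, Theorem~\ref{th-blow2} yields the assertion. Two elementary observations do the work. First, both defining properties of membership in $\AAA$ see a multi-family only through its per-profile data: the Sperner inequality for a given $P$ is a bound on the number, counted with multiplicity, of members of $\FF$ whose profile lies in a product of chains of integers (the chains of sizes realised by the $\CC_j$), while $\MMM_\Gamma$ is a condition on the quantities $\max\{\#[F,\FF]:\forall j\ |F\cap X_j|=i_j\}$; consequently both are invariant under any bijection of $X$ mapping each $X_j$ onto itself, and both are inherited by sub-multi-families, since restricting a multi-family can only shrink the relevant Sperner multisets and can only decrease those per-profile maxima.

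For (\ref{eq-blow2}) I would argue by symmetry. Given product-permutations $\LL$ and $\LL'$, let $\sigma$ be the bijection of $X$ that sends the element of $\LL$ in position $i$ to the element of $\LL'$ in position $i$; the two permutations have the same block structure, so $\sigma$ fixes each $X_j$ setwise, it carries the $\LL$-initial set of any profile to the $\LL'$-initial set of the same profile, and by the first observation it maps $\AAA$ onto $\AAA$ (acting on a multi-family memberwise). For $\HH\in\AAA$ we then have $\sigma(\HH)\in\AAA$ and $\sigma(\HH)(\LL')=\sigma\bigl(\HH(\LL)\bigr)$, hence $\Pp\bigl(\sigma(\HH)(\LL')\bigr)=\Pp\bigl(\HH(\LL)\bigr)$ because $\sigma$ preserves profiles. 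This proves $\mu(\AAA(\LL))\subseteq\mu(\AAA(\LL'))$, and interchanging $\LL$ and $\LL'$ gives equality.

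For (\ref{eq-blow3}), fix a product-permutation $\LL$ and let $I\Subset\pi_M$ satisfy $T(I)=\Pp(\HH(\LL))$ for some $\HH\in\AAA$. Writing $r_{i_1,\dots,i_M}:=\#[(i_1,\dots,i_M),I]$, the hypothesis says precisely that in $\HH$ the unique $\LL$-initial set of each profile $(i_1,\dots,i_M)$ occurs with multiplicity $r_{i_1,\dots,i_M}$. Let $\GG$ be the homogeneous multi-family with $\Pp(\GG)=S(I)$, so that every set of profile $(i_1,\dots,i_M)$ lies in $\GG$ with multiplicity $r_{i_1,\dots,i_M}$; we must show $\GG\in\AAA$. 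The constraint $\MMM_\Gamma$ holds for $\GG$ because its per-profile maximum at $(i_1,\dots,i_M)$ equals $r_{i_1,\dots,i_M}$, the very same per-profile maximum of the sub-multi-family $\HH(\LL)$ of $\HH$ (the only member of $\HH(\LL)$ of that profile being the initial set), and $\HH(\LL)$ inherits $\MMM_\Gamma$ from $\HH$. For the Sperner inequality attached to a $P\in\binom{[M]}{k}$, chains $\CC_j$ in $X_j$ for $j\in P$, and fixed sets $D_i\subseteq X_i$ for $i\notin P$: a chain of subsets of $X_j$ contains at most one set of each size, so $A_j\mapsto|A_j|$ identifies $\CC_j$ with the set $S_j$ of sizes it realises, and since multiplicities in $\GG$ depend only on the profile, the multiset bounded in the inequality for $\GG$ has size $\sum_{\mathbf s\in\prod_{j\in P}S_j}r_{(\mathbf s,\,(|D_i|)_{i\notin P})}$, which depends on the configuration only through the size-sets $S_j$ and the sizes $|D_i|$. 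Testing the corresponding inequality for $\HH(\LL)$ with $\CC_j$ replaced by the chain of $\LL$-initial segments of $X_j$ with size-set $S_j$ and $D_i$ replaced by the $\LL$-initial segment of $X_i$ of size $|D_i|$, the admissible members of $\HH(\LL)$ are exactly the $\LL$-initial sets of the profiles $(\mathbf s,\,(|D_i|)_{i\notin P})$, each with multiplicity $r_{(\mathbf s,\,(|D_i|)_{i\notin P})}$, so this multiset has the same size as before; since $\HH(\LL)\in\AAA$ that size is $\le L_P$, and therefore $\GG$ satisfies all the Sperner inequalities. Hence $\GG\in\AAA$ with $\Pp(\GG)=S(I)$, which is (\ref{eq-blow3}).

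I expect the crux to be the last reduction: observing that, for a homogeneous multi-family, the left-hand side of each Sperner inequality depends on the choice of $P$, the chains $\CC_j$ and the sets $D_i$ only through the ``shape'' of the configuration (the size-sets of the $\CC_j$ and the sizes of the $D_i$), and then realising that shape by a purely initial-segment configuration so that the already-known bound for $\HH(\LL)$ can be transported without loss to $\GG$. The accompanying bookkeeping --- that chains carry at most one set per size, that $\HH(\LL)$ is a sub-multi-family of $\HH$, and that membership in $\AAA$ passes to sub-multi-families --- is straightforward but should be recorded carefully.
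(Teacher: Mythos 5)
Your argument is correct and takes essentially the same route as the paper: verify hypotheses (\ref{eq-blow2}) and (\ref{eq-blow3}) of Theorem~\ref{th-blow2}, using that $\HH(\LL)$ inherits both the Sperner conditions and $\MMM_\Gamma$ from $\HH$, and that a Sperner count for a \emph{homogeneous} multi-family depends on the test configuration only through its ``shape'' (size-sets of the chains and sizes of the $D_i$), which can be realised by $\LL$-initial segments. The paper packages the transfer as a contradiction, choosing a product-permutation $\LL_0$ adapted to a hypothetical offending configuration; you instead fix one $\LL$ and adapt the configuration to it. The computation is the same, and your direct form is, if anything, a little cleaner. One inaccuracy in the opening paragraph should be corrected: the claim that membership in $\AAA$ ``sees a multi-family only through its per-profile data'' is false for the Sperner condition in general (for $M=1$, $L=1$, the families $\{\{1\},\{2,3\}\}$ and $\{\{1\},\{1,2\}\}$ have the same profile vector but only the first is an antichain). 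The two facts you actually need there --- invariance under block-fixing bijections, and heredity to sub-multi-families --- are both true, but for different reasons: the former because such a bijection permutes the set of admissible configurations $(P,\{\CC_j\},\{D_i\})$, the latter because restricting a multi-family can only shrink each tested multiset and each per-profile maximum, which is the direct argument you give parenthetically. With that sentence repaired, the proof stands.
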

This theorem implies the results of \cite{more-part} and \cite{tour} on the convex hull with one exception: there not just all extreme points came from homogeneous families, but all homogeneous families provided
extreme points. This is not the case, however, for multi-families, but characterizing which homogeneous families are extreme is hopeless. For {\sl simple families}, however, we can characterize these extreme points.  

We say that an $I$ 
$k$-dimensional $M$-part multi-transversal with $\Gamma$-multiplicity constraint $\MMM_{\Gamma}$ is 
{\sl lexicographically maximal (LEM)}, 
 if the
support set  $\supp(I)\subseteq\pi_M$ of the multiset $I$ has an ordering $\vec{j}_1,\vec{j}_2,\ldots,\vec{j}_s$, such that  for every  $I^{\star}$ 
$k$-dimensional $M$-part multi-transversal with $\Gamma$-multiplicity constraint $\MMM_{\Gamma}$, the following holds:\\
(i) If $\vec{j}_1\in \supp(I^{\star})$, then $\#[\vec{j}_1,I]\geq \#[\vec{j}_1,I^{\star}]$, and \\
(ii) for every $1\le \ell\le s-1$, if $\{\vec{j}_1,\vec{j}_2,\ldots,\vec{j}_\ell\}\subseteq
\supp(I^{\star})$ and $\#[\vec{j}_h,I]= \#[\vec{j}_h,I^{\star}]$
for $h=1,2,\ldots,\ell$, then $\#[\vec{j}_{\ell+1},I]\geq \#[\vec{j}_{\ell+1},I^{\star}]$.
\begin{lemma}\label{lemma:LEM} 
For a family $\AAA$ of $k$-dimensional $M$-part Sperner multi-families with parameters $L_P$  for $P\in \binom{[M]}{k}$ satisfying 
a  $\Gamma$-multiplicity constraint $\MMM_{\Gamma}$, all the profile matrices $S(I)$, where  $I\Subset\pi_M$ is a LEM  $k$-dimensional multi-transversal with the same $\Gamma$-multiplicity constraint, are extreme points of $\mu(\AAA)$.
\end{lemma}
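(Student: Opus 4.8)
The plan is to show that every LEM multi-transversal $I$ gives a profile matrix $S(I)$ that cannot be written as a proper convex combination of other points of $\mu(\AAA)$, which by definition makes it an extreme point. First I would note, using Theorem~\ref{hullappl} (equivalently the conclusion of Theorem~\ref{th-blow2} as applied in that theorem), that the extreme points of $\mu(\AAA)$ are exactly the extreme points of the set $\{S(J): J\Subset\pi_M,\ T(J)\in\mu(\AAA(\LL))\}$, and that all of these come from homogeneous multi-families; moreover $S(I)$ itself is the profile matrix of the homogeneous multi-family associated to the multi-transversal $I$, and $I$ being a $k$-dimensional multi-transversal satisfying $\MMM_\Gamma$ means precisely that this homogeneous multi-family lies in $\AAA$, so $S(I)\in\mu(\AAA)$. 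Hence it suffices to prove $S(I)$ is extreme among the $S(J)$, and since $S(\cdot)$ is an injective linear map (each coordinate $t_{i_1,\ldots,i_M}(J)$ is recovered from $S_{i_1,\ldots,i_M}(J)$ by dividing by the positive constant $\prod_j\binom{m_j}{i_j}$), it further suffices to prove that $T(I)$ is an extreme point of $\{T(J): J\Subset\pi_M,\ T(J)\in\mu(\AAA(\LL))\}$, i.e. among the multi-transversals (with the constraint).

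Next I would set up the separating-functional argument exploiting the ordering $\vec{j}_1,\ldots,\vec{j}_s$ of $\supp(I)$ guaranteed by the LEM property. The idea is the standard lexicographic one: define a linear functional on $\mathbb{R}^N$ of the form $\Phi(x)=\sum_{\ell=1}^{s} w_\ell\, x_{\vec{j}_\ell}$ with weights $w_1\gg w_2\gg\cdots\gg w_s>0$ chosen decreasing fast enough (for instance $w_\ell=W^{s-\ell}$ for a large integer $W$ exceeding any coordinate value attainable by a multi-transversal in $\AAA(\LL)$ — such a bound exists because $\AAA$ is finite, or more directly because the $\MMM_\Gamma$ constraint with $A_\gamma$ finite together with $L_P$ finite bounds all multiplicities). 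Properties (i) and (ii) in the definition of LEM say exactly that $T(I)$ lexicographically dominates $T(I^\star)$ on the coordinates $\vec{j}_1,\ldots,\vec{j}_s$ for every competitor $I^\star$; with the geometric weights this translates into $\Phi(T(I))\ge \Phi(T(I^\star))$ for all $I^\star$, and one checks that equality $\Phi(T(I))=\Phi(T(I^\star))$ forces $\#[\vec{j}_h,I^\star]=\#[\vec{j}_h,I]$ for $h=1,\ldots,\ell$ up to the point where $\supp(I^\star)$ first leaves $\{\vec j_1,\ldots,\vec j_s\}$, and then the next weight strictly penalizes — so in fact $\Phi$ is maximized over all $T(J)$ with $T(J)\in\mu(\AAA(\LL))$ uniquely at those $J$ agreeing with $I$ on all of $\vec{j}_1,\ldots,\vec{j}_s$; since coordinates outside $\supp(I)$ are zero for $I$ and any $J$ agreeing with $I$ everywhere on $\supp(I)$ while being a sub-multiset supported in $\supp(I)$ (the nonnegativity on the remaining coordinates) must equal $I$, the maximizer of $\Phi$ is unique, namely $T(I)$.

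Then I would finish by the routine observation that a point of a convex set $\langle\alpha\rangle$ at which a linear functional attains its maximum uniquely among the generators $\alpha$ is an extreme point of $\langle\alpha\rangle$: if $S(I)=\sum_t \lambda_t S(J_t)$ with $\lambda_t>0$, $\sum\lambda_t=1$, then applying $\Phi$ (pulled back along $S(\cdot)^{-1}$, or rather working directly with the analogous functional $\widetilde\Phi(S(J)):=\Phi(T(J))$, which is linear in the entries of $S(J)$ since $T(J)$ is obtained from $S(J)$ by coordinatewise scaling) gives $\widetilde\Phi(S(I))=\sum_t\lambda_t\widetilde\Phi(S(J_t))$, and uniqueness of the maximizer forces every $J_t=I$. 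Hence $S(I)\in\varepsilon(\{S(J): T(J)\in\mu(\AAA(\LL))\})=\varepsilon(\mu(\AAA))$, as desired.

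The main obstacle I anticipate is the bookkeeping around property (ii) of the LEM definition — specifically, that the lexicographic comparison is only claimed coordinate by coordinate and only "for every $1\le\ell\le s-1$" and only under the hypothesis $\{\vec j_1,\ldots,\vec j_\ell\}\subseteq\supp(I^\star)$. One has to be careful that a competitor $I^\star$ could simply omit $\vec j_1$ from its support entirely, in which case property (i) still pins down the $\vec j_1$-coordinate comparison (it gives $\#[\vec j_1,I]\ge 0$, which is vacuous unless $\vec j_1\in\supp(I^\star)$); so the argument that $\Phi(T(I))\ge\Phi(T(I^\star))$ needs a small case analysis on where $I^\star$ first disagrees with, or drops below, $I$ along the ordering, and one must verify the geometric gap between consecutive weights $w_\ell$ genuinely dominates the worst-case accumulated contribution of all later coordinates; that is where choosing $W$ strictly larger than the (finite) maximum coordinate value and the length $s$ does the work. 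Everything else — injectivity of $S(\cdot)$, the reduction via Theorem~\ref{hullappl}, and the extreme-point criterion — is standard.
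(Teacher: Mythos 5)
Your reduction is fine: the reduction via Theorem~\ref{th-blow2}/\ref{hullappl} to showing that $T(I)$ is an extreme point of $\{T(J): T(J)\in\mu(\AAA(\LL))\}$, and the placement of $S(I)$ into $\mu(\AAA)$, all match what the paper does. The lexicographic-functional idea is also in the right spirit (the paper effectively unrolls the same ordering $\vec j_1,\ldots,\vec j_s$, but coordinate-by-coordinate in a convex combination rather than through a single separating functional). Where I see a real gap is in the claim that the maximizer of $\Phi$ is \emph{unique}, namely $T(I)$. That is not true. Your own analysis (correctly) shows that $\Phi(T(I))=\Phi(T(I^\star))$ forces $I^\star$ to agree with $I$ on all of $\vec j_1,\ldots,\vec j_s=\supp(I)$, i.e.\ $T_{\vec j_\ell}(I^\star)=T_{\vec j_\ell}(I)$ for every $\ell$. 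But $\Phi$ is blind to coordinates outside $\supp(I)$, and nothing in the LEM definition (which only orders and constrains $\supp(I)$; condition (ii) stops at $\ell=s-1$) prevents a competitor $I^\star$ from agreeing with $I$ on $\supp(I)$ while having additional elements outside $\supp(I)$. Your parenthetical ``while being a sub-multiset supported in $\supp(I)$'' smuggles in precisely the hypothesis you need but have not justified: being a maximizer of $\Phi$ does not imply $\supp(I^\star)\subseteq\supp(I)$. Consequently, the extreme-point criterion you invoke (``a point where a linear functional attains its maximum uniquely among generators is extreme'') cannot be applied as stated.

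The repair, which is exactly the extra observation the paper makes, is to postpone the support argument until after the convex combination is written down. Suppose $T(I)=\sum_t\lambda_t T(J_t)$ with $\lambda_t>0$ summing to $1$. Applying $\Phi$ shows each $J_t$ agrees with $I$ on $\supp(I)$, as you argue. Then, for any $\vec w\notin\supp(I)$, the coordinate $T_{\vec w}(I)=0$ equals $\sum_t\lambda_t T_{\vec w}(J_t)$ with every summand nonnegative, so $T_{\vec w}(J_t)=0$ for all $t$; hence $\supp(J_t)\subseteq\supp(I)$, and combined with agreement on $\supp(I)$ this gives $J_t=I$. Note that this final nonnegativity step is doing real work and is not a consequence of $\Phi$-maximality; without it the argument does not close. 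A minor additional remark: your worry about a competitor ``omitting $\vec j_1$ entirely'' is not actually a problem --- in that case $\#[\vec j_1,I^\star]=0\le\#[\vec j_1,I]$ holds trivially, so the coordinate comparison is still available; the delicate case is adding support, not dropping it.
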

For simple $k$-dimensional $M$-part Sperner families $\FF$, i.e. when the $\Gamma$-multiplicity constraint $\MMM_{\Gamma}$ {\sl includes} the conditions
$\max\{\#[F,\FF]:\forall j\,\,|F\cap X_j|=i_j\}\leq 1$ for all $(i_1,\ldots,i_M)\in \pi_M$, every $I$ $k$-dimensional $M$-part Sperner multi-transversal with parameters $L_P$  for $P\in \binom{[M]}{k}$ satisfying 
a  $\Gamma$-multiplicity constraint $\MMM_{\Gamma}$ has the LEM property. This finally derives the convex hull results of  
\cite{tour} and \cite{more-part} from our results. Note, however,
that the $\Gamma$-multiplicity constraint provides new results even for the classical $M=1$ case. For completeness, we state explicitly our result for simple families.
\begin{theorem} \label{convex}
The extreme points of the convex hull of  profile matrices of all $k$-dimensional
$M$-part  simple Sperner families with a $\Gamma$-multiplicity constraint  $\MMM_{\Gamma}$
are exactly
the profile matrices of the homogeneous families corresponding to $k$-dimensional
$M$-part  simple transversals with the same $\Gamma$-multiplicity constraint $\MMM_{\Gamma}$. 
Therefore,
among the maximum size $k$-dimensional
$M$-part Sperner families   with a $\Gamma$-multiplicity constraint, there are homogeneous ones.
\end{theorem}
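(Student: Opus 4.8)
The plan is to obtain Theorem~\ref{convex} by feeding the right family into the convex-hull machinery of Section~\ref{hull} and then translating its abstract conclusion into the concrete statement. Let $\AAA$ be the family of all $k$-dimensional $M$-part simple Sperner families on the fixed partitioned set with parameters $\{L_P:P\in\binom{[M]}{k}\}$ obeying the $\Gamma$-multiplicity constraint $\MMM_\Gamma$. As observed just before the theorem, ``simple'' is itself expressible as a $\Gamma$-multiplicity constraint, so after enlarging $\MMM_\Gamma$ (if necessary) to contain the conditions $\max\{\#[F,\FF]:\forall j\ |F\cap X_j|=i_j\}\le 1$ we may regard $\AAA$ as a family of $k$-dimensional $M$-part Sperner multi-families subject to a single $\Gamma$-multiplicity constraint. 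Hence Theorem~\ref{hullappl} applies, and through Theorem~\ref{th-blow2} it yields the identity
$$\varepsilon(\mu(\AAA))=\varepsilon\Big(\big\{\,S(I):I\Subset\pi_M,\ T(I)\in\mu(\AAA(\LL))\,\big\}\Big).$$
Write $\GG$ for the bracketed set; by (\ref{eq-blow2}) it does not depend on $\LL$. It then suffices to prove two things: (a) $\GG$ is exactly the set of profile matrices of the homogeneous families corresponding to $k$-dimensional $M$-part simple transversals with $\MMM_\Gamma$; and (b) $\GG=\varepsilon(\GG)$, i.e.\ every member of $\GG$ is already an extreme point of $\langle\GG\rangle$.

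For (a) I would fix a product-permutation $\LL$ and examine $\AAA(\LL)$. The subsets of $X$ that are initial with respect to $\LL$ contain precisely one set of each profile vector in $\pi_M$, so an $\LL$-initial sub-multi-family is determined by the multiset $I\Subset\pi_M$ recording the profiles of its members with their multiplicities, and its profile matrix is $T(I)$. If $T(I)\in\mu(\AAA(\LL))$, i.e.\ $T(I)=\Pp(\HH(\LL))$ for some $\HH\in\AAA$, then $\HH(\LL)$ is itself a simple $k$-dimensional $M$-part Sperner family with $\MMM_\Gamma$ consisting of initial sets; feeding into its defining Sperner condition the chains $\CC_j$ formed by all initial segments of $X_j$ (with respect to $\LL$), and noting that for initial sets prescribing $D_i\subseteq X_i$ amounts to prescribing $|D_i|$, the Sperner condition for $\HH(\LL)$ turns exactly into inequality~(\ref{korlat}) for $I$; since there is a unique initial set of each profile, $\MMM_\Gamma$ for $\HH(\LL)$ becomes $\MMM_\Gamma$ for $I$, and simplicity of $\HH$ forces $\#[\vec\imath,I]\le 1$. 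Conversely, given a simple $k$-dimensional transversal $I$ with $\MMM_\Gamma$, the family of the associated initial sets lies in $\AAA$, equals its own $\LL$-initial part, and has profile matrix $T(I)$. Thus $T(I)\in\mu(\AAA(\LL))$ if and only if $I$ is a simple $k$-dimensional transversal with $\MMM_\Gamma$, and then $S(I)$ is, by the homogeneous-family/multi-transversal correspondence established earlier, the profile matrix of the homogeneous simple Sperner family attached to $I$. (The inclusion $S(I)\in\mu(\AAA)$ used here is condition~(\ref{eq-blow3}), already part of Theorem~\ref{hullappl}.)

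For (b) I would combine Lemma~\ref{lemma:LEM} with the remark preceding the theorem. Once $\MMM_\Gamma$ contains the simplicity conditions, every $k$-dimensional multi-transversal $I^{\star}$ with $\MMM_\Gamma$ has all multiplicities $\le 1$; hence, for any simple $k$-dimensional transversal $I$ with $\MMM_\Gamma$, conditions (i) and (ii) in the definition of LEM reduce to inequalities of the form $1\ge\#[\vec\jmath,I^{\star}]$, which hold trivially, so $I$ is LEM. Lemma~\ref{lemma:LEM} then gives $S(I)\in\varepsilon(\mu(\AAA))$, and with (a) we obtain $\GG\subseteq\varepsilon(\mu(\AAA))$. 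Since also $\varepsilon(\mu(\AAA))=\varepsilon(\GG)\subseteq\GG$ by the displayed identity, the three sets coincide, which is precisely the asserted description of $\varepsilon(\mu(\AAA))$. The closing ``therefore'' is immediate: $|\FF|=\sum p_{i_1,\ldots,i_M}$ is a linear functional of $\Pp(\FF)$, so its maximum over $\langle\mu(\AAA)\rangle$ is attained at an extreme point, now known to be the profile matrix of a homogeneous family (this is also the final sentence of Theorem~\ref{th-blow2}).

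I expect the main obstacle to be step (a): making the correspondence between $\LL$-initial sub-families and transversals fully precise, in particular verifying that the ``maximal'' choice of each chain $\CC_j$ and of each $D_i$ recovers inequality~(\ref{korlat}) with the correct parameter $L_P$, and that no information is lost when passing among an $\LL$-initial sub-family, its profile matrix $T(I)$, and the blow-up $S(I)$. By contrast, the LEM verification in (b) and the passage from Theorem~\ref{hullappl} to the displayed identity should be routine once the framework of Section~\ref{hull} is in hand.
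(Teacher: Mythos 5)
Your proposal follows exactly the route the paper takes: apply Theorem~\ref{hullappl} to the family of simple Sperner families (using the fact that simplicity can be encoded as part of $\MMM_\Gamma$), observe that under such a constraint every simple transversal has the LEM property, and invoke Lemma~\ref{lemma:LEM} together with the identity from Theorem~\ref{th-blow2} to conclude that all the $S(I)$ are extreme points and that these are all of them. The paper leaves the correspondence in your step (a) implicit, but your spelling-out of it is exactly the bookkeeping intended.
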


\section{Applications of the convex hull method}

Although the previous section reduces the
problem of finding the maximum size of such families to a "number" problem from
a "set" problem, however, we assert that the problem is still "combinatorial" due to the
complexity of transversals:
\begin{problem}\label{problem1} For a $(t_1,\ldots,t_M)\in \pi_M$, set the {\sl weight} $W(t_1,\ldots,t_M)=
\prod_{i=1}^M\binom{m_i}{t_i}.$ Find a set of codewords $C\subseteq\pi_M$ with the largest possible
sum of weights,
 such that for
every  $P\in\binom{[M]}{k}$ set of character positions, if the characters are prescribed
in any way for the $i\notin P$ character positions, at most $L_P$ from $C$
show all the prescribed values.
\end{problem}

In view of Theorem \ref{convex}, Problem~\ref{problem1} is equivalent to finding maximum size $k$-dimensional
$M$-part  simple Sperner families.
Recall  that this problem is not solved even for the case $L=1, k=1, M\geq 3$
(see \cite{tour} for a survey of results).
Note also that there are examples in \cite{tour} without a
full $1$-dimensional transversal defining  a maximum size homogeneous family,
unlike in the case $M=2,L=1$.

Our results allow us to prove that certain maximum size families must always be homogeneous. 
\begin{theorem}\label{genhom} Let $1\le k<M$ or $k=M=1$.
If every maximum size homogeneous $k$-dimensional $M$-part Sperner family (alternatively: Sperner multi-family)
satisfies
{\rm (\ref{allBLYM})} with equality, then every maximum size $k$-dimensional $M$-part Sperner family 
(Sperner multi-family) is
homogeneous.
\end{theorem}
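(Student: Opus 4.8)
The plan is to combine the convex-hull machinery of Section~\ref{hull} with the equality characterization in Theorem~\ref{ujBLYM=}. First I would apply Theorem~\ref{convex} (or Theorem~\ref{hullappl}, in the multi-family case, with the trivial $\Gamma$-multiplicity constraint $\Gamma=\pi_M$, $A_\gamma=1$, $\alpha^\gamma_\lambda=\delta_{\gamma,\lambda}$ for simple families, or no constraint at all for multi-families) to the family $\AAA$ of all $k$-dimensional $M$-part Sperner (multi-)families with the given parameters $L_P$. This tells us that the set $\mu(\AAA)$ of profile matrices is a polytope whose extreme points are exactly (respectively, include) the profile matrices of homogeneous members of $\AAA$. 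Since the objective functional $\FF\mapsto|\FF|=\sum_{(i_1,\ldots,i_M)}p_{i_1,\ldots,i_M}$ is linear in the entries of the profile matrix, its maximum over the polytope $\langle\mu(\AAA)\rangle$ is attained at an extreme point; hence there is a homogeneous maximum-size (multi-)family, and the maximum size equals the maximum size among homogeneous members.

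Next I would argue by contradiction. Suppose $\FF\in\AAA$ has maximum size but is not homogeneous. Let $\FF_0$ be a homogeneous member of maximum size, which exists by the previous paragraph, so $|\FF|=|\FF_0|$ is the common maximum. By the hypothesis of the theorem, $\FF_0$ satisfies all the BLYM inequalities in (\ref{allBLYM}) with equality; in particular, fixing any $P_0\in\binom{[M]}{k}$,
\begin{equation*}
\sum_{(i_1,\ldots,i_M)\in\pi_M}\frac{p_{i_1,\ldots,i_M}(\FF_0)}{\prod_{j=1}^M\binom{m_j}{i_j}}=\frac{L_{P_0}}{\prod_{j\in P_0}n_j}\prod_{j=1}^M n_j.
\end{equation*}
Because $\FF_0$ is homogeneous, writing $p_{i_1,\ldots,i_M}(\FF_0)=r_{i_1,\ldots,i_M}\prod_j\binom{m_j}{i_j}$, this equality says $\sum r_{i_1,\ldots,i_M}=\frac{L_{P_0}}{\prod_{j\in P_0}n_j}\prod_j n_j$, and moreover part (ii) of Theorem~\ref{ujBLYM=} gives that $L_P/\prod_{j\in P}n_j$ is independent of $P$, so condition (\ref{konstant}) holds. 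Now I would use this to get an upper bound on $|\FF|$ valid for every member of $\AAA$: apply Theorem~\ref{ujBLYM} to $\FF$ for the index set $P_0$, obtaining
\begin{equation*}
\sum_{(i_1,\ldots,i_M)}\frac{p_{i_1,\ldots,i_M}(\FF)}{\prod_j\binom{m_j}{i_j}}\le\frac{L_{P_0}}{\prod_{j\in P_0}n_j}\prod_j n_j,
\end{equation*}
but this does not immediately bound $|\FF|=\sum p_{i_1,\ldots,i_M}(\FF)$ since the weights $\prod_j\binom{m_j}{i_j}$ are not constant.

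The cleaner route, and the one I would actually commit to, avoids reproving any inequality and instead runs entirely inside the polytope. Since $\FF$ is a maximum-size member, $\Pp(\FF)$ lies on the face of $\langle\mu(\AAA)\rangle$ on which the linear functional "total size" is maximized; by Theorem~\ref{convex}/\ref{hullappl} this face is the convex hull of those homogeneous profile matrices $S(I)$ that themselves maximize the size, i.e. $\Pp(\FF)=\sum_t \lambda_t\,\Pp(\FF^{(t)})$ with $\lambda_t>0$, $\sum_t\lambda_t=1$, and each $\FF^{(t)}$ a homogeneous maximum-size member. The hypothesis then applies to each $\FF^{(t)}$: every one of them satisfies (\ref{allBLYM}) with equality. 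Summing the BLYM equality for $\FF^{(t)}$ against the weights $\lambda_t$ and using linearity of $\Pp(\FF)=\sum_t\lambda_t\Pp(\FF^{(t)})$ in each entry, we get that $\FF$ satisfies (\ref{allBLYM}) with equality as well. But now $1\le k<M$ or $k=M=1$, so Theorem~\ref{ujBLYM=} applies to $\FF$ and forces $\FF$ to be homogeneous — contradicting the assumption that it is not. (For the multi-family statement the identical argument goes through using Theorem~\ref{hullappl} in place of Theorem~\ref{convex}.)

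The main obstacle I anticipate is the bookkeeping in the last step: one must be sure that the convex-decomposition of $\Pp(\FF)$ can be taken to use only \emph{maximum-size} homogeneous members (not arbitrary extreme points of $\mu(\AAA)$), which is exactly the content of the "consequently" sentence of Theorem~\ref{th-blow2} and of the last sentence of Theorem~\ref{convex}, so it is available; and that "satisfies (\ref{allBLYM}) with equality" is preserved under the convex combination — this is immediate because both sides of each inequality in (\ref{allBLYM}) are, for fixed $P$, affine functions of the profile matrix entries, and equality of an affine function with a constant on each vertex of a simplex forces it on the whole simplex. The restriction $1\le k<M$ or $k=M=1$ is needed only so that Theorem~\ref{ujBLYM=} is applicable to $\FF$ in the final line; Example~\ref{example1} shows this restriction is essential.
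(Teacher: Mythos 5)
Your ``cleaner route'' is exactly the paper's argument: decompose $\Pp(\FF)$ as a convex combination of profile matrices of maximum-size homogeneous families (justified by the final clause of Theorem~\ref{th-blow2} via Theorems~\ref{hullappl}/\ref{convex}), transfer the BLYM equality to $\FF$ by linearity of the left-hand side of (\ref{allBLYM}) in the entries of the profile matrix, and finish by invoking Theorem~\ref{ujBLYM=} (which is why the range $1\le k<M$ or $k=M=1$ is needed). The preliminary detour toward a direct bound on $|\FF|$ was correctly abandoned and does not affect the validity of the argument you committed to.
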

\begin{proof}
Fix a $P\in\binom{[M]}{k}$ and let $C=L_P\prod_{j\notin P}n_j$. By the assumptions, the value of $C$ is independent of $P$. 
Let $\FF$ be a maximum size family/multi-family with profile matrix
$\Pp(\FF)=(p_{(i_1,\ldots,i_M)})$. Let $\GG_1,\ldots,\GG_s$ be en enumeration of all maximum size homogeneous families/multi-families, and
let $I_1,\ldots,I_s\subseteq\pi_M$ be the $(M-1)$-dimensional transversals/multi-transversals for which 
$\Pp(\GG_j):=(p_{(i_1,\ldots,i_M)}^{(j)})=S(I_j)$. By the assumptions for each $j\in[s]$ we have
$$\sum_{(i_1,\ldots,i_M)\in\pi_M}\frac{p_{(i_1\ldots,i_M)}^{(j)}}{\prod_{\ell=1}^M\binom{m_{\ell}}{i_{\ell}}}=C.
$$
By Theorems~\ref{hullappl} and \ref{convex} we have $\lambda_j\ge 0$ such that
$\sum_j\lambda_j=1$ and $\Pp(\FF)=\sum_{j=1}^s\lambda_j\Pp(\GG_j)$. Therefore
\begin{eqnarray*}
\sum_{(i_1,\ldots,i_M)\in\pi_M}\frac{p_{(i_1\ldots,i_M)}}{\prod_{\ell=1}^M\binom{m_{\ell}}{i_{\ell}}}
&=&\sum_{(i_1,\ldots,i_M)\in\pi_M}\frac{\sum\limits_{j=1}^{s}\lambda_jp_{(i_1\ldots,i_M)}^{(j)}}{\prod_{\ell=1}^M\binom{m_{\ell}}{i_{\ell}}}\\
&=&\sum_{j=1}^{s}
\left(\lambda_j\sum_{(i_1,\ldots,i_M)\in\pi_M}\frac{p_{(i_1\ldots,i_M)}^{(j)}}{\prod_{\ell=1}^M\binom{m_{\ell}}{i_{\ell}}}\right)\\
&=&C\sum_{j=1}^s\lambda_j=C,
\end{eqnarray*}
and $\FF$ is homogeneous by Theorem~\ref{ujBLYM=}.
\end{proof}

We state some simple results for the case when all parameters $L_P=1$.
\begin{theorem} Consider the (simple) $M$-part families such that
for all $E,F\in\FF$, if $E\ne F$ then there is a $j\in[M]$ such that $E\cap X_j\ne F\cap X_j$. If $\FF$ is maximum size among these families, then
\begin{equation}\label{maxk=M}
|\FF|=\prod_{i=1}^M\binom{m_i}{\lfloor m_i/2\rfloor}.
\end{equation}
$\FF$ is a maximum size homogeneous family precisely when 
$\Pp(\FF)=S((\ell_1,\ldots,\ell_M))$ where for each $i\in[M]$, $\ell_i\in\{\lfloor m_i/2\rfloor,\lceil m_i/2\rceil\}$. In particular, when all $m_i$ are even, the maximum size family is unique and homogeneous.
\end{theorem}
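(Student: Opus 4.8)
The statement concerns simple $M$-part families that are $M$-dimensional $M$-part Sperner families with all $L_P=1$ (the condition ``$E\neq F$ implies $E\cap X_j\neq F\cap X_j$ for some $j$'' is exactly the $k=M$ Sperner condition with $L_{[M]}=1$, since a chain of length $2$ in the product $\prod_{j=1}^M\CC_j$ with all traces fixed outside the empty set would force two distinct sets agreeing on every block). The plan is to separate the two assertions: first the numerical value \eqref{maxk=M}, then the characterization of the homogeneous optima.

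\emph{Step 1: The upper bound.} Apply Theorem~\ref{ujBLYM} with $k=M$ (so $\pi_M$ itself, and the single set $P=[M]$, $\prod_{j\notin P}n_j=1$): every such $\FF$ satisfies $\sum_{E\in\FF}\prod_{i=1}^M\binom{m_i}{|E\cap X_i|}^{-1}\le 1$. Since each term is at least $\prod_i\binom{m_i}{\lfloor m_i/2\rfloor}^{-1}$ (because $\binom{m_i}{j}\le\binom{m_i}{\lfloor m_i/2\rfloor}$ for all $j$), we get $|\FF|\le\prod_i\binom{m_i}{\lfloor m_i/2\rfloor}$.

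\emph{Step 2: The construction / lower bound.} Take $\FF=\prod_{i=1}^M\binom{X_i}{\ell_i}$ with $\ell_i\in\{\lfloor m_i/2\rfloor,\lceil m_i/2\rceil\}$; this is homogeneous with profile matrix $S((\ell_1,\dots,\ell_M))$, has size $\prod_i\binom{m_i}{\ell_i}=\prod_i\binom{m_i}{\lfloor m_i/2\rfloor}$, and is easily checked to satisfy the $k=M$ Sperner condition with $L_{[M]}=1$ (two distinct sets in the product differ in some block). This proves \eqref{maxk=M} and shows these families are maximum size.

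\emph{Step 3: Characterizing the homogeneous optima.} A homogeneous maximum size family has profile matrix $S(I)$ for a multiset (here a set, by simplicity) $I\Subset\pi_M$, and by Proposition~\ref{elemi} / the discussion preceding Theorem~\ref{ujBLYM} this $I$ is an $M$-dimensional simple transversal with $L_{[M]}=1$, i.e.\ a single point $I=\{(\ell_1,\dots,\ell_M)\}$ (the $k=M$, $L=1$ transversal condition says the whole multiset has size $\le 1$). Then $|\FF|=\prod_i\binom{m_i}{\ell_i}$, and this equals $\prod_i\binom{m_i}{\lfloor m_i/2\rfloor}$ iff $\binom{m_i}{\ell_i}=\binom{m_i}{\lfloor m_i/2\rfloor}$ for every $i$, iff $\ell_i\in\{\lfloor m_i/2\rfloor,\lceil m_i/2\rceil\}$ for every $i$. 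When all $m_i$ are even, $\lfloor m_i/2\rfloor=\lceil m_i/2\rceil$, so $I$ is forced to a single point and $\FF=\prod_i\binom{X_i}{m_i/2}$ is the unique homogeneous maximum size family; to see it is the \emph{unique} maximum size family (not merely the unique homogeneous one), invoke Theorem~\ref{genhom} (applicable since the hypothesis ``every maximum size homogeneous family satisfies \eqref{allBLYM} with equality'' holds: from Step 3 every homogeneous optimum is a single full level product, whose BLYM sum is exactly $1$), concluding every maximum size family is homogeneous, hence equals the unique one.

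The only mildly delicate point is verifying that the $k=M$, $L_{[M]}=1$ Sperner condition coincides with the stated ``distinguishing'' condition, and that an $M$-dimensional transversal with $L_{[M]}=1$ is a single point rather than anything larger — both are unwinding the definitions in Section~3 with $P=[M]$, so there is no real obstacle; the work is bookkeeping. One should also double-check that Theorem~\ref{genhom} is being applied with $k=M=1$ excluded — but here we only use it in the all-$m_i$-even case where we separately want full uniqueness, and its hypothesis is genuinely met, so this is fine for $M\ge 1$ via the $k<M$ clause when $M\ge 2$ and trivially when $M=1$.
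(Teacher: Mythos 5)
Your Step~1 and Step~2 are fine and in fact give a cleaner route to the formula~\eqref{maxk=M} than the paper does: the paper obtains the value by first invoking Theorem~\ref{convex} to guarantee a homogeneous maximum-size family and then computing its size, whereas you extract the upper bound directly from the $k=M$ BLYM inequality and exhibit a matching construction. Step~3 (classification of homogeneous optima as single-point transversals) also matches the paper's argument.

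There is, however, a genuine gap in Step~4. You invoke Theorem~\ref{genhom} to conclude that every maximum-size family is homogeneous, but Theorem~\ref{genhom} is explicitly restricted to ``$1\le k<M$ or $k=M=1$,'' and here $k=M$, which for $M\ge 2$ falls outside its scope. Your closing remark that this is ``fine via the $k<M$ clause when $M\ge 2$'' is not correct: $k$ equals $M$, it is not less than $M$. The exclusion of $k=M\ge 2$ in Theorem~\ref{genhom} is not cosmetic --- it is forced by Example~\ref{example1}, which shows equality in~\eqref{allBLYM} need not imply homogeneity when $k=M\ge 2$. What actually rescues uniqueness in the all-$m_i$-even case (and is what the paper uses) is Theorem~\ref{convex} itself: the profile matrix of any maximum-size family is a convex combination of profile matrices of homogeneous maximum-size families, and since there is a \emph{unique} homogeneous optimum, the profile matrix of any optimum must be $S((m_1/2,\ldots,m_M/2))$, which forces $\FF$ to contain every set with profile vector $(m_1/2,\ldots,m_M/2)$ and hence pins down $\FF$ uniquely. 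You should replace the Theorem~\ref{genhom} appeal with this convex-hull argument.

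A minor remark on the preamble: your parenthetical explanation of why the stated ``$E\cap X_j\ne F\cap X_j$'' condition is the $k=M$, $L_{[M]}=1$ Sperner condition is muddled (and the condition as literally written is vacuous when $X=\biguplus X_i$, since $E\ne F$ already gives $E\cap X_j\ne F\cap X_j$ for some $j$). The condition that is actually meant, and that both you and the paper use downstream, is that for distinct $E,F\in\FF$ some trace pair $E\cap X_j$, $F\cap X_j$ is \emph{incomparable}. It is worth flagging explicitly that this is the intended reading.
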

\begin{proof} A family $\GG$ satisfies the conditions precisely when it is a $M$-dimensional $M$-part Sperner family with $L=1$.
By Theorem~\ref{convex}, there are homogeneous families among such maximum size families. So let $\FF$ be a homogeneous
maximum size family. Then $\Pp(\FF)=S(I)$ for some $I\subseteq\pi_M$. It follows from the conditions that
$|I|=1$, so $I=\{(i_1,\ldots,i_M)\}$ and $|\FF|=\prod_{j=1}^{M}\binom{m_j}{i_j}$. (\ref{maxk=M}) follows, moreover the homogeneous
maximum size families are precisely the ones listed in the theorem.

Since by Theorem~\ref{convex} the profile matrix
$\Pp(\FF)$ is the convex combination of the profile matrices
of maximum size families, it follows that for $m_i$ even the maximum size family is unique.
\end{proof}
Note that it is easy to create a nonhomogeneous maximum size family when at least one of the $m_i$ is odd along the lines of Example~\ref{example1}.

For the next result we will use the following, which follows easily by induction on $K$.
\begin{lemma}\label{nagyot} 
Let $K,M$ be positive integers and for each $i\in[K]$ and $j\in[M]$ let $a_{ij}$ be nonnegative reals such that
$a_{1j}\ge a_{2j}\ge\cdots\ge a_{K,j}$ and $S_K$ denotes the set of permutations on $[K]$. Then
$$
\max\{\sum_{\ell=1}^{K}\prod_{j=1}^{M} a_{\pi_j(\ell),j}:\forall j\in[M]\,\, \pi_j\in S_K\}=\sum_{\ell=1}^{K}\prod_{j=1}^{M}a_{\ell j}.
$$
\end{lemma}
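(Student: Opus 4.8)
The plan is to prove Lemma~\ref{nagyot} by induction on $K$, exactly as the paper suggests.

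\textbf{Base case.} For $K=1$ there is nothing to prove: there is a unique permutation in $S_1$, and the claimed maximum equals $\prod_{j=1}^M a_{1j}$.

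\textbf{Inductive step.} Assume $K\ge 2$ and that the statement holds for $K-1$. Fix permutations $\pi_1,\ldots,\pi_M\in S_K$ achieving the maximum on the left-hand side; call the value $V$. First I would argue that, without loss of generality, the index $\ell=1$ already contributes the term $\prod_{j=1}^M a_{1j}$, i.e.\ that we may assume $\pi_j(1)=1$ for all $j$ at the optimum. To see this, let $\ell^\star$ be the index with $\pi_1(\ell^\star)=1$ (so the first factor $a_{1,1}$ is attached to summand $\ell^\star$). The key step is a \emph{swap argument}: compare the given assignment with the one obtained by exchanging the roles of summands $1$ and $\ell^\star$ in every permutation $\pi_j$ (equivalently, composing each $\pi_j$ with the transposition $(1\,\ell^\star)$). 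This leaves $V$ unchanged, and now $\pi_1(1)=1$. Iterating a similar exchange for the remaining coordinates, I would show one can reach an optimal configuration in which the summand indexed $1$ carries all the top entries $a_{1,1},\ldots,a_{1,M}$. The clean way to organize this is: among all optimal choices of $(\pi_1,\ldots,\pi_M)$, pick one maximizing the number of coordinates $j$ with $\pi_j(1)=1$; if some coordinate $j_0$ has $\pi_{j_0}(1)=r\ne 1$, let $s$ be the summand with $\pi_{j_0}(s)=1$, and swap the values at positions $1$ and $s$ within $\pi_{j_0}$ only; a short computation using $a_{1,j_0}\ge a_{r,j_0}$ and the fact that the other $M-1$ factors are unchanged shows the sum does not decrease, while the count of coordinates with $\pi_j(1)=1$ strictly increases, contradicting maximality. (This is the one genuinely non-routine point, and the swap inequality $a_{1,j_0}\,x + a_{r,j_0}\,y \ge a_{r,j_0}\,x + a_{1,j_0}\,y$ for $x\ge y\ge 0$ is what makes it work; here $x,y$ are the products of the remaining factors on the two summands.)

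\textbf{Conclusion from the reduction.} Once $\pi_j(1)=1$ for all $j$, the first summand equals $\prod_{j=1}^M a_{1j}$, and the remaining sum $\sum_{\ell=2}^K \prod_{j=1}^M a_{\pi_j(\ell),j}$ is of exactly the same form with $K$ replaced by $K-1$, applied to the matrix $(a_{ij})_{2\le i\le K,\ 1\le j\le M}$, whose columns are still sorted in nonincreasing order. By the induction hypothesis this remaining sum is at most $\sum_{\ell=2}^K \prod_{j=1}^M a_{\ell j}$, with equality attained by the identity permutations. Adding back $\prod_{j=1}^M a_{1j}$ gives $V\le \sum_{\ell=1}^K\prod_{j=1}^M a_{\ell j}$, and the reverse inequality is immediate by taking all $\pi_j$ equal to the identity. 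Hence equality holds and the lemma follows.

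\textbf{Main obstacle.} The only real care needed is in the swap argument: one must verify that exchanging two values inside a \emph{single} permutation $\pi_{j_0}$ never decreases the total, which rests on the sortedness of the $j_0$-th column together with the nonnegativity of all entries (so that the ``other-factors'' coefficients $x,y$ are nonnegative). Everything else — the base case, the descent to $K-1$, and the matching lower bound — is routine.
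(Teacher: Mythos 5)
Your high-level plan — induction on $K$ after reducing to the case $\pi_j(1)=1$ for all $j$, then peeling off the first summand — is the intended argument, but the swap step as written has a genuine gap. With $\pi_{j_0}(1)=r\ne 1$ and $\pi_{j_0}(s)=1$, the exchange inside $\pi_{j_0}$ changes the total by $(a_{1,j_0}-a_{r,j_0})(x-y)$, where $x=\prod_{j\ne j_0}a_{\pi_j(1),j}$ and $y=\prod_{j\ne j_0}a_{\pi_j(s),j}$; this is nonnegative precisely when $x\ge y$ (or $a_{1,j_0}=a_{r,j_0}$). You state the swap inequality $a_{1,j_0}x+a_{r,j_0}y\ge a_{r,j_0}x+a_{1,j_0}y$ ``for $x\ge y\ge 0$'' but never establish $x\ge y$, and in your closing paragraph you even attribute the step to mere nonnegativity of the ``other-factors'' coefficients, which is not sufficient. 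Nothing about choosing an optimal configuration that maximizes $\#\{j:\pi_j(1)=1\}$ forces $x\ge y$, so the intended contradiction does not follow from what you wrote.

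The repair is to pick a different extremal representative. Relabel the summands — compose every $\pi_j$ with one and the same permutation of $[K]$, which leaves the total unchanged — so that the first summand $\prod_j a_{\pi_j(1),j}$ is the largest of the $K$ summands. Then ``summand $1\ge$ summand $s$'' reads $a_{r,j_0}x\ge a_{1,j_0}y$, and together with $a_{1,j_0}\ge a_{r,j_0}\ge 0$ this forces $x\ge y$ (if $a_{r,j_0}>0$ divide; if $a_{r,j_0}=0$ then $a_{1,j_0}y\le 0$, so either $y=0\le x$ or the entire $j_0$-th column vanishes and the lemma is trivial). The swap then does not decrease the total, and a direct check shows that after the swap summand $1$ is still the largest, so one can iterate over all offending $j_0$ until $\pi_j(1)=1$ for every $j$. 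This version needs no appeal to optimality at all: it shows that every configuration is dominated by one with $\pi_j(1)=1$ for all $j$, which is exactly what the inductive step needs. Your base case, the descent to $K-1$ on the still-sorted remaining matrix, and the matching lower bound are all fine.
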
\hfill\qed

\begin{theorem} Assume that $m_M=\min m_i$ and consider the $(M-1)$-dimensional $M$-part
Sperner families with parameters $L_{[M]\setminus\{i\}}=1:i\in[M]$. If $\FF$ is of maximum size amongst these families, then
$$
|\FF|=\sum_{i=0}^{m_M}\prod_{j=1}^M\binom{m_j}{\lceil\frac{m_j}{2}\rceil+(-1)^i\lceil\frac{i}{2}\rceil}.
$$
Moreover, if $\FF$ is a maximum size homogeneous family, then $\Pp(\FF)=S(I)$ for some
$
I=\{(b_{i1},\ldots,b_{iM}):i\in[n_M]^{\star}\}$ where for each fixed $j\in[M]$ the $b_{ij}$ are $n_M$ different integers
from $[n_j]^{\star}$ such that $\binom{m_j}{b_{ij}}=\binom{m_j}{\lceil\frac{m_j}{2}\rceil+(-1)^i\lceil\frac{i}{2}\rceil}$.

If in addition $m_1=\ldots=m_M$, then all maximum size families are homogeneous.
\end{theorem}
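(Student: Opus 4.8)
The plan is to reduce to the homogeneous case by the convex hull machinery of Section~\ref{hull} and then solve a finite optimization. Since here $k=M-1<M$, Theorem~\ref{convex} (equivalently Theorem~\ref{hullappl}) guarantees that among the maximum size families there is a homogeneous one, so it suffices to determine the maximum size of a homogeneous such family (and, for the very last assertion, to invoke Theorem~\ref{genhom}). A homogeneous $\FF$ has $\Pp(\FF)=S(I)$ for a simple $(M-1)$-dimensional transversal $I\subseteq\pi_M$ with all parameters $L_{[M]\setminus\{i\}}=1$, and then $|\FF|=\sum_{\vec i\in I}\prod_{j=1}^{M}\binom{m_j}{i_j}$. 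First I would unpack the transversal condition~(\ref{korlat}): fixing the $i$-th coordinate to any value $b$, at most one element of $I$ takes that value, for every $i$; equivalently, any two distinct points of $I$ differ in every coordinate. Hence, writing $s:=|I|$, the points of $I$ use $s$ distinct values in each coordinate $j$, so $s\le n_j$ for every $j$, and therefore $s\le\min_j n_j=n_M=m_M+1$.

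Next, for a fixed $s$ I would maximize $\sum_{\vec i\in I}\prod_j\binom{m_j}{i_j}$ over admissible $I$ with $|I|=s$. Write $I=\{p^{(1)},\ldots,p^{(s)}\}$; in each coordinate $j$ the $s$ numbers $\binom{m_j}{p^{(\ell)}_j}$ are attained at distinct arguments, so, after ranking them, they are bounded term-by-term by the $s$ largest binomial coefficients $\beta_{1,j}\ge\cdots\ge\beta_{s,j}$ of the row $\bigl(\binom{m_j}{0},\ldots,\binom{m_j}{m_j}\bigr)$. A short check shows $\beta_{\ell,j}=\binom{m_j}{\lceil m_j/2\rceil+(-1)^{\ell-1}\lceil(\ell-1)/2\rceil}$, because the zigzag $\ell\mapsto\lceil m_j/2\rceil+(-1)^{\ell-1}\lceil(\ell-1)/2\rceil$ enumerates $[n_j]^{\star}$ in non-increasing order of binomial coefficient, and since $m_M=\min_i m_i$ and $\ell-1\le m_M$ its values stay inside $[0,m_j]$ and are pairwise distinct. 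Plugging this into Lemma~\ref{nagyot} with $K=s$ and $a_{\ell j}=\beta_{\ell,j}$ caps the sum of products: $\sum_{\ell}\prod_j\binom{m_j}{p^{(\ell)}_j}\le\sum_{\ell}\prod_j\beta_{\ell,j}=\sum_{i=0}^{s-1}\prod_j\binom{m_j}{\lceil m_j/2\rceil+(-1)^{i}\lceil i/2\rceil}$, with the maximizing configuration the ``aligned'' one, where the $\ell$-th point carries the $\ell$-th largest binomial in every coordinate. Every summand here is a positive product of binomials, so this quantity strictly increases with $s$; hence the overall maximum is at $s=n_M=m_M+1$, which gives the claimed formula, and taking $b_{ij}=\lceil m_j/2\rceil+(-1)^{i}\lceil i/2\rceil$ realizes it.

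For the characterization of the homogeneous maxima, I would rerun the above chain of inequalities insisting on equality: this forces $s=n_M$, forces $I$ to use (in each coordinate) exactly the $n_M$ arguments attaining the $n_M$ largest binomial coefficients, and — by the equality analysis implicit in the inductive proof of Lemma~\ref{nagyot} — forces the points of $I$ to be aligned up to permutations within blocks of equal binomial value. Relabelling the points by $i\in[n_M]^{\star}$, this is precisely $I=\{(b_{i1},\ldots,b_{iM}):i\in[n_M]^{\star}\}$ with $\binom{m_j}{b_{ij}}=\binom{m_j}{\lceil m_j/2\rceil+(-1)^{i}\lceil i/2\rceil}$ and, for each fixed $j$, the $b_{ij}$ pairwise distinct, as stated. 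Finally, when $m_1=\cdots=m_M=m$, every maximum size homogeneous family has $|I|=n_M=m+1$, so its left-hand side in~(\ref{allBLYM}) equals $\sum_{\vec i\in I}1=m+1$, which is exactly the common right-hand side $\frac{L_P}{\prod_{j\in P}n_j}\prod_j n_j=m+1$ of every inequality in~(\ref{allBLYM}); thus all maximum size homogeneous families meet~(\ref{allBLYM}) with equality, and Theorem~\ref{genhom} (applicable since $1\le k=M-1<M$) yields that every maximum size family is homogeneous.

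The step I expect to be the main obstacle is the equality bookkeeping in the third paragraph: one must track how ties among binomial coefficients (for instance $\binom{m_j}{c-t}=\binom{m_j}{c+t}$) generate exactly the non-uniqueness encoded by prescribing the \emph{binomial value} of $b_{ij}$ rather than $b_{ij}$ itself, and one must phrase the reduction to Lemma~\ref{nagyot} cleanly enough that its (unstated but easily extracted) equality case can be quoted; by contrast, the inequality direction and the passage to homogeneous families are routine applications of the results already in the paper.
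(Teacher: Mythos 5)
Your proposal follows essentially the same route as the paper's proof: reduce to homogeneous families via Theorem~\ref{convex}, observe that the transversal structure forces distinct points of $I$ to differ in every coordinate (so $|I|\le n_M$), bound the weighted sum via Lemma~\ref{nagyot} against the zigzag enumeration of binomial coefficients, and invoke Theorem~\ref{genhom} for the final homogeneity claim. The concern you flag about the equality case of Lemma~\ref{nagyot} (needed to get the "aligned up to binomial-value ties" characterization rather than just the maximum value) is a real lacuna, but it is one the paper itself leaves implicit, so your treatment is if anything more careful.
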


\begin{proof} Theorem~\ref{convex} implies that amongst the maximum size families there are homogeneous ones. Let
$\FF$ be a (not necessarily maximum size)
homogeneous $(M-1)$-dimensional $M$-part Sperner family with all parameters $1$, and let $I$
be the transversal for which $\Pp(\FF)=S(I)$. Then if $\vec{i}=(i_1,\ldots,i_M)$ and 
$\vec{i}^{\prime}=(i^{\prime}_1,\ldots,i^{\prime}_M)$ are
elements of $I$ such that for some $\ell\in[M]$ $i_{\ell}=i^{\prime}_{\ell}$, we must have that $\vec{i}=\vec{i}^{\prime}$.
Therefore there is a $K\le n_M$ such that
$I=\{(b_{i1},\ldots,b_{iM}):i\in[K]^{\star}\}$ where for each fixed $j\in[M]$ the $b_{ij}$ are $n_M$ different integers
from $[n_j]^{\star}$ and $|\FF|=\sum_{\ell=1}^K\prod_{j=1}^M\binom{m_j}{b_{\ell j}}$. The statement about maximum size homogeneous families follows from 
Lemma~\ref{nagyot} and the fact that
$$\binom{m_j}{\lceil\frac{m_j}{2}\rceil+(-1)^0\lceil\frac{0}{2}\rceil}\ge \binom{m_j}{\lceil\frac{m_j}{2}\rceil+(-1)^1\lceil\frac{1}{2}\rceil}
\ge\cdots\ge \binom{m_j}{\lceil\frac{m_j}{2}\rceil+(-1)^{m_j}\lceil\frac{m_j}{2}\rceil}.$$
The rest follows from Theorem~\ref{genhom}.
\end{proof}

\section{New $k$-dimensional transversals and mixed orthogonal arrays}
\label{newMOA}
 Aydinian,  Czabarka,   Engel, P. L. Erd\H{o}s, and  Sz\'ekely \cite{acees} ran into
 MOAs  as they faced the problem of  constructing 1-dimensional full transversals
 for $M>2$. Using the indicator function of the $k$-dimensional  transversal in
 (\ref{korlat}) instead of the transversal itself, it is easy to see that the existence of "fractional full $k$-dimensional  transversal" is trivial. Therefore the construction
 problem of full $k$-dimensional transversals
 is a problem of integer programming.
 For $M=2$, such construction was found \cite{fgos}
 using matching theory, which
 does not apply for $M>2$. \cite{acees} observed Proposition \ref{elemi} for $k=1$ (the property "simple" was assumed tacitly) and constructed
 1-dimensional full transversals for any parameter set,
 and infinitely many MOAs with constraint $M$ and strength $M-1$.
The key element of the construction was the  elementary
 Lemma~\ref{1dim}, which only uses properties of 
the fractional part
$\fracpart{x}=x-\lfloor x\rfloor$ function
 of a real number $x$.   This lemma will
be heavily used again in this paper.

\begin{lemma}\label{1dim} {\rm [Engel's Lemma.]} Let $n$ be a positive integer, $\mu,\alpha,\beta$ be real numbers
such that $0<\mu$ and $0\le\beta\le 1-\mu$. Then
\begin{equation*}
\Bigg\vert\left\{i\in[n]^{\star}: \fracpart{\alpha+\frac{i}{n}}\in[\beta,\beta+\mu)\}
\right\}\Bigg\vert
\in\{\lfloor \mu n\rfloor, \lceil\mu n\rceil\}.
\end{equation*}
\end{lemma}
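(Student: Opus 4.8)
The plan is to count the elements of the set by tracking, as $i$ runs through $0,1,\ldots,n-1$, how often the point $\fracpart{\alpha+i/n}$ lands in the half-open window $[\beta,\beta+\mu)$ on the circle $\mathbb{R}/\mathbb{Z}$. First I would observe that the $n$ points $\fracpart{\alpha+i/n}$ for $i\in[n]^{\star}$ are exactly the $n$ points of the arithmetic progression with common difference $1/n$ started at $\fracpart{\alpha}$, and these are precisely $n$ equally spaced points on the unit circle — one point in each of the $n$ arcs $[\,(t+\{\alpha n\})/n,\ (t+1+\{\alpha n\})/n\,)$ for $t=0,\ldots,n-1$, or more simply: the set $\{\fracpart{\alpha+i/n}:i\in[n]^\star\}$ consists of one representative in each fundamental interval of length $1/n$ of the partition of $[0,1)$ determined by the offset $\fracpart{\alpha}$. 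Consequently the count we want is the number of these equally spaced points falling in an arc of length $\mu$.

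The key step is then the elementary fact that an arc of length $\mu$ on the circle of circumference $1$ contains either $\lfloor \mu n\rfloor$ or $\lceil \mu n\rceil$ of $n$ equally spaced points. To see this, write the window as $[\beta,\beta+\mu)$; since $0\le\beta\le 1-\mu$ this is a genuine subinterval of $[0,1)$ and no wraparound occurs. The equally spaced points are $\fracpart{\alpha}+j/n \pmod 1$; sliding the window continuously, the count changes by $\pm 1$ each time an endpoint crosses a lattice point, and the count as a function of the left endpoint $\beta$ takes only the two consecutive integer values $\lfloor\mu n\rfloor$ and $\lceil\mu n\rceil$ — indeed, partition $[0,1)$ into the $n$ blocks $[j/n,(j+1)/n)$; the window $[\beta,\beta+\mu)$ meets a block either in a sub-interval of positive length or not at all, it fully contains $\lfloor\mu n\rfloor$ of the $1/n$-blocks at minimum and touches at most $\lceil\mu n\rceil+1$ of them, and a short case check on the two boundary blocks (the one containing $\beta$ and the one containing $\beta+\mu$) shows each contributes $0$ or $1$ point with the totals forced into $\{\lfloor\mu n\rfloor,\lceil\mu n\rceil\}$. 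Equivalently, and perhaps cleanest to write up: the number of $j\in\mathbb{Z}$ with $\fracpart{\alpha}+j/n\in[\beta,\beta+\mu)\pmod 1$ equals $\#\{j: j\in[n(\beta-\fracpart{\alpha}),\ n(\beta-\fracpart{\alpha})+n\mu)\pmod n\}$, and the number of integers in a half-open real interval of length $n\mu$ is always $\lfloor n\mu\rfloor$ or $\lceil n\mu\rceil$.

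I expect the only real subtlety to be bookkeeping the wraparound: one must use the hypothesis $0\le\beta\le 1-\mu$ to ensure $[\beta,\beta+\mu)\subseteq[0,1)$ so that "land in the window" is unambiguous, and one must be careful that the points $\fracpart{\alpha+i/n}$ are genuinely $n$ distinct equally spaced points (they are, because consecutive ones differ by exactly $1/n$ mod $1$ and $n\cdot(1/n)=1$). Modulo that care, the statement reduces to the one-line fact that a half-open interval of length $L$ contains $\lfloor L\rfloor$ or $\lceil L\rceil$ integers, applied with $L=n\mu$; so the proof should be short.
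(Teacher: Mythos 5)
The paper itself does not prove Lemma~\ref{1dim}: it is attributed to Engel and imported from \cite{acees} as a known fact (the paper only proves its generalization, Lemma~\ref{Engelprime}, in Section~\ref{MOAproofs}, and even there it uses Lemma~\ref{1dim} as a black box). So there is no in-paper proof to compare your argument against. On its own merits, your argument is correct and complete in outline: the condition $\fracpart{\alpha+i/n}\in[\beta,\beta+\mu)$ for $i\in[n]^{\star}$ is equivalent, after unwinding the fractional part, to $i$ falling (modulo $n$) in a half-open real interval of length $n\mu\le n$, and since any $n$ consecutive integers hit each residue class exactly once, this count equals the number of integers in a single half-open interval of length $n\mu$, which is $\lceil c+n\mu\rceil-\lceil c\rceil\in\{\lfloor n\mu\rfloor,\lceil n\mu\rceil\}$. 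The one spot worth tightening in a write-up is the phrase ``no wraparound occurs'': the hypothesis $0\le\beta\le1-\mu$ guarantees the \emph{target} window $[\beta,\beta+\mu)$ sits inside $[0,1)$, but the interval $[n(\beta-\fracpart{\alpha}),\,n(\beta-\fracpart{\alpha})+n\mu)$ in which you count $i$ can still straddle $0$; the clean way to dispose of this is exactly the observation you make at the end, namely that you are counting residues mod $n$ in a half-open interval of length at most $n$, and translating that interval by a multiple of $n$ does not change the residue count.
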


All our constructions for full $k$-dimensional transversals and simple MOAs are based on the following construction.
\begin{construction} For $n_1,\ldots,n_M$ positive integers,  $0<\mu\le 1$  real,  and $0\leq \beta\le 1-\mu$, define
\begin{equation}
\mathbb{C}(n_1,\ldots,n_M;\beta,\mu):=
\left\{
(i_1,\ldots,i_M)\in\pi_M:\fracpart{\sum_{j=1}^M\frac{i_j}{n_j}}\in[\beta,\beta+\mu)
\right\}.\label{construction}
\end{equation}
\end{construction}
For the case $k=1$, \cite{acees} showed that for any
 $i\in[M]$, any $L_i\in[n_i]$, 
any $0<\mu\le\min\{\frac{L_i}{n_i}: i\in[M]\}$, any  $0\leq \beta\leq 1-\mu$, the
construction in (\ref{construction}) is a $1$-dimensional transversal for the given
parameters, moreover, if $\mu=\min\{\frac{L_i}{n_i}: i\in[M]\}$, then this $1$-dimensional
 transversal is full.

The following facts are almost immediate from the construction:
\begin{proposition}\label{egyszeru} Let $n_1,\ldots,n_M$ be positive integers, $k\in[M]$, and $\{L_P:P\in\binom{[M]}{k}\}$
be given such that $1\le L_P\le \prod_{i\in P} n_i$ are integers. If there is a $0<\mu_0\le 1$
such that for each $0\le \beta\le 1-\mu_0$ the construction
$\mathbb{C}(n_1,\ldots,n_M;\beta,\mu_0)$ is a
full $k$-dimensional  transversal with these $L_P$ parameters, then
\begin{enumerate}[{\rm (i)}]
\item $\mathbb{C}(n_1,\ldots,n_M;\beta,\mu)$ is a $k$-dimensional transversal with these
parameters for every $0<\mu<\mu_0$ and $0\le\beta\le 1-\mu$.
\item $\pi_M$ can be partitioned into $\lceil\frac{1}{\mu_0}\rceil$ $k$-dimensional
transversals with these parameters, and $\lfloor\frac{1}{\mu_0}\rfloor$ of these are full.
\item With $\alpha=\min\left\{\frac{L_P}{\prod_{i\in P} n_i}: P\in\binom{[M]}{k}\right\}$, we have $\lfloor\frac{1}{\alpha}\rfloor\le \frac{1}{\mu_0}\le\lceil\frac{1}{\alpha}\rceil$. In particular, if $\frac{1}{\alpha}$ is an integer, all $k$-dimensional
transversals in the partition in (ii) are full.
\end{enumerate}
\end{proposition}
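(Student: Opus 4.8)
The plan is to establish the three parts in order, after recording two elementary facts. \emph{(A)} Any sub-multiset of a $k$-dimensional multi-transversal with parameters $\{L_P:P\in\binom{[M]}{k}\}$ is again such a multi-transversal; this is immediate from~(\ref{korlat}), since passing to a sub-multiset and fixing coordinates can only decrease the relevant cardinalities. \emph{(B)} Every full $k$-dimensional multi-transversal has size exactly $N\alpha$, where $N=\prod_{j=1}^{M}n_j$ and $\alpha=\min_{P}\frac{L_P}{\prod_{j\in P}n_j}$: by~(\ref{korlat2}) its size is at most $L_P\prod_{j\notin P}n_j$ for every $P$, and fullness makes one of these an equality, so the size equals $\min_{P}L_P\prod_{j\notin P}n_j=N\alpha$.

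For~(i), fix $0<\mu<\mu_0$ and $0\le\beta\le 1-\mu$. If $\beta\le 1-\mu_0$ then $[\beta,\beta+\mu)\subseteq[\beta,\beta+\mu_0)$, so $\mathbb{C}(n_1,\ldots,n_M;\beta,\mu)\subseteq\mathbb{C}(n_1,\ldots,n_M;\beta,\mu_0)$, which is a transversal by hypothesis, and \emph{(A)} finishes it. If $\beta>1-\mu_0$ then, using $\beta+\mu\le 1$, we get $[\beta,\beta+\mu)\subseteq[1-\mu_0,1)$, hence $\mathbb{C}(n_1,\ldots,n_M;\beta,\mu)\subseteq\mathbb{C}(n_1,\ldots,n_M;1-\mu_0,\mu_0)$, again a transversal by hypothesis, and \emph{(A)} applies. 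For~(ii), put $q=\lfloor 1/\mu_0\rfloor\ge 1$ and partition $[0,1)$ into the $q$ intervals $[j\mu_0,(j+1)\mu_0)$, $j=0,\ldots,q-1$, together with the leftover $[q\mu_0,1)$ (which is nonempty precisely when $1/\mu_0\notin\mathbb{Z}$); this is a partition into $\lceil 1/\mu_0\rceil$ pieces, and the corresponding sets $\mathbb{C}(n_1,\ldots,n_M;\cdot,\cdot)$ partition $\pi_M$. Each of the $q$ full-width pieces has left endpoint $j\mu_0\le(q-1)\mu_0\le 1-\mu_0$, so it is a full transversal by hypothesis; the leftover piece has width $1-q\mu_0<\mu_0$ and left endpoint $q\mu_0=1-(1-q\mu_0)$, so it is a $k$-dimensional transversal by part~(i). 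This gives $\lceil 1/\mu_0\rceil$ transversals, of which $\lfloor 1/\mu_0\rfloor$ are full.

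Part~(iii) carries the real content, and I expect the main obstacle to be that the naive packing and covering estimates coming from~(ii) only yield $\lfloor 1/\mu_0\rfloor\le 1/\alpha\le\lceil 1/\mu_0\rceil$, which is strictly weaker than the asserted sandwich (it does not rule out $1/\alpha\in\mathbb{Z}$ with $1/\mu_0\notin\mathbb{Z}$). To get the stated bound I would prove the stronger statement $\mu_0=\alpha$, using the exact equidistribution of the multiset $\bmultiset{\fracpart{\sum_{j=1}^{M}\frac{i_j}{n_j}}:(i_1,\ldots,i_M)\in\pi_M}$. With $L=\lcm(n_1,\ldots,n_M)$, the map $(i_1,\ldots,i_M)\mapsto\sum_{j=1}^{M}\frac{L}{n_j}i_j\bmod L$ is a well-defined group homomorphism $\prod_{j}\mathbb{Z}/n_j\mathbb{Z}\to\mathbb{Z}/L\mathbb{Z}$, and it is onto because $\gcd_{j}(L/n_j)=1$ (as $L$ is the least common multiple); hence each residue is attained by exactly $N/L$ tuples, so whenever $0\le\beta$ and $\beta+\mu_0\le 1$,
$$\bigl|\mathbb{C}(n_1,\ldots,n_M;\beta,\mu_0)\bigr|=\frac{N}{L}\cdot\bigl|\{\,k\in\mathbb{Z}:0\le k<L,\ \beta\le k/L<\beta+\mu_0\,\}\bigr|.$$
By \emph{(B)} and the hypothesis the left-hand side equals $N\alpha$ for every $\beta\in[0,1-\mu_0]$, so the integer $c(\beta):=\bigl|\{k\in\mathbb{Z}:0\le k<L,\ \beta\le k/L<\beta+\mu_0\}\bigr|$ is constant and equal to $\alpha L$. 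But $c(0)=\lceil\mu_0 L\rceil$ whereas $c(1-\mu_0)=\lfloor\mu_0 L\rfloor$; equality of these two forces $\mu_0 L\in\mathbb{Z}$, hence $\mu_0 L=\alpha L$, i.e. $\mu_0=\alpha$. Then $1/\mu_0=1/\alpha$ gives $\lfloor 1/\alpha\rfloor\le 1/\mu_0\le\lceil 1/\alpha\rceil$ at once, and when $1/\alpha=1/\mu_0$ is moreover an integer the partition in~(ii) has no leftover piece, so all $1/\mu_0$ of its transversals are full.
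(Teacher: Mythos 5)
Parts (i) and (ii) match the paper's argument essentially step for step (the paper uses the same containment of intervals for (i) and the same partition of $[0,1)$ for (ii)), and your bookkeeping of the leftover interval via part (i) is correct, including the degenerate case $1/\mu_0\in\mathbb{Z}$. Your part (iii), however, is a genuine improvement over what the paper sketches. The paper dismisses (iii) with ``follows from (\ref{korlat2}) and (ii),'' which, as you rightly observe, only yields the counting sandwich $\lfloor 1/\mu_0\rfloor\le 1/\alpha\le\lceil 1/\mu_0\rceil$; this does not logically imply $\lfloor 1/\alpha\rfloor\le 1/\mu_0\le\lceil 1/\alpha\rceil$ in the two boundary cases $1/\alpha=\lfloor 1/\mu_0\rfloor$ and $1/\alpha=\lceil 1/\mu_0\rceil$ (both possible only with $1/\alpha\in\mathbb{Z}$ and $1/\mu_0\notin\mathbb{Z}$). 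Your route instead establishes the sharper fact $\mu_0=\alpha$, from which (iii) and the final sentence are immediate. To do so you implicitly reprove the paper's Lemma~\ref{generaln} (each value $k/L$ of $\fracpart{\sum_j i_j/n_j}$ is attained exactly $N/L$ times), but via a cleaner group-homomorphism argument rather than the paper's induction; you then extract $\mu_0 L\in\mathbb{Z}$ and $\mu_0=\alpha$ by comparing the lattice-point counts $c(0)=\lceil\mu_0 L\rceil$ and $c(1-\mu_0)=\lfloor\mu_0 L\rfloor$. All of these computations check out, and the observation that $c(\beta)\equiv\alpha L$ forces $\alpha L\in\mathbb{Z}$ is used correctly. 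In short: the proposal is correct; it buys a more explicit conclusion ($\mu_0=\alpha$) and, importantly, it fills a small gap that the paper's one-line justification of (iii) leaves open.
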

\begin{proof} (i) follows from the fact that for every $\beta,\mu$ in (i), exists a
$0\leq \beta^{\prime}\leq 1-\mu_0$, such that $[\beta, \beta+\mu)\subseteq [\beta^{\prime}, \beta^{\prime}+\mu_0)$.
(Here we did not use  the fullness in the hypothesis.) For (ii), we use the fact that
$[0,1)$ can be partitioned into $\lceil\frac{1}{\mu_0}\rceil$ half-open intervals,
$\lfloor\frac{1}{\mu_0}\rfloor$ of which has length $\mu_0$. Finally, (iii) follows from
(\ref{korlat2}) and (ii).
\end{proof}

We arrived at the following generalization of Engel's lemma (Lemma~\ref{1dim}):
\begin{lemma}\label{Engelprime} Let $n_1,\ldots,n_k$ be positive integers,
$N=\lcm(n_1,\ldots,n_k)$, $K=\prod_{i=1}^{k}n_k$ and $\ell=\frac{K}{N}$. If
$\alpha,\beta,\mu$ are real numbers
with $0<\mu<1$ and $0\le \beta\le 1-\mu$, then
\begin{equation*}
\Bigg\vert\left\{(i_1,\ldots,i_k)\in\pi_M: \fracpart{\alpha+\sum_{j=1}^{k}\frac{i_j}{n_j}}\in[\beta,\beta+\mu)
\right\}\Bigg\vert
\in\left\{\ell\lfloor \mu N\rfloor,\ell\lceil\mu N\rceil\right\}.
\end{equation*}
\end{lemma}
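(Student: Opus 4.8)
The plan is to reduce the $k$-variable count to a one-variable count to which the original Engel Lemma (Lemma~\ref{1dim}) applies, using the arithmetic structure of $N=\lcm(n_1,\ldots,n_k)$. First I would observe that the map
$$\Phi:(i_1,\ldots,i_k)\mapsto \Bigl(\sum_{j=1}^k \frac{i_j}{n_j} \bmod 1\Bigr)\in\tfrac1N\mathbb Z/\mathbb Z$$
takes values in the cyclic group $\tfrac1N\mathbb Z/\mathbb Z$ of order $N$, since each $\tfrac{i_j}{n_j}$ has denominator dividing $N$. The key claim is that $\Phi$ is \emph{equidistributed}: every element of $\tfrac1N\mathbb Z/\mathbb Z$ has exactly $\ell=K/N$ preimages among the $K=\prod_{j=1}^k n_j$ tuples $(i_1,\ldots,i_k)\in\prod_j[n_j]^\star$. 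Granting this, the set we must count splits as a disjoint union over the residues $\tfrac{r}{N}\in[\beta,\beta+\mu)$ (after shifting by $\alpha$), each contributing exactly $\ell$ tuples, so the count equals $\ell\cdot\bigl|\{r\in[N]^\star:\fracpart{\alpha+\tfrac rN}\in[\beta,\beta+\mu)\}\bigr|$, and the inner quantity lies in $\{\lfloor\mu N\rfloor,\lceil\mu N\rceil\}$ by Lemma~\ref{1dim} applied with modulus $N$. This gives exactly the claimed membership in $\{\ell\lfloor\mu N\rfloor,\ell\lceil\mu N\rceil\}$.

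The substantive step is the equidistribution claim, and I expect this to be the main obstacle. I would prove it by induction on $k$, or more cleanly by a direct character-sum / counting argument. For the direct approach: the number of preimages of a fixed residue class is, by orthogonality of characters on $\mathbb Z/N\mathbb Z$,
$$\frac1N\sum_{t=0}^{N-1}\ \prod_{j=1}^k\ \sum_{i_j=0}^{n_j-1}\exp\!\Bigl(2\pi i\, t\bigl(\tfrac{i_j}{n_j}-\tfrac rN\bigr)\Bigr),$$
and the inner sum $\sum_{i_j=0}^{n_j-1}\exp(2\pi i\, t i_j/n_j)$ vanishes unless $n_j\mid t$, in which case it equals $n_j$. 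Since $N=\lcm(n_1,\ldots,n_k)$, all $n_j$ divide $t$ simultaneously only when $N\mid t$, i.e.\ $t=0$ in the range; so the only surviving term is $t=0$, giving $\frac1N\prod_j n_j=\ell$, independent of $r$. Alternatively, one can argue inductively: freezing $i_2,\ldots,i_k$, the values $\sum_{j\ge2}\tfrac{i_j}{n_j}+\tfrac{i_1}{n_1}$ as $i_1$ ranges over $[n_1]^\star$ hit $n_1$ equally-spaced points of step $\tfrac1{n_1}$, and one tracks how these interleave modulo $1$; the $\lcm$ condition is what forces the overall orbit structure to be uniform. I would present whichever is shorter in the final write-up, likely the character-sum version since it is self-contained.

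Two small points of care I would not skip. First, the statement writes $(i_1,\ldots,i_k)\in\pi_M$ but only the first $k$ coordinates appear; I would note (or silently use) that the count is over $(i_1,\ldots,i_k)\in\prod_{j=1}^k[n_j]^\star$, which is the intended reading. Second, to invoke Lemma~\ref{1dim} for the inner one-variable count I need $0<\mu$ and $0\le\beta\le 1-\mu$ with modulus $N$ in place of $n$; these hold by hypothesis, so the inner cardinality is in $\{\lfloor\mu N\rfloor,\lceil\mu N\rceil\}$ and the conclusion follows by multiplying through by $\ell$. No other estimates are needed; everything else is bookkeeping.
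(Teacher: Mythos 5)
Your overall plan is exactly the paper's: reduce the $k$-variable count to a one-variable count via an equidistribution claim, then apply Engel's Lemma~\ref{1dim} with modulus $N$. The equidistribution statement you identify as the substantive step is precisely Lemma~\ref{generaln} in the paper: every residue $\tfrac{r}{N}$ has exactly $\ell=K/N$ preimages in $\prod_j[n_j]^{\star}$ under $(i_1,\ldots,i_k)\mapsto\sum_j i_j/n_j \pmod 1$. Where you differ is in how you prove it. The paper does it by induction on $k$ (Lemma~\ref{generaln}), with a bijective base case for $k=2$ (Lemma~\ref{2n}) that constructs explicit translations between fibers $\DD_j$. You propose a direct character-sum argument, which is correct: the inner geometric sum $\sum_{i_j=0}^{n_j-1}e^{2\pi i\,t i_j/n_j}$ vanishes unless $n_j\mid t$, and since $N=\lcm(n_1,\ldots,n_k)$ the only $t\in[N]^{\star}$ divisible by all $n_j$ is $t=0$, leaving $\frac1N\prod_j n_j=\ell$ independent of $r$. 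This is shorter and avoids the two-lemma induction, at the mild cost of introducing complex exponentials into an otherwise elementary paper. One small slip: in your displayed orthogonality sum the phase $\exp(-2\pi i\,t r/N)$ appears inside the product over $j$, so it gets raised to the $k$th power rather than appearing once; the indicator for $\sum_j i_j/n_j\equiv r/N\pmod 1$ should be $\frac1N\sum_t e^{-2\pi i\,tr/N}\prod_j\sum_{i_j}e^{2\pi i\,ti_j/n_j}$. This does not affect the conclusion (the extra phase is $1$ at $t=0$, and for $t\neq0$ the product already vanishes), but it should be corrected in a write-up. Your other care points (the $\pi_M$-versus-$\pi_k$ typo in the statement, and checking Lemma~\ref{1dim}'s hypotheses with $n=N$) are exactly the right ones to flag.
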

The proof of Lemma~\ref{Engelprime} is postponed to Section~\ref{MOAproofs}.
Based on Lemma~\ref{Engelprime}, the following theorem gives a sufficient criterion
to use (\ref{construction}) to construct full $k$-dimensional
transversals. For $k=1$ it gives back the construction in \cite{acees}.

We set a generic
notation here for the rest of this section and Section~\ref{MOAproofs}.
Let us be given $n_1,\ldots,n_M\geq 1$ integers, a $k\in[M]$, and
for every
 $P\in\binom{[M]}{k}$ let the integer $L_{P}$ be  given such that
 $1\le L_P\le \prod_{i\in P} n_i.$ For every
 $P\in\binom{[M]}{k}$, set $K_P=\prod_{i\in P}n_i$, $N_P=\lcm\{n_i:i\in P\}$, and
 $\ell_P=\frac{K_P}{N_P}$.
 \begin{theorem}\label{fotetel}
 Assume that a $\mu>0$ is given such that
 \begin{equation} \label{gencond}
 \forall \hbox{\  } P\in\binom{[M]}{k} \hbox{\  \  \  } \ell_P\lceil \mu N_P\rceil \leq L_P.
 \end{equation}
Then for any
$0\le\beta\le 1-\mu$, $\mathbb{C}(n_1,\ldots,n_M;\beta,\mu)$ is a $k$-dimensional transversal with the
given parameters $L_P$. Moreover,
if $\mu=\min_{P\in\binom{[M]}{k}}\frac{L_{P}}{K_P}$, then it is a full transversal.
\end{theorem}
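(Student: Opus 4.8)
\textbf{Proof proposal for Theorem~\ref{fotetel}.}

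The plan is to verify directly that $\mathbb{C}=\mathbb{C}(n_1,\ldots,n_M;\beta,\mu)$ satisfies the defining inequality~(\ref{korlat}) for a $k$-dimensional transversal, and then to obtain fullness by a counting argument. Fix $P\in\binom{[M]}{k}$ and fix values $b_j\in[n_j]^\star$ for all $j\in[M]\setminus P$. We must bound the number of $(i_1,\ldots,i_M)\in\mathbb{C}$ with $i_j=b_j$ for $j\notin P$. For such tuples the quantity $\sum_{j\notin P}\frac{i_j}{n_j}=\sum_{j\notin P}\frac{b_j}{n_j}=:\alpha$ is a fixed constant, so the membership condition $\fracpart{\sum_{j=1}^M \frac{i_j}{n_j}}\in[\beta,\beta+\mu)$ becomes exactly $\fracpart{\alpha+\sum_{j\in P}\frac{i_j}{n_j}}\in[\beta,\beta+\mu)$, where now $(i_j)_{j\in P}$ ranges freely over $\prod_{j\in P}[n_j]^\star$. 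By Lemma~\ref{Engelprime} (applied with the index set $P$ in place of $\{1,\ldots,k\}$, so with $K_P,N_P,\ell_P$ as in the generic notation), this count lies in $\{\ell_P\lfloor\mu N_P\rfloor,\ \ell_P\lceil\mu N_P\rceil\}$, hence is at most $\ell_P\lceil\mu N_P\rceil\le L_P$ by hypothesis~(\ref{gencond}). Since $P$ and the $b_j$ were arbitrary, $\mathbb{C}$ is a $k$-dimensional transversal with parameters $\{L_P\}$.

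For the fullness claim, assume $\mu=\min_{P\in\binom{[M]}{k}}\frac{L_P}{K_P}$, and let $P_0$ achieve this minimum, so $\mu=\frac{L_{P_0}}{K_{P_0}}$. I would compute $|\mathbb{C}|$ by summing over the choices of the coordinates outside $P_0$: for each of the $\prod_{j\notin P_0} n_j$ choices of $(b_j)_{j\notin P_0}$, the number of completions inside $P_0$ is, as above, in $\{\ell_{P_0}\lfloor\mu N_{P_0}\rfloor,\ \ell_{P_0}\lceil\mu N_{P_0}\rceil\}$. Now $\mu N_{P_0}=\frac{L_{P_0}}{K_{P_0}}N_{P_0}=\frac{L_{P_0}}{\ell_{P_0}}$ is an integer (since $\ell_{P_0}=K_{P_0}/N_{P_0}$ divides $L_{P_0}$ — this is exactly the content of $\ell_{P_0}\lceil\mu N_{P_0}\rceil\le L_{P_0}$ together with $\ell_{P_0}\mu N_{P_0}=L_{P_0}$), so floor and ceiling coincide and each inner count equals $\ell_{P_0}\cdot\mu N_{P_0}=L_{P_0}$. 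Therefore $|\mathbb{C}|=L_{P_0}\prod_{j\notin P_0} n_j$, which is precisely equality in~(\ref{korlat2}) for $P_0$; hence $\mathbb{C}$ is a full $k$-dimensional transversal.

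The only genuinely delicate point is making sure Lemma~\ref{Engelprime} is being applied correctly: its statement is written for coordinates $n_1,\ldots,n_k$ with the sum running over all of them, whereas here the "free" coordinates are those indexed by an arbitrary $k$-subset $P$ and there is an additive shift $\alpha$ coming from the fixed coordinates. This is handled purely by relabelling — the lemma already carries the shift parameter $\alpha$, and $K_P,N_P,\ell_P$ are defined exactly as $K,N,\ell$ in the lemma for the sub-tuple $(n_j)_{j\in P}$ — so no real work is needed, but the bookkeeping must be stated cleanly. A secondary small point, used in the fullness step, is the divisibility observation $\ell_{P_0}\mid L_{P_0}$; I would note that~(\ref{gencond}) forces $\ell_{P_0}\lceil \mu N_{P_0}\rceil\le L_{P_0}=\ell_{P_0}\mu N_{P_0}\le \ell_{P_0}\lceil \mu N_{P_0}\rceil$, so all inequalities are equalities and in particular $\mu N_{P_0}=\lceil\mu N_{P_0}\rceil\in\mathbb{Z}$.
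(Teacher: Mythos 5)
Your proof is correct and takes essentially the same approach as the paper: both parts reduce directly to Lemma~\ref{Engelprime} with the shift $\alpha=\sum_{j\notin P} b_j/n_j$, and fullness is obtained by observing that~(\ref{gencond}) together with $\mu K_{P_0}=L_{P_0}$ forces $\mu N_{P_0}\in\mathbb{Z}$, so the count in each fiber is exactly $L_{P_0}$. You merely spell out the final summation over the $\prod_{j\notin P_0}n_j$ fibers more explicitly than the paper does.
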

Note that condition (\ref{gencond}) easily implies that $\mu\le\min_{P\in\binom{[M]}{k}}\frac{L_{P}}{K_P}$. 
The proof of Theorem~\ref{fotetel} is also postponed to Section~\ref{MOAproofs}.
\begin{corollary}\label{firsttry}
If the $n_i$ numbers are pairwise relatively prime,
then for $\mu=\min_{P\in\binom{[M]}{k}}\frac{L_{P}}{K_P}$
and for any $0\le\beta\le 1-\mu$,  $\mathbb{C}(n_1,\ldots,n_M;\beta,\mu)$ is a full
$k$-dimensional transversal with these parameters.
\end{corollary}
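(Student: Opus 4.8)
The plan is to verify that the hypothesis of Theorem~\ref{fotetel} is met with $\mu=\min_{P\in\binom{[M]}{k}}\frac{L_P}{K_P}$ when the $n_i$ are pairwise relatively prime, and then invoke the theorem. The only thing to check is condition (\ref{gencond}): for every $P\in\binom{[M]}{k}$ we need $\ell_P\lceil\mu N_P\rceil\le L_P$. Since the $n_i$ with $i\in P$ are pairwise relatively prime, we have $N_P=\lcm\{n_i:i\in P\}=\prod_{i\in P}n_i=K_P$, and hence $\ell_P=K_P/N_P=1$. So (\ref{gencond}) reduces to $\lceil\mu K_P\rceil\le L_P$ for every $P$.

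To finish, I would argue as follows. By the choice of $\mu$ we have $\mu\le\frac{L_P}{K_P}$, hence $\mu K_P\le L_P$ for every $P$. Since $L_P$ is an integer, $\lceil\mu K_P\rceil\le L_P$, which is exactly (\ref{gencond}) in this case. Moreover $\mu>0$ since each $L_P\ge 1$, and $\mu\le\frac{L_P}{K_P}\le 1$ because $L_P\le\prod_{i\in P}n_i=K_P$; actually we only need $\mu\le 1$ to apply the theorem, and if $\mu=1$ the statement is degenerate but still fine (or one notes $0<\mu\le 1$ suffices for Construction~\ref{construction} to be defined, with the interval $[\beta,\beta+\mu)$ covering all of $[0,1)$ when $\mu=1$). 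Then Theorem~\ref{fotetel} directly gives that $\mathbb{C}(n_1,\ldots,n_M;\beta,\mu)$ is a $k$-dimensional transversal for every $0\le\beta\le 1-\mu$, and since $\mu$ was chosen as $\min_{P\in\binom{[M]}{k}}\frac{L_P}{K_P}$, the "moreover" clause of Theorem~\ref{fotetel} gives that it is full.

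There is essentially no obstacle here: the corollary is an immediate specialization of Theorem~\ref{fotetel}, with the single observation that pairwise coprimality of $\{n_i:i\in P\}$ forces $\ell_P=1$, collapsing the ceiling condition $\ell_P\lceil\mu N_P\rceil\le L_P$ to the trivial inequality $\lceil\mu K_P\rceil\le L_P$ that follows from the definition of $\mu$ together with integrality of $L_P$. The proof is therefore just two or three lines. The only minor care needed is to confirm the side conditions $0<\mu$ (immediate from $L_P\ge 1$) and $\mu\le 1$ (immediate from $L_P\le K_P$) so that Theorem~\ref{fotetel} and Construction~\ref{construction} apply as stated.
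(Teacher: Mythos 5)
Your proof is correct and follows essentially the same route as the paper: observe that pairwise coprimality gives $N_P=K_P$ and $\ell_P=1$, so condition (\ref{gencond}) collapses to $\lceil\mu K_P\rceil\le L_P$, which holds since $\mu K_P\le L_P$ and $L_P$ is an integer; then invoke Theorem~\ref{fotetel}. You are a bit more explicit than the paper about the integrality step and the side conditions $0<\mu\le 1$, but the substance is identical.
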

\begin{proof} It is enough to check that (\ref{gencond}) holds in Theorem~\ref{fotetel}
for this $\mu$.
Let $P\in\binom{[M]}{k}$. From the fact that the $n_i$ numbers are relatively prime, it follows
that $K_P=N_P$ and $\ell_P=1$.
Therefore
$\ell_P\lceil\mu N_P\rceil\le\lceil\frac{L_P}{N_P}\cdot N_P\rceil=L_P$.
\end{proof}

Corollary~\ref{firsttry} ensures that we have a full $k$-dimensional transversal
for all $k\in[M]$ and all allowed settings of $\{L_P: P\in\binom{[M]}{k}\}$
whenever $n_1,\ldots,n_M$ are relatively prime. Unfortunately, this does not allow us to
chose parameters that give MOAs, i.e. for values of $L_P$ such that
$\frac{K_P}{L_P}$ is constant.
We can still use the construction in (\ref{construction}) to find such transversal, but we
need to put more restrictions on the possible values of the $L_P$.

\begin{corollary}\label{oarray}
Assume
that there is a constant $0<\mu\le 1$
such that
for each $P\in\binom{[M]}{k}$,
$\mu N_P$ is an integer and $L_P=\mu K_P$.
Then, for every $0\le \beta\le 1-\mu$,
 $\mathbb{C}(n_1,\ldots,n_M;\beta,\mu)$ is a full $k$-dimensional transversal, and
provides a simple  MOA of strength $M-k$.
\end{corollary}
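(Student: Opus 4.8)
The plan is to verify that the hypothesis of Corollary~\ref{oarray} is a special case of the hypothesis of Theorem~\ref{fotetel}, deduce that $\mathbb{C}(n_1,\ldots,n_M;\beta,\mu)$ is a full $k$-dimensional transversal, and then invoke Proposition~\ref{elemi} to convert fullness into the MOA statement, with simplicity coming for free. First I would check condition (\ref{gencond}): for each $P\in\binom{[M]}{k}$ we are told $\mu N_P$ is an integer, hence $\lceil\mu N_P\rceil=\mu N_P$, and therefore $\ell_P\lceil\mu N_P\rceil=\ell_P\mu N_P=\mu\cdot\frac{K_P}{N_P}\cdot N_P=\mu K_P=L_P$, so (\ref{gencond}) holds (with equality). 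Thus Theorem~\ref{fotetel} applies and $\mathbb{C}(n_1,\ldots,n_M;\beta,\mu)$ is a $k$-dimensional transversal with parameters $\{L_P\}$ for every $0\le\beta\le 1-\mu$.

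Next I would argue fullness. Since $L_P=\mu K_P$ for every $P$, we have $\frac{L_P}{K_P}=\mu$ identically, so $\mu=\min_{P\in\binom{[M]}{k}}\frac{L_P}{K_P}$, which is exactly the condition under which the ``moreover'' clause of Theorem~\ref{fotetel} guarantees that the transversal is full. (Equivalently, one may note that $L_P=\mu K_P=\mu\prod_{i\in P}n_i$ forces $\frac{1}{L_P}\prod_{i\in P}n_i=\frac{1}{\mu}$ to be independent of $P$, so condition (\ref{konstant}) is satisfied and fullness for one $P$ propagates to all $P$.)

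Finally, since (\ref{konstant}) holds, Proposition~\ref{elemi} says that any full $k$-dimensional transversal with these parameters is a MOA with symbol sets $S_i=[n_i]^\star$, constraint $M$, strength $M-k$, and index set $\mathbb{L}=\{L_P:P\in\binom{[M]}{k}\}$. It remains to see the MOA is simple, i.e. that $\mathbb{C}(n_1,\ldots,n_M;\beta,\mu)$ is a genuine set with no repeated rows: this is immediate from the definition in (\ref{construction}), where each $(i_1,\ldots,i_M)\in\pi_M$ is either in or out, so the construction is always a simple set, hence by Proposition~\ref{elemi} the corresponding MOA is simple. I do not anticipate a serious obstacle here; the only point requiring a moment's care is making sure $0<\mu\le 1$ so that $\beta$ ranges over a nonempty interval and Theorem~\ref{fotetel} (which assumes $0<\mu<1$ in Lemma~\ref{Engelprime}, but is stated for $\mu>0$) genuinely applies — in the boundary case $\mu=1$ one has $\beta=0$ and $\mathbb{C}=\pi_M$, which is trivially the unique full transversal (and the degenerate MOA consisting of all rows), so the statement holds there as well.
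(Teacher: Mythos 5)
Your proof is correct and follows essentially the same route as the paper: verify condition (\ref{gencond}) via $\ell_P\lceil\mu N_P\rceil=\ell_P\mu N_P=\mu K_P=L_P$, note $\mu=\min_P L_P/K_P$, and conclude by the ``moreover'' clause of Theorem~\ref{fotetel}. The paper leaves the MOA/simplicity part implicit (as a direct consequence via Proposition~\ref{elemi}), which you spell out, and your remark about the $\mu=1$ boundary case is a reasonable but inessential extra observation.
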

\begin{proof}
The condition on $\mu$ gives
$\ell_P\lceil\mu N_P\rceil=\ell_P\mu N_P=\mu K_P=L_P$ and
$\mu=\min_P\frac{L_P}{K_P}$; the statement follows from Theorem~\ref{fotetel}.
\end{proof}

While the conditions of the theorem may at first glance
seem restrictive, we can easily satisfy them. For
a given $k\in[M]$ we  chose a sequence of
integers $ j_1, j_2, \cdots, j_M$, and set $n_i=\prod_{q=1}^i j_{q}$. Set
$q$ as one of the divisors of $n_k$ and $\mu=\frac{1}{q}$. It is clear that
this choice of $\mu$ satisfies the conditions of Theorem~\ref{oarray}, since
for each $P\in\binom{[M]}{k}$ we have that $N_P=\lcm\{n_i:i\in P\}=n_{\max P}$. By the the choice of the $n_i$'s and the fact that
$k\le\max P$, 
$n_k$ divides $N_P$. Since
$\mu n_k$ is an integer, so is $\mu N_P$.
Thus, for each $P\in\binom{[M]}{k}$ if we chose
$L_{P}=\mu K_P$, then the construction gives a simple MOA with the given parameters.

\bigskip

We also
provide two "generic" constructions to create new full $k$-dimensional multi-transversals
and MOAs from already known
 ones, under some numerical conditions: "linear combination", 
and "tensor product". The correctness of these constructions is straightforward from the definitions.

 \begin{proposition}\label{prop:lincomb} {\rm [Linear Combination for Transversals.]}
 \begin{enumerate}[{\rm (i)}]
 \item
Let $j\in\mathbb{Z}^+$ and for each $\ell\in[j]$ let $\TT_{\ell}$ be a $k$-dimensional multi-transversal on 
$\pi_M$ with parameters
$L^{(\ell)}_P:P\in\binom{[M]}{k}$. Assume that for all $\ell\in[j]$ positive reals $\alpha_{\ell}$ are given such that for all
$(i_1,\ldots,i_M)\in\pi_M$ the quantity $\sum_{\ell=1}^{j}\alpha_{\ell}\cdot\#[(i_1,\ldots,i_M),\TT_{\ell}]$ is an integer, and
let 
$$\TT^{\star}=\multiset{(i_1,\ldots,i_M)^{\sum_{\ell=1}^{j}\alpha_{\ell}\cdot\#[(i_1,\ldots,i_M),\TT_{\ell}]}:(i_1,\ldots,i_M)\in\pi_M}.$$
Then
$\TT^{\star}$ is a $k$-dimensional multi-transversal on $\pi_M$ with parameters 
$L^{\star}_P:=\lfloor\sum_{\ell=1}^j \alpha_{\ell} L^{(\ell)}_P\rfloor:~P\in\binom{[M]}{k}$.
 \item Assume further that each $\TT_{\ell}$ above is a full multi-transversal and there is a common 
 $A\in\binom{[M]}{k}$ on which all
 $\TT^{\ell}$ simultaneously meet the bound, i.e.
 $$\forall \ell\in[j]\,\,\,\,L_{A}^{\ell}\prod_{j\notin A} n_j=\min_{P\in\binom{[M]}{k}} \left(L_{P}^{(\ell)}\prod_{j\notin P} n_j\right).
 $$
 Then $\TT^{\star}$ is a full multi-transversal as well.\hfill\qed
 \end{enumerate}
\end{proposition}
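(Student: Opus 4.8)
The plan is to check both parts straight from the definition (\ref{korlat}), exploiting that by construction the multiplicity function of $\TT^\star$ is the positive linear combination $\#[(i_1,\dots,i_M),\TT^\star]=\sum_{\ell=1}^{j}\alpha_\ell\,\#[(i_1,\dots,i_M),\TT_\ell]$ of those of the $\TT_\ell$. For part (i), fix $P\in\binom{[M]}{k}$ and values $b_j\in[n_j]^{\star}$ for $j\in[M]\setminus P$, and for a transversal $\GG$ write $\sigma(\GG)$ for the size of the multiset appearing in (\ref{korlat}) built from this $P$ and these $b_j$ (the ``$P$-fibre over $(b_j)$''). Summing the multiplicity identity over all $(i_1,\dots,i_M)$ with $i_j=b_j$ for $j\notin P$ gives $\sigma(\TT^\star)=\sum_{\ell=1}^{j}\alpha_\ell\,\sigma(\TT_\ell)$; since each $\TT_\ell$ is a $k$-dimensional multi-transversal with parameters $L^{(\ell)}_P$ we have $\sigma(\TT_\ell)\le L^{(\ell)}_P$, and as the $\alpha_\ell$ are positive this yields $\sigma(\TT^\star)\le\sum_{\ell}\alpha_\ell L^{(\ell)}_P$. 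But $\sigma(\TT^\star)$ is a non-negative integer (a finite sum of integer multiplicities of $\TT^\star$), so $\sigma(\TT^\star)\le\lfloor\sum_\ell\alpha_\ell L^{(\ell)}_P\rfloor=L^\star_P$; as $P$ and the $b_j$ were arbitrary, $\TT^\star$ is a $k$-dimensional multi-transversal with parameters $\{L^\star_P\}$.

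For part (ii), I would first record the elementary fact that, for a fixed $A\in\binom{[M]}{k}$, a $k$-dimensional multi-transversal $\GG$ with parameters $\{L_P\}$ satisfies $|\GG|=L_A\prod_{j\notin A}n_j$ (i.e.\ equality in (\ref{korlat2}) at $A$) if and only if $\sigma(\GG)=L_A$ for \emph{every} choice of the $b_j$, because $|\GG|$ is the sum of the $\prod_{j\notin A}n_j$ fibre-sizes, each of which is at most $L_A$ by (\ref{korlat}). Next, since every $\TT_\ell$ is full, (\ref{korlat2}) forces $|\TT_\ell|=\min_{P}L^{(\ell)}_P\prod_{j\notin P}n_j$, and the hypothesis of (ii) says this minimum is attained at $A$, i.e.\ $|\TT_\ell|=L^{(\ell)}_A\prod_{j\notin A}n_j$; so by the fact just noted $\sigma(\TT_\ell)=L^{(\ell)}_A$ in every $A$-fibre, for every $\ell$. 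Plugging this into $\sigma(\TT^\star)=\sum_\ell\alpha_\ell\,\sigma(\TT_\ell)$ shows that every $A$-fibre of $\TT^\star$ has size $\sum_\ell\alpha_\ell L^{(\ell)}_A$. Sandwiching with part (i) gives $\sum_\ell\alpha_\ell L^{(\ell)}_A=\sigma(\TT^\star)\le L^\star_A=\lfloor\sum_\ell\alpha_\ell L^{(\ell)}_A\rfloor\le\sum_\ell\alpha_\ell L^{(\ell)}_A$, so all these agree; hence $\sigma(\TT^\star)=L^\star_A$ in every $A$-fibre, and summing over the $\prod_{j\notin A}n_j$ fibres gives $|\TT^\star|=L^\star_A\prod_{j\notin A}n_j$, which is equality in (\ref{korlat2}) at $P=A$. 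Thus $\TT^\star$ is full.

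There is no genuinely difficult step here — as the paper remarks, correctness is immediate from the definitions — but the one subtlety to handle carefully is the integrality: the linear-combination estimate only gives $\sigma(\TT^\star)\le\sum_\ell\alpha_\ell L^{(\ell)}_P$, a possibly non-integral bound, and one must use that $\sigma(\TT^\star)\in\mathbb{Z}$ to replace the right side by its floor $L^\star_P$; in part (ii) this same integrality, now forced by the sandwiching inequality, is exactly what makes $L^\star_A$ equal to (rather than strictly below) $\sum_\ell\alpha_\ell L^{(\ell)}_A$, which is precisely what is needed for fullness to be preserved.
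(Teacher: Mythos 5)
Your proof is correct and is precisely the straightforward-from-definitions argument the paper has in mind (the paper does not spell out a proof, remarking only that correctness is immediate from the definitions). The integrality observation --- that each fibre size $\sigma(\TT^\star)$ is an integer, so the bound $\sum_\ell\alpha_\ell L^{(\ell)}_P$ may be replaced by its floor $L^\star_P$, and conversely in part (ii) that the sandwich forces $\sum_\ell\alpha_\ell L^{(\ell)}_A$ to already equal its floor --- is the one point requiring care, and you handle it correctly.
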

Since the condition is true when the $\alpha_{\ell}$ are all integers, this means in particular that if $\TT_1$ and $\TT_2$ are both $k$-dimensional multi-transversals, then so is
$\TT_1\uplus\TT_2$.
 \begin{proposition} {\rm [Linear Combination for MOAs.]}
Let $j\in\mathbb{Z}^+$ and for each $\ell\in[j]$ let $\TT_{\ell}$ be a full $k$-dimensional multi-transversal on 
$\pi_M$ with parameters
$L^{(\ell)}_P:P\in\binom{[M]}{k}$ such that $L^{\ell}_P\cdot\prod_{j\notin P} n_j$ is independent of $P$ (i.e. $\TT_{\ell}$ is a MOA).
Let nonzero reals $\alpha_{\ell}$ be given for all $\ell\in[j]$ such that for all
$(i_1,\ldots,i_M)\in\pi_M$ the quantity $\sum_{\ell=1}^{j}\alpha_{\ell}\cdot\#[(i_1,\ldots,i_M),\TT_{\ell}]$ is a 
non-negative integer, and
let $\TT^{\star}=\sum_{\ell=1}^{j}\alpha_{\ell}\TT_{\ell}$ be defined as
$$\TT^{\star}=\multiset{(i_1,\ldots,i_M)^{\sum_{\ell=1}^{j}\alpha_{\ell}\cdot\#[(i_1,\ldots,i_M),\TT_{\ell}]}:(i_1,\ldots,i_M)\in\pi_M}.$$
Then
$\TT^{\star}$ is a full $k$-dimensional multi-transversal on $\pi_M$ with parameters 
$L^{\star}_P=\sum_{\ell=1}^j \alpha_{\ell} L^{(\ell)}_P:~P\in\binom{[M]}{k}$, moreover, $L^{\star}_P\prod_{j\notin P} n_j$ is independent of $P$ (with other words, $\TT^{\star}$ is a MOA).
\hfill\qed
\end{proposition}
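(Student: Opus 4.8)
\Proof{[Proposal.]} The plan is to reduce all three assertions to a single computation of the ``fiber counts'' of $\TT^{\star}$, exploiting the fact that for a MOA these counts are not merely bounded by the corresponding $L$-parameter but equal to it. For a multiset $\GG\Subset\pi_M$, a $P\in\binom{[M]}{k}$, and a fixed choice $b=(b_j)_{j\in[M]\setminus P}$ with $b_j\in[n_j]^{\star}$, write
$$
\nu_P(\GG;b):=\sum_{\substack{(i_1,\ldots,i_M)\in\pi_M\\ i_j=b_j\ \forall j\in[M]\setminus P}}\#[(i_1,\ldots,i_M),\GG]
$$
for the left-hand side of (\ref{korlat}) attached to $\GG$. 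I will show that $\nu_P(\TT^{\star};b)=\sum_{\ell=1}^{j}\alpha_\ell L^{(\ell)}_P=L^{\star}_P$ for every such $P$ and $b$; everything then follows at once.

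First note that $\TT^{\star}$ is a legitimate multiset, since by hypothesis each exponent $\sum_{\ell=1}^{j}\alpha_\ell\#[(i_1,\ldots,i_M),\TT_\ell]$ is a non-negative integer. The main step is the fiber computation: since the multiplicities of $\TT^{\star}$ are by definition that linear combination of the multiplicities of the $\TT_\ell$, interchanging two finite sums gives
$$
\nu_P(\TT^{\star};b)=\sum_{\substack{(i_1,\ldots,i_M)\in\pi_M\\ i_j=b_j\ \forall j\notin P}}\ \sum_{\ell=1}^{j}\alpha_\ell\#[(i_1,\ldots,i_M),\TT_\ell]=\sum_{\ell=1}^{j}\alpha_\ell\,\nu_P(\TT_\ell;b).
$$
Here is the only place the MOA hypothesis enters. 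Since $\TT_\ell$ is a full $k$-dimensional multi-transversal with $L^{(\ell)}_P\prod_{j\notin P}n_j$ independent of $P$, condition (\ref{konstant}) holds for it, so by Proposition~\ref{elemi} it is a MOA of constraint $M$ and strength $M-k$ with $\lambda([M]\setminus P)=L^{(\ell)}_P$. By the defining property of a strength-$(M-k)$ MOA, the number of rows of $\TT_\ell$ (counted with multiplicity) whose entries in the $M-k$ columns indexed by $[M]\setminus P$ agree with $b$ equals exactly $\lambda([M]\setminus P)$; that is, $\nu_P(\TT_\ell;b)=L^{(\ell)}_P$, independently of $b$. Substituting gives $\nu_P(\TT^{\star};b)=\sum_{\ell=1}^{j}\alpha_\ell L^{(\ell)}_P=L^{\star}_P$ for all $P$ and $b$, as claimed.

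From this the conclusions are immediate. The common value $L^{\star}_P$ is a non-negative integer because it equals the count $\nu_P(\TT^{\star};b)$ (and it is positive as soon as $\TT^{\star}\neq\emptyset$, the only case of interest); hence (\ref{korlat}) holds for $\TT^{\star}$, with equality, so $\TT^{\star}$ is a $k$-dimensional multi-transversal with parameters $L^{\star}_P$. Since $\nu_P(\TT^{\star};b)$ does not depend on $b$, $\TT^{\star}$ is itself a strength-$(M-k)$ MOA. Summing the fiber counts over the $\prod_{j\notin P}n_j$ choices of $b$ yields $|\TT^{\star}|=L^{\star}_P\prod_{j\notin P}n_j$ for every $P$; read one way this says $\TT^{\star}$ is full, read the other way it says $L^{\star}_P\prod_{j\notin P}n_j=|\TT^{\star}|$ is independent of $P$. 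I expect no genuine obstacle: the argument is pure bookkeeping, and the only points needing care are the index-set correspondence $L_P\leftrightarrow\lambda([M]\setminus P)$ of Proposition~\ref{elemi} and the observation that $L^{\star}_P$, a priori just a real combination of the $L^{(\ell)}_P$, is forced to be an integer by being realized as a fiber count.
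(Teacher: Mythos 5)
Your proof is correct, and it matches the route the paper intends: the proposition is stated with a bare \hfill\qed because the authors deem it ``straightforward from the definitions,'' and what you have written is exactly the bookkeeping they left implicit. The key point you correctly isolate is that for a full transversal satisfying (\ref{konstant}) the fiber counts $\nu_P(\TT_\ell;b)$ are not merely bounded by $L^{(\ell)}_P$ but equal to it (this is the content of Proposition~\ref{elemi}), so linearity forces $\nu_P(\TT^{\star};b)=L^{\star}_P$ constant in $b$, from which being a multi-transversal, fullness, $P$-independence of $L^{\star}_P\prod_{j\notin P}n_j$, and integrality of $L^{\star}_P$ all drop out simultaneously; your parenthetical caveat about the degenerate $\TT^{\star}=\emptyset$ case (where $L^{\star}_P=0$ would violate the standing convention $L_P\ge 1$) is the right thing to flag.
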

In Proposition~\ref{prop:lincomb}, chose $j=2$, and MOAs $\TT_1$ and $\TT_2$ such that
$\#[\vec{i},\TT_2]\ge\#[\vec{i},\TT_2]$ for all $\vec{i}\in\pi_M$. Then setting $\alpha_{\ell}=(-1)^{\ell}$ for $\ell\in[2]$ satisfies
the conditions of Proposition~\ref{prop:lincomb} and
$\TT^{\star}=\sum_{\ell=1}^2\alpha_{\ell}\TT_{\ell}=\TT_2\setminus\TT_1$; this type of linear combination
is exactly the relative complementation on MOAs.
Accordingly, if a MOA contains another one with the same strength as a subarray, erasing the rows of the subarray results in a new MOA. 

Proposition~\ref{prop:lincomb}  allows us to use the construction in (\ref{construction}) to build 
simple MOAs different from the ones in (\ref{construction}).
\begin{corollary}
Let $n_1,\ldots,n_M$,  and
$0<\mu<1$ be given such that they satisfy the conditions of Corollary~\ref{oarray}.
For a fixed positive integer $Q$, and for each $i\in[2Q+1]$ let $\beta_i$ be given such that
$0\le \beta_1<\beta_2<\cdots<\beta_{2Q+1}<\beta_1+\mu\le 1$ and $\beta_{2Q+1}\le 1-\mu$. 
Define ${\cal I}\subseteq[0,1)$ by
$${\cal I}=\left(\bigcup_{\ell=1}^{Q}\Bigl[\beta_{2\ell-1},\beta_{2\ell}\Bigl)\right)\cup\Bigl[\beta_{2\ell+1},\beta_1+\mu\Bigl)
\cup
\left(\bigcup_{\ell=1}^{Q}\Bigl[\beta_{2\ell}+\mu,\beta_{2\ell+1}+\mu\Bigl)\right).$$
Then the following is a 
$k$-dimensional transversal on $\pi_M$ with parameters $L_P=\mu\prod_{q\in P} n_q$ and  provides a simple 
MOA of strength $M-k$:
$$
\TT=\left\{
(i_1,\ldots,i_M)\in\pi_M:\fracpart{\sum_{r=1}^M\frac{i_r}{m_r}}\in{\cal I}\right\}.
$$
\end{corollary}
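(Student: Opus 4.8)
The plan is to realise $\TT$ as a telescoping alternating sum of the full transversals delivered by Corollary~\ref{oarray}, and then conclude by the Linear Combination proposition for MOAs. For $\vec{i}=(i_1,\dots,i_M)\in\pi_M$ write $y(\vec{i})=\fracpart{\sum_{r=1}^{M} i_r/n_r}\in[0,1)$, and for $1\le i\le 2Q+1$ set $J_i=[\beta_i,\beta_i+\mu)$ and $\mathbb{C}_i=\mathbb{C}(n_1,\dots,n_M;\beta_i,\mu)$, so that $\mathbb{C}_i=\{\vec{i}\in\pi_M: y(\vec{i})\in J_i\}$ and $\TT=\{\vec{i}\in\pi_M: y(\vec{i})\in\mathcal{I}\}$ (each $\mathbb{C}_i$ is defined since $0\le\beta_i\le\beta_{2Q+1}\le 1-\mu$). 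The crux is the pointwise identity of $\{0,1\}$-valued functions on $[0,1)$
\[
\mathbf{1}_{\mathcal{I}}=\sum_{i=1}^{2Q+1}(-1)^{i+1}\,\mathbf{1}_{J_i},
\]
which gives $\#[\vec{i},\TT]=\sum_{i=1}^{2Q+1}(-1)^{i+1}\,\#[\vec{i},\mathbb{C}_i]$ for every $\vec{i}\in\pi_M$ and, in particular, shows that $\TT$ is an honest simple subset of $\pi_M$.

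First I would establish this interval identity. The hypotheses $0\le\beta_1<\dots<\beta_{2Q+1}<\beta_1+\mu$ and $\beta_{2Q+1}\le 1-\mu$ force the $4Q+2$ numbers $\beta_1,\dots,\beta_{2Q+1},\beta_1+\mu,\dots,\beta_{2Q+1}+\mu$ to lie in $[0,1)$ in exactly that increasing order, with $\beta_{i'}-\beta_i<\mu$ for all $i,i'$; so $y\in J_i$ exactly when $y$ is weakly right of $\beta_i$ and strictly left of $\beta_i+\mu$. Chasing $y$ through the consecutive sub-intervals cut out by these endpoints one gets: on $[\beta_j,\beta_{j+1})$ ($1\le j\le 2Q$) one has $y\in J_i\iff i\le j$, so the alternating sum equals $\sum_{i\le j}(-1)^{i+1}$, i.e.\ $1$ if $j$ is odd and $0$ if $j$ is even; on $[\beta_{2Q+1},\beta_1+\mu)$ every $J_i$ contains $y$, so the sum is $1$; on $[\beta_j+\mu,\beta_{j+1}+\mu)$ ($1\le j\le 2Q$) one has $y\in J_i\iff i\ge j+1$, so the sum equals $1-\sum_{i\le j}(-1)^{i+1}$, i.e.\ $0$ if $j$ is odd and $1$ if $j$ is even; and the sum vanishes for $y<\beta_1$ or $y\ge\beta_{2Q+1}+\mu$. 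Collecting the sub-intervals on which the value is $1$ yields precisely $\bigl(\bigcup_{\ell=1}^{Q}[\beta_{2\ell-1},\beta_{2\ell})\bigr)\cup[\beta_{2Q+1},\beta_1+\mu)\cup\bigl(\bigcup_{\ell=1}^{Q}[\beta_{2\ell}+\mu,\beta_{2\ell+1}+\mu)\bigr)=\mathcal{I}$. I expect this bookkeeping to be the main obstacle, though it is routine: the one genuinely essential point is that the alternating sum never leaves $\{0,1\}$ (the partial sums of $1,-1,1,-1,\dots$ are always $0$ or $1$), and this is exactly where the hypothesis $\beta_{2Q+1}<\beta_1+\mu$ — all the shifted copies of a single interval fit inside one period — is used.

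Finally I would assemble the pieces. By Corollary~\ref{oarray}, since $n_1,\dots,n_M$ and $\mu$ satisfy its hypotheses, each $\mathbb{C}_i$ is a full $k$-dimensional transversal and a simple MOA of strength $M-k$ with parameters $L_P^{(i)}=\mu K_P$ for all $P\in\binom{[M]}{k}$; in particular $L_P^{(i)}\prod_{j\notin P}n_j=\mu\prod_{j=1}^{M}n_j$ is independent of $P$. Applying the Linear Combination proposition for MOAs to $\mathbb{C}_1,\dots,\mathbb{C}_{2Q+1}$ with the nonzero coefficients $\alpha_i=(-1)^{i+1}$ — legitimate because $\sum_{i}\alpha_i\,\#[\vec{i},\mathbb{C}_i]=\#[\vec{i},\TT]\in\{0,1\}$ is a non-negative integer for every $\vec{i}$ — shows that $\TT$ is a full $k$-dimensional multi-transversal with parameters $L_P^{\star}=\sum_{i=1}^{2Q+1}(-1)^{i+1}\mu K_P=\mu K_P=\mu\prod_{q\in P}n_q$ and that $L_P^{\star}\prod_{j\notin P}n_j$ is independent of $P$. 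Hence condition (\ref{konstant}) holds, and Proposition~\ref{elemi} gives that $\TT$ is a MOA of strength $M-k$; since all multiplicities $\#[\vec{i},\TT]$ are $0$ or $1$, both $\TT$ and the corresponding MOA are simple, as claimed.
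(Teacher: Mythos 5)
Your proof is correct and takes essentially the same route as the paper: realise $\TT$ as the alternating sum $\sum_{\ell=1}^{2Q+1}(-1)^{\ell+1}\mathbb{C}(n_1,\ldots,n_M;\beta_\ell,\mu)$ of the full transversals supplied by Corollary~\ref{oarray}, then conclude by the linear-combination proposition. You additionally make explicit the indicator identity $\mathbf{1}_{\mathcal{I}}=\sum_{\ell}(-1)^{\ell+1}\mathbf{1}_{[\beta_\ell,\beta_\ell+\mu)}$, which the paper asserts without verification, and you correctly invoke the MOA variant of the linear-combination proposition (which permits nonzero signed coefficients) rather than Proposition~\ref{prop:lincomb}, whose hypothesis as stated requires the $\alpha_\ell$ to be positive.
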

\begin{proof} For $\ell\in[2Q+1]$ let
$\TT_{\ell}=\mathbb{C}(n_1,\ldots,n_M;\beta_{\ell},\mu)$. By Corollary~\ref{oarray}, each $\TT_{\ell}$ is a full $k$-dimensional transversal on $\pi_M$ with parameters $L_P=\mu\prod_{j\in P}n_j$ satisfying the conditions of Proposition~\ref{prop:lincomb}.
Also, using $\alpha_{\ell}=(-1)^{\ell+1}$ we obtain that $\TT=\sum_{\ell=1}^{2Q+1} \alpha_{\ell}\TT_{\ell}$. The statement
follows from Proposition~\ref{prop:lincomb} and the fact that $\sum_{\ell=1}^{2Q+1}\alpha_{\ell}=1$.
\end{proof}

\begin{proposition} {\rm [Tensor product.]}
 \begin{enumerate}[{\rm (i)}]
\item Assume that $\TT_1$  and $\TT_2$ are
$k$-dimensional multi-transversals on
$\prod_{j=1}^{M} [n_j^{(1)}]^{\star}$ and $\prod_{j=1}^{M} [n_j^{(2)}]^{\star}$ with parameters
 $L^{(1)}_P$ and $L^{(2)}_P$
($P\in \binom{[M]}{k}$), respectively. Then 
$$\TT=\Bmultiset{
(a_1n_1^{(2)}+b_1,\ldots,a_Mn_M^{(2)}+b_M)^{\#[(a_1,\ldots,a_M),\TT_1]\cdot\#[(b_1,\ldots,b_M),\TT_2]}}$$
is a $k$-dimensional multi-transversal on
$\prod_{j=1}^{M} [n_j^{(1)}n_j^{(2)}]^{\star}$ with parameters $L_P=L^{(1)}_PL^{(2)}_P$.
\item Assume that $\TT_1$  and $\TT_2$ above are full multi-transversals,
 and assume that there exists an $A\in\binom{[M]}{k}$, in which both meet the
 bound set by $A$, i.e. for $i\in\{1,2\}$ we have
$$
L_{A}^{(i)}\prod_{j\notin A} n_j^{(i)}=\min_{P\in\binom{[M]}{k}} \left(L_{P}^{(i)}\prod_{j\notin P} n_j^{(i)}\right).
$$
Then  $\TT$ is a  full multi-transversal as well.\hfill\qed
\end{enumerate}
\end{proposition}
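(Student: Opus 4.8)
The plan is to build everything from the coordinatewise division bijection. For each $j\in[M]$, writing an element $c\in[n_j^{(1)}n_j^{(2)}]^{\star}$ as $c=an_j^{(2)}+b$ with $a\in[n_j^{(1)}]^{\star}$ and $b\in[n_j^{(2)}]^{\star}$ (quotient and remainder on division by $n_j^{(2)}$) is a bijection. Hence
$$
\big((a_1,\ldots,a_M),(b_1,\ldots,b_M)\big)\ \longmapsto\ (a_1n_1^{(2)}+b_1,\ldots,a_Mn_M^{(2)}+b_M)
$$
is a bijection from $\big(\prod_{j=1}^{M}[n_j^{(1)}]^{\star}\big)\times\big(\prod_{j=1}^{M}[n_j^{(2)}]^{\star}\big)$ onto $\prod_{j=1}^{M}[n_j^{(1)}n_j^{(2)}]^{\star}$, and by the definition of $\TT$ the $\TT$-multiplicity of the image point equals $\#[(a_1,\ldots,a_M),\TT_1]\cdot\#[(b_1,\ldots,b_M),\TT_2]$.

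For part (i), fix $P\in\binom{[M]}{k}$ and fix the coordinates $c_j$ for $j\in[M]\setminus P$ of a word in $\prod_{j=1}^{M}[n_j^{(1)}n_j^{(2)}]^{\star}$; decomposing each $c_j=a_jn_j^{(2)}+b_j$ then fixes $a_j$ and $b_j$ for $j\notin P$ as well. Summing $\TT$-multiplicities over all admissible completions (equivalently, over all $(a_j)_{j\in P}\in\prod_{j\in P}[n_j^{(1)}]^{\star}$ and $(b_j)_{j\in P}\in\prod_{j\in P}[n_j^{(2)}]^{\star}$), the summand factors as a function of $(a_j)_{j\in P}$ times a function of $(b_j)_{j\in P}$, so the total equals
$$
\Big(\sum_{(a_j)_{j\in P}}\#[(a_1,\ldots,a_M),\TT_1]\Big)\cdot\Big(\sum_{(b_j)_{j\in P}}\#[(b_1,\ldots,b_M),\TT_2]\Big).
$$
Each factor is the size of a sub-multiset of $\TT_1$, respectively $\TT_2$, obtained by fixing the coordinates outside $P$, hence at most $L_P^{(1)}$, respectively $L_P^{(2)}$, by (\ref{korlat}); therefore the left-hand side, which is exactly the quantity appearing in (\ref{korlat}) for $\TT$, is at most $L_P^{(1)}L_P^{(2)}=L_P$, and part (i) follows.

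For part (ii), summing $\TT$-multiplicities over all words gives $|\TT|=|\TT_1|\cdot|\TT_2|$ by the same factorization. Since $\TT_1$ and $\TT_2$ are full, (\ref{korlat2}) holds with equality for some index in each case, so $|\TT_i|=\min_{P\in\binom{[M]}{k}}L_P^{(i)}\prod_{j\notin P}n_j^{(i)}$ for $i\in\{1,2\}$; the hypothesis that $A$ attains this minimum then yields $|\TT_i|=L_A^{(i)}\prod_{j\notin A}n_j^{(i)}$. Because column $j$ of $\TT$ uses $n_j^{(1)}n_j^{(2)}$ symbols,
$$
L_A\prod_{j\notin A}\big(n_j^{(1)}n_j^{(2)}\big)=\Big(L_A^{(1)}\prod_{j\notin A}n_j^{(1)}\Big)\Big(L_A^{(2)}\prod_{j\notin A}n_j^{(2)}\Big)=|\TT_1|\cdot|\TT_2|=|\TT|,
$$
so (\ref{korlat2}) holds with equality at $A$, i.e.\ $\TT$ is full. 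The entire argument is bookkeeping; the only step needing care is the factorization of the double sum in part (i), which is valid precisely because fixing the coordinates $j\notin P$ of the long word simultaneously fixes those coordinates of both the $\TT_1$-part and the $\TT_2$-part, so the two inner sums do not interact. I expect no real obstacle here; this is the routine verification that the text above refers to.
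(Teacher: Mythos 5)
Your proof is correct, and since the paper gives no proof (it dismisses these tensor-product/linear-combination verifications as "straightforward from the definitions" and closes the proposition with a bare \qed), your argument is exactly the intended one: the coordinatewise division bijection, the factorization of the double sum in (\ref{korlat}), and the observation that fullness plus the common minimizing index $A$ makes $|\TT|=|\TT_1|\cdot|\TT_2|$ attain the bound in (\ref{korlat2}) at $A$. Nothing to add.
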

Condition (ii) holds, in particular, if (\ref{konstant}) holds for both $\TT_1$
and $\TT_2$, therefore the tensor product of MOAs
of the same constraint and the same strength is a MOA of the same constraint and the same strength, 
using in the $i^{\text{th}}$ column of $\TT$ the
Cartesian product of the symbol sets of the $i$-th columns of $\TT_1$
and $\TT_2$ with appropriate multiplicities.

\section{Proofs to the Sperner type results}
\label{Sproofs}
In the proofs of this section we will frequently make use of the following structure. Let
$\FF$ be a multi-family on $\biguplus_{i\in[M]} X_i$. Fix a $D\subseteq [M]$ and let
$F\subseteq X\setminus \biguplus_{i\in D} X_i$. We define
\begin{equation*}
\FF(F;D)=\Bmultiset{(E\setminus F)^{\#[E,\FF]}: E\cap\biguplus_{i\in[M]\setminus D} X_i=F }.
\end{equation*}
The following are clear from the definitions.
\begin{lemma}\label{lemma:FF(F)}
Let $\FF$ be a $k$-dimensional $M$-part  Sperner multi-family with parameters $L_P: P\in\binom{M}{k}$. Fix
$k\le N\le M$,  a $D\in\binom{[M]}{N}$ and let
$F\subseteq X\setminus \biguplus_{i\in D} X_i$. 
 \begin{enumerate}[{\rm (i)}]
 \item $\FF(F;D)$ is a $k$-dimensional $N$-part Sperner multi-family on $\biguplus_{i\in D} X_i$
 with parameters $L_P:P\in\binom{D}{k}$. 
 \item If $\FF$ is a simple family, so is $\FF(F;D)$.\hfill\qed
\end{enumerate}
\end{lemma}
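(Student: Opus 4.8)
The plan is to deduce both parts from a single bookkeeping observation and then just match definitions. Since $F\subseteq X\setminus\biguplus_{i\in D}X_i=\biguplus_{i\in[M]\setminus D}X_i$, the assignment $G\mapsto G\uplus F$ is a bijection from the subsets of $\biguplus_{i\in D}X_i$ onto the sets $E\subseteq X$ with $E\cap\biguplus_{i\in[M]\setminus D}X_i=F$, with inverse $E\mapsto E\setminus F$. Because each preimage of $E\mapsto E\setminus F$ on this domain is a single set, the ``sum over preimages'' in the definition of $\multiset{\cdot}$ collapses to a plain reindexing, so $\#[G,\FF(F;D)]=\#[G\uplus F,\FF]$ for every $G\subseteq\biguplus_{i\in D}X_i$. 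Part (ii) then follows instantly: if $\#[E,\FF]\in\{0,1\}$ for all $E$, the same holds for every multiplicity in $\FF(F;D)$.

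For part (i) I would fix $P\in\binom{D}{k}$ (so also $P\in\binom{[M]}{k}$), simple chains $\CC_j$ in $X_j$ for $j\in P$, and sets $D_i\subseteq X_i$ for $i\in D\setminus P$, and show the corresponding sub-multiset of $\FF(F;D)$ has size at most $L_P$. The key point is that deleting $F$ does not affect the classes indexed by $D$: if $E\cap\biguplus_{i\in[M]\setminus D}X_i=F$ and $G=E\setminus F$, then $G\cap X_j=E\cap X_j$ for all $j\in D$. Hence the constraints ``$\bigl(G\cap\biguplus_{j\in P}X_j\bigr)\in\prod_{j\in P}\CC_j$ and $X_i\cap G=D_i$ for all $i\in D\setminus P$'' become the same constraints on $E$, and adjoining the constraints $X_i\cap E=F\cap X_i$ for $i\in[M]\setminus D$ (which hold automatically for every $E$ in the domain) identifies the sub-multiset in question, via the bijection above and with multiplicities preserved, with
$$\Bmultiset{E^{\#[E,\FF]}:\bigl(E\cap\biguplus_{j\in P}X_j\bigr)\in\prod_{j\in P}\CC_j,\ \forall i\in D\setminus P\ \ X_i\cap E=D_i,\ \forall i\in[M]\setminus D\ \ X_i\cap E=F\cap X_i}.$$

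This last multiset is exactly the one appearing in the $k$-dimensional $M$-part Sperner condition for $\FF$ at the index set $P$, with chains $\CC_j$ $(j\in P)$ and with the fixed traces prescribed on the remaining classes, namely $D_i$ for $i\in D\setminus P$ and $F\cap X_i$ for $i\in[M]\setminus D$ — these together assign a subset to every $X_i$ with $i\in[M]\setminus P$. By the Sperner hypothesis on $\FF$ its size is at most $L_P$, giving the desired bound for $\FF(F;D)$; running over all $P\in\binom{D}{k}$ finishes (i). I do not expect a real obstacle here: the entire content is the observation that $E\mapsto E\setminus F$ is a multiplicity-respecting bijection on the relevant domain, and the one thing worth double-checking is precisely that this injectivity makes the multiset definition $\multiset{C^{k_{C_1,\dots}}:\dots}$ reduce to a reindexing rather than a genuine sum; after that, both parts are a matter of unwinding the definitions.
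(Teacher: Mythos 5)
Your proof is correct and matches the paper's (implicit) approach: the paper declares the lemma ``clear from the definitions'' and gives no proof, and your argument is exactly the definition-unwinding one would write if asked to justify that remark. The bookkeeping observation that $E\mapsto E\setminus F$ is a multiplicity-preserving bijection on $\{E: E\cap\biguplus_{i\notin D}X_i=F\}$, that it leaves the traces on the $X_j$ with $j\in D$ untouched, and that the Sperner constraint for $\FF(F;D)$ at $P\in\binom{D}{k}$ is therefore the Sperner constraint for $\FF$ at the same $P$ with the traces on $X_i$ for $i\in[M]\setminus D$ fixed to $F\cap X_i$, is precisely the content being glossed over.
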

\noindent{\sl Proof to Theorem~\ref{ujBLYM}:}
First assume $M=k$, and call our multi-family $\FF^{\prime}$
instead of  $\FF$. For each $i\in [M]$, there are $m_i!$ (simple) chains of maximum size (i.e. of length  $n_i$) in $X_i$.  We count the number of ordered $(k+1)$-tuples in the following multiset
in two ways:
\begin{equation*}
\Bmultiset{(E,\CC_1,\ldots,\CC_k)^{\#[E,\FF^{\prime}]}:\,\,E\in\prod_{i\in[M]}\CC_i, \hbox{\  where \ } \CC_i \hbox{\  is  chain of size $n_i$ in\ } X_i}.
\end{equation*}
Since each
chain product $\prod_{i=1}^M \CC_i$ contains at most $L_{[M]}$ sets from $\FF^{\prime}$ by definition,
the number of such $(k+1)$-tuples
is at most $L_{[M]}\prod_{i\in [M]} m_i!$. Since
each $E\in \FF^{\prime}$ can be extended to precisely $\prod_{i\in [M]} |E\cap X_i|!(m_i-|E\cap X_i|)!$  chain products with each chain being maximum size,
we have that
\begin{eqnarray*}
\sum_{E\in \FF^{\prime}} \prod_{i\in [M]} |E\cap X_i|!(m_i-|E\cap X_i|)!\le L_{[M]}\prod_{i\in [M]}m_i!
\end{eqnarray*}
from which the claimed inequality follows in the form
$$
\sum_{E\in \FF^{\prime}}\frac{1}{\prod\limits_{i\in [M]}\binom{m_i}{|E\cap X_i|}}\le L_{[M]}.
$$
Now assume $M>k$ and
take an arbitrary  $P\in\binom{[M]}{k}$ to prove the theorem for our multi-family $\FF$.
Take an  $F \subseteq X\setminus\bigcup_{i\in P}X_i$, and assume
$f_i=|F \cap X_i|$ for $i\notin P$. By Lemma~\ref{lemma:FF(F)}, $\FF(F;P)$ is a $k$-dimensional  $k$-part 
Sperner multi-family with parameter $L_P$, and therefore, using Theorem~\ref{ujBLYM} we get
\begin{equation}\label{eq-L-MLYM2}
\sum_{E\in\FF(F;P)}\frac{1}{\prod\limits_{i\in P} \binom{m_i}{|E\cap X_i|}}\leq L_P.
\end{equation}
From this we can write for any fixed sequence $f_i$ $(i\notin P)$:
$$
\sum_{\genfrac{}{}{0pt}{}{F: F\subseteq X\setminus \bigcup_{i\in P}X_i} {|F\cap X_i|=f_i,
i\notin P}} \sum_{E\in\FF(F;P)} \frac{1}{{\prod\limits_{i\in P} \binom{m_i}{|E\cap X_i|}} 
\prod\limits_{i\notin P}\binom{m_i}{f_i}}\leq L_P.
$$
Finally, summing up the previous inequality for $f_i=0,1,\ldots,m_i$, for all $i\notin P$,
we obtain the theorem. \hfill\qed\\

To prove Theorem~\ref{lemma:k=M}, we first need the following definitions: Let $\FF\Subset X$ be an non-empty $M$-part
multi-family, and
let $j\in[M]$. We define $\high(\FF)$ and $\low(\FF)$ as the largest and smallest levels in $X_j$ that the trace
$\FF_{X_j}$ in $X_j$ intersects. With other words,
\begin{eqnarray*}
\high(\FF)&=&\max\left\{q\in[n_j]^{\star}: \FF_{X_j}\cap\binom{X_j}{q}\ne\emptyset\right\},\\
\low(\FF)&=&\min\left\{q\in[n_j]^{\star}: \FF_{X_j}\cap\binom{X_j}{q}\ne\emptyset\right\}.
\end{eqnarray*}
First, we will need the following:
\begin{lemma}\label{lemma:shadow}
Let $M>1$, $j\in[M-1]$ and let $\FF^{\prime}$ be an $M$-dimensional $M$-part Sperner family with
$\high(\FF^{\prime})>\low(\FF^{\prime})$ that satisfies {\rm (\ref{k=M:equality})}, and let $E_0\in\FF^{\prime}_{X_M}$ be fixed.
Then there is an $M$-dimensional $M$-part Sperner family $\FF$ 
that also satisfies {\rm (\ref{k=M:equality})} such that for all $i\in[M]\setminus\{j\}$ we have $\FF_{X_i}\subseteq\FF^{\prime}_{X_i}$,
$E_0\in\FF_{X_M}$ and $\high(\FF)-\low(\FF)=\high(\FF^{\prime})-\low(\FF^{\prime})-1$.
\end{lemma}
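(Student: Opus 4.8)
\textbf{Plan of proof for Lemma~\ref{lemma:shadow}.}

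The plan is to use a compression argument in the $j^{\text{th}}$ coordinate.  Since $\high(\FF')>\low(\FF')$, we are allowed to reduce the spread $\high(\FF')-\low(\FF')$ by one without destroying the equality~(\ref{k=M:equality}).  The natural operation is the \emph{down-shift} (or equally well, the \emph{up-shift}) of the highest level that occurs in the trace $\FF'_{X_j}$ --- but only down-shift those members $F\in\FF'$ whose $j$-th trace $F\cap X_j$ lies on level $\high(\FF')$, and only if the resulting set is still ``available'' (i.e.\ the shift cannot collide with an existing member).  Because $\FF'$ is an $M$-dimensional $M$-part Sperner family with all $L_P=1$, the condition on $\FF'$ is exactly that the map $F\mapsto (F\cap X_1,\dots,F\cap X_M)$ is injective; we will need to show that a carefully chosen compression preserves this injectivity and preserves~(\ref{k=M:equality}).

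The key steps, in order, are as follows.  First I would fix $j\in[M-1]$ (so that $X_M$, hence the distinguished set $E_0$, is untouched), set $t=\high(\FF')$, and split $\FF'$ according to the $j$-th trace level: write $\FF'=\FF'_{=t}\uplus\FF'_{<t}$, where $F\in\FF'_{=t}$ iff $|F\cap X_j|=t$.  Second, for each $F\in\FF'_{=t}$ I would apply a one-step shadow-type shift in $X_j$: replace $F\cap X_j$ by a $(t-1)$-element subset of it, uniformly across the members sharing the same trace on $\biguplus_{i\ne j}X_i$.  The right bookkeeping is to apply the classical normalized-shadow / Kruskal--Katona local move levelwise inside the single class $X_j$: for a family on level $t$ of $2^{X_j}$ whose normalized measure is $\sum 1/\binom{m_j}{t}$, its lower shadow has normalized measure at least as large, and for the purpose of the equation~(\ref{k=M:equality}) we only need that when we \emph{move} a member down one level in $X_j$ we multiply its weight $1/\prod_i\binom{m_i}{|F\cap X_i|}$ by $\binom{m_j}{t}/\binom{m_j}{t-1}$, which is $\le 1$ as long as $t>m_j/2$ and $\ge 1$ if $t\le m_j/2$; so the move I actually perform is: push the extreme level that is \emph{farther from} $m_j/2$ toward $m_j/2$, thereby not decreasing the sum, and then argue by the equality hypothesis that it cannot increase it either, so the weights balance exactly.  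Third, I would check the three required properties of the new family $\FF$: (a) $\FF_{X_i}\subseteq\FF'_{X_i}$ for $i\ne j$, which is immediate because the shift only changes $j$-th traces and never creates a new $i$-th trace for $i\ne j$; (b) $E_0\in\FF_{X_M}$, immediate since $M\ne j$ and the shift does not delete any member whose $M$-th trace was $E_0$ (one must take a little care to choose the shift so that no collision forces the deletion of \emph{all} members with $M$-trace $E_0$ --- here the injectivity/Sperner condition helps, since distinct members sharing $M$-trace $E_0$ differ elsewhere); (c) the spread in $X_j$ drops by exactly one, i.e.\ $\high(\FF)-\low(\FF)=\high(\FF')-\low(\FF')-1$, which is arranged by emptying the extreme level $t$ (or the extreme level $\low(\FF')$, whichever we chose to compress) entirely.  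Fourth, I would verify that $\FF$ is still an $M$-dimensional $M$-part Sperner family with $L=1$: a down-shift in a single coordinate preserves the property that the profile-trace map is injective, because the shifting operation, applied consistently to all members with a common trace off $X_j$, is itself injective on each such fiber, and the fibers over distinct off-$X_j$ traces are disjoint and remain disjoint.

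The main obstacle I expect is (c) combined with (a): after one local shadow-shift the top level $t$ need not be completely emptied --- some $F\in\FF'_{=t}$ may have its shift ``blocked'' (the target set already belongs to $\FF'$), and leaving a single survivor on level $t$ would fail the spread-reduction requirement while repairing it by deleting survivors would risk violating the equality~(\ref{k=M:equality}).  The way I would resolve this is to exploit the equality hypothesis to show \emph{no} collisions actually occur on the extreme level when we shift the right way: if a collision occurred, the shift would strictly increase the normalized sum (since the moved weight would ``double up'' on an already-occupied profile while we are summing over a set, a collision forces us to drop a term, and the remaining configuration would have to compensate), contradicting that both the old and new families attain the value $1$ in~(\ref{k=M:equality}); equivalently, since~(\ref{k=M:equality}) with value $1$ is the tight BLYM-type bound for $M$-dimensional $M$-part Sperner families (the case $L_{[M]}=1$), the family must be ``as spread out as the equation allows'', and one checks that blocked shifts would put it strictly inside the feasible region.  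Making this contradiction precise --- essentially a strict-monotonicity statement for the one-step $X_j$-compression under the equality constraint --- is the technical heart of the argument, and is where I would spend the bulk of the work; once it is in place, (a), (b), and the Sperner-preservation are routine.
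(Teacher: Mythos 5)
Your high-level plan (compress the top $X_j$-level downward, argue via an LYM-type extremality that the compression is forced to be lossless, then tidy up $E_0$) is the right shape, but the mechanics you sketch would not go through as stated, and the piece you label as the technical heart --- the strict-monotonicity of the one-step compression --- is missing exactly the ingredient that makes the paper's proof work.

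The specific gap: you propose to replace each $F$ on the top level by ``a $(t-1)$-element subset'' of $F\cap X_j$, i.e.\ a one-to-one pushdown. This cannot preserve~(\ref{k=M:equality}) in general, because moving a set from level $t$ to level $t-1$ in $X_j$ multiplies its LYM-weight by $\binom{m_j}{t}/\binom{m_j}{t-1}$, which is not $1$. Your remedy --- always push the level farther from $m_j/2$ toward the middle, so the weight factor is $\ge 1$ --- gives a lower bound on the new sum, but since the new family must itself satisfy the BLYM upper bound $\le 1$, you would need an \emph{equality} to propagate, and a single collision or blocked shift breaks that argument rather than producing the contradiction you claim. The paper instead uses a one-to-\emph{many} replacement: setting $\BB=\{B_1,\dots,B_s\}=\FF'_{X_j}\cap\binom{X_j}{t}$, ordering the $B_i$ by decreasing weight $w(B_i)=\sum_{E:\,E\cap X_j=B_i}1/\prod_{i\ne j}\binom{m_i}{|E\cap X_i|}$, and partitioning the shadow $\partial\BB=\Bb'_1\cup\dots\cup\Bb'_s$ with $\Bb'_i=\partial(\Bb_i)\setminus\partial(\Bb_{i-1})$ (where $\Bb_i=\{B_1,\dots,B_i\}$). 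Each fiber $\EE_i=\{B_i\}\times\HH_i$ is replaced by $\Bb'_i\times\HH_i$. The normalized shadow inequality $|\partial(\Bb_i)|/\binom{m_j}{t-1}\ge i/\binom{m_j}{t}$ together with Abel summation then shows the new LYM-sum is $\ge$ the old one, and the Sperner bound forces equality, which forces $\BB=\binom{X_j}{t}$ --- a full level. This is the genuine content, and it has no analogue in your one-to-one shift.

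Two further points. First, ``collisions'' are not really the obstruction: the paper checks directly that $\EE^{\star}\cap\FF'=\emptyset$, because each $A^\star\in\Bb'_i\times\HH_i$ has a witness $A\in\EE_i\subseteq\FF'$ with $A^\star\cap X_j\subsetneq A\cap X_j$ and $A^\star,A$ agree off $X_j$, so $A^\star\in\FF'$ would violate the $L_{[M]}=1$ Sperner condition. Second, your handling of $E_0$ is too optimistic: the downward compression \emph{can} remove all members whose $X_M$-trace is $E_0$ (e.g.\ if every such member sits on the top $X_j$-level with its $\Bb'_i$ empty). The paper's fix is cleaner: if $E_0$ survives in $(\FF'\setminus\EE)_{X_M}$ you are done; otherwise $E_0\in\EE_{X_M}$, and then you run the symmetric upward compression on the \emph{bottom} level $\DD$ instead, which leaves $\EE$ untouched (since $\high>\low$ implies $\DD\ne\BB$), so $E_0$ survives. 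You should make this dichotomy explicit rather than gesturing at care in choosing the shift.
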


\begin{proof}
Let $t=\high(\FF^{\prime})$ and
$\BB=\{B_1,\ldots,B_s\}= \FF^{\prime}_{X_j}\cap \binom {X_j}{t}$.
For $i\in[s]$ let
 $\EE_i=\{E\in \FF^{\prime}: E\cap X_j= B_i\}$ and $\EE=\cup_{i=1}^s\EE_i$.
 Given $A\subseteq X_j$, we define
 $$w(A)=\sum_{E\in\FF^{\prime}: E\cap X_j=A}\frac{1}{\prod\limits_{i:i\ne j}\binom {m_i}{|E\cap X_j|}}.$$
We  also assume (w.l.o.g.) that $w(B_1)\geq\ldots\geq w(B_s)$; we will use $w_i:=w(B_i)$.
Using this notation we can rewrite (\ref{k=M:equality}) as
 \begin{equation*}
 \sum_{A\subseteq X_j}\frac{w(A)}{\binom{m_j}{|A|}}=1,
 \end{equation*}
 or equivalently
 \begin{equation}\label{k=M:equality2}
 \sum_{i=1}^s\frac{w_i}{\binom {m_j}{t}}+\delta=1;~~\delta:=\sum_{A\in \FF^{\prime}_{X_j}\setminus\BB}\frac{w(A)}{\binom{m_j}{|A|}}.
 \end{equation}
Recall the following well-known fact (see e.g. \cite{engelbook})
 that for every $t\in[n]$ and a  subset $\AA\subseteq \binom{[n]}{t}$
we have
\begin{equation}\label{shadow}
\frac{|\partial(\AA)|}{\binom{n}{t-1}}\geq\frac{|\AA|}{\binom{n}{t}},
\end{equation}
 where $\partial(\AA)$, called the {\sl lower shadow} of $\AA$, is defined as
$\partial(\AA)=\{E\in \binom{[n]}{t-1}: E\subsetneq F~ \text{for some}~ F\in \AA\}.$
Moreover, equality in (\ref{shadow}) holds if and only if $\AA=\binom{[n]}{t}$.

Similar inequality holds for the {\sl upper shadow} $\upshad(\AA)$ of $\AA$ defined as 
$\upshad(\AA)=\{E\in \binom{[n]}{t+1}: E\supsetneq F~ \text{for some}~ F\in \AA\}$,
that is $|\upshad(\AA)|/\binom {n}{t+1}\geq |\AA|/\binom {n}{t}$ (with equality if and only if $\AA=\binom{[n]}{t}$).

Let us denote $ \Bb_i=\{B_1,\ldots,B_i\};~ i=1,\ldots,s$~ (thus $\Bb_i\subsetneq \Bb_{i+1}$ and $\Bb_s=\BB$).
We define then the following partition of $\partial(\BB)=\Bb^{\prime}_1\cup\ldots\cup \Bb^{\prime}_s$:
 $$
 \Bb^{\prime}_1=\partial(\Bb_1),  ~\Bb^{\prime}_i=\partial(\Bb_i)\setminus\partial(\Bb_{i-1});~ i=2,\ldots,s.
$$

Then, in view of (\ref{shadow}),  we have
\begin{equation}\label{shadow-match}
\frac{\sum_{\ell=1}^i|\Bb^{\prime}_{\ell}|}{\binom{m_j}{t-1}}=\frac{|\partial(\Bb_i)|}{\binom{m_j}{t-1}}\geq\frac{|\Bb_i|}{\binom{m_j}{t}}=\frac{i}{\binom{m_j}{t}};~ i=1,\ldots,s,
\end{equation}
with strict inequality if $s<\binom{m_j}{t}$.

Recall that  $\AA$ is an $M$-dimensional $M$-part Sperner family with parameter $L_{[M]}=1$ precisely when for
all $A,B\in\AA$ with $A\ne B$.
there is an $\ell\in[M]$ such that $A\cap X_{\ell}$ and $B\cap X_{\ell}$ are incomparable by the subset relation.

For ease of description, let us represent each family $\EE_i$, defined above, by the direct product $\EE_i=\{B_i\}\times\HH_i$,
 where $\HH_i=\FF^{\prime}(B_i;[M]\setminus\{j\})$ 
 is an $(M-1)$-dimensional $(M-1)$-part Sperner family in the partition set $\biguplus_{i\in[M]\setminus\{j\}} X_i$.

We now construct a new family $\EE^{\star}$ from $\EE$ as follows. We replace each $\EE_i$ by $\EE^{\star}_i:=\Bb^{\prime}_i\times \HH_i;~ i=1,\ldots,s$ and
define $\EE^{\star}=\cup_{i=1}^s\EE^{\star}_i$. Observe now that  
for each $A^{\star}\in\EE^{\star}_i$ there is an $A\in\EE_i$ such that $A^{\star}\cap X_{\ell}=A\cap X_{\ell}$ for all $\ell\in[M]\setminus\{j\}$ and
$A^{\star}\cap X_j\subsetneq A\cap X_j$. This implies that
 $\EE^{\star}\cap \FF^{\prime}=\emptyset$, since $\FF^{\prime}$ is an $M$-dimensional $M$-part Sperner family with parameter $L_{[M]}=1$.
  
 Moreover, it is not hard to see that $\FF^{\star}:=(\FF^{\prime}\setminus \EE)\cup\EE^{\star}$
 is an $M$-dimensional $M$-part Sperner family with parameter $L^{\star}_{[M]}=1$. If we have that $A,B$ are different elements of
 $\FF^{\prime}\setminus\EE$, then the required property follows from the fact that $A,B$ are both elements of $\FF^{\prime}$.
 If $A^{\star},B^{\star}$ are different elements of $\EE^{\star}$, then either $A^{\star}\cap X_j$ and $B^{\star}\cap X_j$ are
 both incomparable, or $A^{\star},B^{\star}\in\EE_i$ for some $i$, in which case the corresponding sets 
 $A,B\in\EE_i\subseteq \FF^{\prime}$ agree
 with $A^{\star},B^{\star}$ on $X\setminus X_j$ and $A^{\star}\cap X_j=B^{\star}\cap X_j=B_i$, from which the required property follows. Finally, take $A^{\star}\in \EE^{\star}_i$ for some $i$ and $B\in\FF^{\prime}\setminus\EE$, and let 
 $A\in\EE_i\subseteq\FF^{\prime}$ 
 be the corresponding set.
 If $A\cap X_j$ and $B\cap X_j$ are comparable, then from the fact that $t$ was the largest level of $\FF^{\prime}_{X_j}$ we get
 that $B\cap X_j\subseteq A^{\star}\cap X_j\subsetneq B_i=A\cap X_j$; and from the fact that $A,B$ are both elements
 of $\FF^{\prime}$ and $A\setminus X_j=A^{\star}\setminus X_j$ the required property follows. 
 
 Therefore
 $\FF^{\star}$ is an $M$-dimensional $M$-part Sperner family with parameter $1$. 
Thus, for $\FF^{\star}$ the following inequality must hold:
\begin{equation}\label{new-BLYM}
\sum_{E\in\FF^{\star}}\frac{1}{\prod\limits_{i=1}^M \binom{m_i}{|E\cap X_i|}}
=\sum_{i=1}^s\frac{|\Bb^{\prime}_i|\cdot w_i}{\binom{m_j}{t-1}}+\delta\leq 1.
\end{equation}
On the other hand,  (\ref{shadow-match}) together with $w_1\geq\ldots\geq w_s\ge 0=:w_{s+1}$ implies that
\begin{equation}\label{weight-shadow}
\sum\limits_{\ell=1}^s\frac{|\Bb^{\prime}_{\ell}|\cdot w_{\ell}}{\binom{m_j}{t-1}}
=
\sum_{i=1}^{s}\sum_{\ell=1}^{i}\frac{|\Bb^{\prime}_{\ell}|\cdot(w_{i}-w_{i+1})}{\binom{m_j}{t-1}}
\geq\sum\limits_{i=1}^{s}\frac{i\cdot(w_{i}-w_{i+1})}{\binom{m_j}{t}}
=\sum_{i=1}^s\frac{w_i}{\binom{m_j}{t}}.
\end{equation}

In fact, the latter means that $\BB=\binom{X_j}{t}$, otherwise we have strict inequality in (\ref{weight-shadow})
 a contradiction with (\ref{new-BLYM}), in view of (\ref{k=M:equality2}).
Thus,  for the new family $\FF^{\star}$ we have
$$
\sum_{E\in\FF^{\star}}\frac{1}{\prod\limits_{i=1}^M \binom{m_i}{|E\cap X_i|}}= 1.
$$

Moreover, $\high(\FF^{\star})=\high(\FF^{\prime})-1$ and $\low(\FF^{\star})=\low(\FF^{\prime})$, so
$\high(\FF^{\star})-\low(\FF^{\star})=\high(\FF^{\prime})-\low(\FF^{\prime})-1$.
In addition, for all $\ell\in[M]\setminus\{j\}$ we have $\EE^{\star}_{X_{\ell}}\subseteq\EE_{X_{\ell}}$, therefore
$\FF^{\star}_{X_{\ell}}\subseteq\FF^{\prime}_{X_{\ell}}$. Therefore, if $E_0\in(\FF^{\prime}\setminus\EE)_{X_M}$, i.e.
the trace of $\FF^{\prime}\setminus\EE$ in $X_M$ contains $E_0$, then setting $\FF:=\FF^{\star}$ will give the required family.

If $E_0\notin(\FF^{\prime}\setminus\EE)_{X_M}$, then, since 
$\FF^{\prime}_{X_M}\setminus\EE_{X_M}\subseteq(\FF^{\prime}\setminus\EE)_{X_M}$ and $E_0\in\FF^{\prime}_{X_M}$ we
must have that $E_0\in\EE_{X_M}$.
Similar to the described "pushing down" transformation in $\FF^{\prime}$ we can apply "pushing up" transformation  with respect to the smallest level $\DD$ in $\FF^{\prime}_{X_j}$, replacing it by its upper shadow $\upshad(\DD)$ to obtain
the new family $\FF$.  
Since $\DD\ne\BB$, we now have $\EE\subseteq\FF$, therefore $E_0\in\FF_{X_M}$.
All other required conditions
follow as before.
\end{proof}

\noindent{\sl Proof to Theorem~\ref{lemma:k=M}}: 
Let $\FF^{\prime}$ be an $M$-dimensional $M$-part Sperner family with parameter $1$ satisfying (\ref{k=M:equality}).
Without loss of generality assume, contrary to the statement of the theorem, that the trace $\FF^{\prime}_{X_M}$ 
of $\FF^{\prime}$ in $X_M$ 
contains an incomplete level, i.e. there is a $y_M\in[n_M]^{\star}$ such that for 
$\GG=\FF^{\prime}\cap\binom{X_M}{y_M}$ we have that $\emptyset\subsetneq\GG\subsetneq\binom{X_M}{y_M}$.  Fix an 
$E_0\in\GG$.

Let $\FF^{(0)}:=\FF^{\prime}$. We will define a sequence $\FF^{(1)},\ldots,\FF^{(M-1)}$ of $M$-dimensional $M$-part Sperner families such that for each $\ell\in[M-1]$ the following hold:
\begin{enumerate}[(i)]
\item Equality
(\ref{k=M:equality}) holds for $\FF^{(\ell)}$, with other words
\begin{equation}\label{eq:newer-family}
\sum_{E\in\FF^{(\ell)}}\frac{1}{\prod\limits_{i=1}^M \binom{m_i}{|E\cap X_i|}}=1.
\end{equation}
\item There is a $y_{\ell}\in[n_{\ell}]^{\star}$ such that $\FF^{(\ell)}_{X_{\ell}}\subseteq\binom{X_{\ell}}{y_{\ell}}$, with
other words the trace of $\FF^{(\ell)}$ in $X_{\ell}$ consist of a single (not necessarily full) level.
\item For each $i\in[M]\setminus\{\ell\}$, $\FF^{(\ell)}_{X_i}\subseteq\FF^{(\ell-1)}_{X_i}$.
\item $E_0\in\FF^{(\ell)}_{X_M}$.
\end{enumerate}
Once this sequence is defined, it follows that for all $j\in[M-1]$ we have that $\FF^{(M-1)}_{X_j}\subseteq\binom{X_j}{y_j}$,
also $E_0\in\left(\FF^{(M-1)}_{X_M}\cap\binom{X_{M}}{y_M}\right)\subseteq\GG\subsetneq\binom{X_{M}}{y_M}$, therefore
the trace of $\FF^{(M-1)}$ in $X_M$ contains an incomplete level.

 Also,
 for all $F\in X\setminus X_M$ we must have that
 $\FF^{(M-1)}(F;\{M\})$ is a $1$-dimensional $1$-part Sperner family with parameter $1$, 
 therefore it satisfies (\ref{eq:multi-BLYM}) with the parameter set to $1$.
In view of these facts, using (\ref{eq:newer-family}) for $\ell=M-1$ we get that
\begin{eqnarray*}
1&=&\sum_{E\in\FF^{(M-1)}}\frac{1}{\prod\limits_{i=1}^{M}\binom{m_i}{|E\cap X_i|}}\\
&=&
\frac{1}{\prod\limits_{i=1}^{M-1}\binom{m_i}{t_i}}\sum_{F\subseteq X\setminus X_M}
\left(\sum_{E\in\FF^{(M-1)}(F;\{M\})}\frac{1}{\binom{m_M}{|E\cap X_M|}}\right)\\
&\le&\frac{1}{\prod\limits_{i=1}^{M-1}\binom{m_i}{t_i}}\sum_{F\subseteq X\setminus X_M} 1
=1.
\end{eqnarray*}
This implies that for all $F\subseteq X\setminus X_M$, (\ref{eq:multi-BLYM}) holds with equality for $\FF^{(M-1)}(F,\{M\})$, 
so by Lemma~\ref{multiset}
we get that $\FF^{(M-1)}(F,\{M\})$ is a full level. Since
$$\FF^{(M-1)}_{X_M}=\bigcup_{F\subseteq X\setminus X_M}\FF^{(M-1)}(F,\{M\}),
$$
this implies that $\FF^{(M-1)}_{X_M}$ must consist of full levels only, a contradiction.

Note that $\FF^{(0)}$ is defined, it satisfies (\ref{k=M:equality}), and it does not need to satisfy any other conditions.
All that remains to show is that $\FF^{(\ell)}$ can be defined for each $\ell\in[M-1]$ such that it satisfies
the conditions (i)--(iv). 

To this end, assume that $j\in[M-1]$ and $\FF^{(j-1)}$ is already given satisfying all required conditions.
Let $Q=\high(\FF^{(j-1)})-\low(\FF^{(j-1)})$. If $Q=0$,
then $\FF^{(j-1)}_{X_j}$ consists of a single, not necessarily full, level, and we set $\FF^{(j)}=\FF^{(j-1)}$; (i)--(iv) are
clearly satisfied.

If $Q>0$, then let $\KK^{(0)}=\FF^{(j-1)}$. By Lemma~\ref{lemma:shadow} we can define a sequence 
$\KK^{(1)},\ldots,\KK^{(Q)}$ of $M$-dimensional $M$-part Sperner families with parameter $1$ such that for all
$\ell\in[Q]$ the following hold:
\begin{enumerate}[(a)]
\item $\KK^{(\ell)}$ satisfies {\rm (\ref{k=M:equality})}.
\item For all $i\in[M]\setminus\{j\}$ we have $\KK^{(\ell)}_{X_i}\subseteq\KK^{(\ell-1)}_{X_i}$.
\item $E_0\in\KK^{(\ell)}_{X_M}$.
\item $\high(\KK^{(\ell})-\low(\KK^{(\ell)})=\high(\KK^{(\ell-1)})-\low(\KK^{(\ell-1)})-1$.
\end{enumerate}
It follows that $\high(\KK^{(Q)})=\low(\KK^{(Q)})$ and we set $\FF^{(j)}=\KK^{(Q)}$; (i)--(iv) are clearly satisfied.
\hfill\qed
\bigskip

It only remains to prove Theorem~\ref{ujBLYM=}. We will start with a series of lemmata. The first lemma states for 
multi-families what Theorem 6.2 in \cite{tour} stated for simple families:
\begin{lemma}\label{1BLYM=} Let $1\le M$ and $\FF$ be a $1$-dimensional $M$-part Sperner multi-family with parameters 
$L_{\{i\}}$ for  $i\in[M]$ satisfying
{\rm(\ref{allBLYM})} with equalities, i.e. 
\begin{equation}\label{all1BLYM}
\forall i\in[M]\,\,\,\,\,\,\,\,\,
\sum_{(i_1,\ldots,i_M)\in\pi_M}\frac{p_{i_1,\ldots,i_M}}{\prod\limits_{j=1}^M \binom{m_j}{i_j}}=\frac{L_{\{i\}}}{n_i}\prod_{j=1}^M n_j.
\end{equation} 
Then $\FF$ is homogeneous. 
\end{lemma}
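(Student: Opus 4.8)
The plan is to show that equality in each of the $M$ inequalities of (\ref{all1BLYM}) \emph{localizes}: it forces a one-part BLYM equality on every ``fiber'' of $\FF$ obtained by freezing all coordinates but one. Lemma~\ref{multiset} then pins down the multiplicities on each such fiber, and a coordinate-swapping argument assembles these local facts into global homogeneity. No induction on $M$ is needed: the case $M=1$ is just the degenerate instance in which there is only one fiber, namely $\FF$ itself, and it reduces directly to the equality clause of Lemma~\ref{multiset}.

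First I would fix $i\in[M]$ and unwind the proof of Theorem~\ref{ujBLYM} in the case $P=\{i\}$. Grouping the elements of $\FF$ according to the set $F=E\cap(X\setminus X_i)$ they cut out of $X\setminus X_i$, and using $\#[E\setminus F,\FF(F;\{i\})]=\#[E,\FF]$, one obtains the identity
$$
\sum_{(i_1,\ldots,i_M)\in\pi_M}\frac{p_{i_1,\ldots,i_M}}{\prod_{j=1}^M\binom{m_j}{i_j}}
=\sum_{F\subseteq X\setminus X_i}\frac{1}{\prod_{j\ne i}\binom{m_j}{|F\cap X_j|}}\sum_{E\in\FF(F;\{i\})}\frac{1}{\binom{m_i}{|E\cap X_i|}}.
$$
By Lemma~\ref{lemma:FF(F)}(i), each $\FF(F;\{i\})$ is a $1$-dimensional $1$-part Sperner multi-family on $X_i$ with parameter $L_{\{i\}}$, i.e. a multi-family of subsets of $X_i$ with no multichain of length $L_{\{i\}}+1$; so Lemma~\ref{multiset} gives $\sum_{E\in\FF(F;\{i\})}1/\binom{m_i}{|E\cap X_i|}\le L_{\{i\}}$ for each $F$. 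Summing this bound against the strictly positive weights $1/\prod_{j\ne i}\binom{m_j}{|F\cap X_j|}$, whose total over $F\subseteq X\setminus X_i$ equals $\prod_{j\ne i}n_j$, recovers exactly the right-hand side $\frac{L_{\{i\}}}{n_i}\prod_{j=1}^M n_j$ of (\ref{all1BLYM}). Because every weight is positive, equality in (\ref{all1BLYM}) for the index $i$ forces $\sum_{E\in\FF(F;\{i\})}1/\binom{m_i}{|E\cap X_i|}=L_{\{i\}}$ for \emph{every} $F\subseteq X\setminus X_i$; by the equality clause of Lemma~\ref{multiset}, each $\FF(F;\{i\})$ is homogeneous. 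Reading this back: for every $i$ and every $F\subseteq X\setminus X_i$, the value $\#[E,\FF]$, as $E$ ranges over the sets with $E\cap X_j=F\cap X_j$ for all $j\ne i$, depends only on $|E\cap X_i|$.

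Finally I would assemble these $M$ coordinatewise rigidity statements. Given $E,E'\subseteq X$ with $|E\cap X_j|=|E'\cap X_j|$ for all $j\in[M]$, form a chain $E=E^{(0)},E^{(1)},\ldots,E^{(M)}=E'$ in which $E^{(\ell)}$ arises from $E^{(\ell-1)}$ by replacing its $X_\ell$-part by $E'\cap X_\ell$ and leaving the other parts unchanged. Consecutive sets $E^{(\ell-1)}$ and $E^{(\ell)}$ agree on every $X_j$ with $j\ne\ell$ and satisfy $|E^{(\ell-1)}\cap X_\ell|=|E\cap X_\ell|=|E'\cap X_\ell|=|E^{(\ell)}\cap X_\ell|$, so the conclusion of the previous paragraph applied with $i=\ell$ gives $\#[E^{(\ell-1)},\FF]=\#[E^{(\ell)},\FF]$; chaining over $\ell$ yields $\#[E,\FF]=\#[E',\FF]$. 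Hence the profile vector of a set determines its multiplicity in $\FF$, which is precisely the assertion that $\FF$ is homogeneous.

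I do not anticipate a real obstacle. The only points requiring care are checking that equality in the \emph{summed} inequality (\ref{all1BLYM}) genuinely descends to each fiber $F$ (immediate, since the coefficients are positive) and that Lemma~\ref{multiset} is being applied to a bona fide $1$-part multi-family, which is exactly what Lemma~\ref{lemma:FF(F)} supplies. It is worth noting that equality also forces each fiber $\FF(F;\{i\})$ to be nonempty, since $L_{\{i\}}\ge1$; this causes no trouble.
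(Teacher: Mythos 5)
Your proof is correct and follows essentially the same route as the paper's: for each fixed $i$, pass to the fibers $\FF(F;\{i\})$, observe that equality in the summed BLYM inequality with strictly positive weights forces equality (and hence, by Lemma~\ref{multiset}, homogeneity) on every fiber, and then chain coordinate-swaps to promote fiber-wise rigidity to global homogeneity. The paper phrases the fiber step via the already-established inequality (\ref{eq-L-MLYM2}) for an arbitrary $F\in\FF$, while you write out the grouped identity explicitly and state the rigidity conclusion for all $A$ (not just $A\in\FF$), which is a slightly cleaner formulation of the same argument.
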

\begin{proof} For $M=1$ the statement is proved in Lemma~\ref{multiset}. 
Let $M\ge 2$ and take an arbitrary $F\in\FF$. We set $F_i=F\cap X_i$ and $G_i=F\setminus F_i$.
By Lemma~\ref{lemma:FF(F)} that for each $j\in[M]$,
$\FF(G_j;\{j\})$ is a ($1$-dimensional $1$-part) Sperner multi-family with parameter $L_{\{j\}}$.
From the proof of Theorem~\ref{ujBLYM} and (\ref{all1BLYM}) we get
that equality must hold in (\ref{eq-L-MLYM2}), i.e.
\begin{equation*}
\sum_{E\in\FF(G_j;\{j\})}\frac{1}{\binom{m_i}{|E\cap X_i|}}=L_{\{j\}},
\end{equation*}
which by Lemma~\ref{multiset} implies that $\FF(G_j;\{j\})$ is homogeneous. In particular this means that
for all $A\in\FF$ for all $j\in[M]$ if $B$ is a set such that
$|A\cap X_j|=|B\cap X_j|$ and
for all $i\in[M]\setminus\{j\}$ we have $A\cap X_i=B\cap X_i$, then
the $\#[A,\FF]=\#[B,\FF]$. 
If $A,B$ are sets with the same profile vector, we define the sequence 
$A=Y_0,Y_1,\ldots,Y_M=B$ by $Y_{i}=(Y_{i-1}\setminus X_{i})\uplus (B\cap X_{i})$ for all $i\in[M]$.
It follows that $\#[Y_{i-1},\FF]=\#[Y_{i},\FF]$, and so $\#[A,\FF]=\#[B,\FF]$. Thus $\FF$ is homogeneous.
\end{proof}
\begin{lemma}\label{kk+1BLYM=} Let $1\le k$ and let $\FF$ be a $k$-dimensional 
$(k+1)$-part Sperner multi-family with parameters 
$L_{[k+1]\setminus\{i\}}$ for $i\in[k+1]$ satisfying
{\rm(\ref{allBLYM})} with equality, i.e. 
\begin{equation}\label{allkk+1BLYM}
\forall i\in[k+1]\,\,\,\,\,\,\,\,\,
\sum_{(i_1,\ldots,i_{k+1})\in\pi_{k+1}}\frac{p_{i_1,\ldots,i_{k+1}}}{\prod\limits_{j=1}^{k+1} \binom{m_j}{i_j}}
=L_{[k+1]\setminus\{i\}}n_i.
\end{equation} 
Then $\FF$ is homogeneous. 
\end{lemma}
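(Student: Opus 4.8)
The plan is to prove homogeneity by showing that $\#[A,\FF]=\#[B,\FF]$ for any two subsets $A,B$ of $X$ having the same profile vector; since exactly $\prod_{j=1}^{k+1}\binom{m_j}{|A\cap X_j|}$ subsets share a prescribed profile vector, this is equivalent to $\Pp(\FF)$ having the homogeneous form. Following the telescoping idea in the proof of Lemma~\ref{1BLYM=}, it suffices to prove $\#[A,\FF]=\#[B,\FF]$ when $A$ and $B$ agree outside a single part $X_j$ and $|A\cap X_j|=|B\cap X_j|$: for general same-profile $A,B$ one then passes through $A=Y_0,Y_1,\dots,Y_{k+1}=B$ with $Y_i=(Y_{i-1}\setminus X_i)\uplus(B\cap X_i)$. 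Moreover, since any two equal-size subsets of $X_j$ are linked by a sequence of single-element exchanges, and each such exchange sits inside a pair of maximal chains of $X_j$ that agree on every level but one, it even suffices to treat the case $|(A\cap X_j)\triangle(B\cap X_j)|=2$.

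First I would push the hypothesis down to the one-part reductions. By Lemma~\ref{lemma:FF(F)}, for $i\in[k+1]$ and $D_i\subseteq X_i$ the multi-family $\FF(D_i;[k+1]\setminus\{i\})$ on $\biguplus_{\ell\ne i}X_\ell$ is a $k$-dimensional $k$-part Sperner multi-family with the single parameter $L_{[k+1]\setminus\{i\}}$. Exactly as in the derivation of Theorem~\ref{ujBLYM} from (\ref{eq-L-MLYM2}), the assumption that $\FF$ meets the $i$-th inequality of (\ref{allBLYM}) with equality forces every such reduction to be nonempty and to meet its own (single) BLYM inequality with equality; and, as is visible from the maximal-chain-product double count in the proof of Theorem~\ref{ujBLYM} (the $M=k$ case), this equality says exactly that every product $\prod_{\ell\ne i}\CC_\ell$ of maximal chains contains precisely $L_{[k+1]\setminus\{i\}}$ members of $\FF(D_i;[k+1]\setminus\{i\})$, counted with multiplicity. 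Translating this back to $\FF$: for any maximal chains $\CC_1,\dots,\CC_{k+1}$, one in each part, with members $C^\ell_0\subsetneq C^\ell_1\subsetneq\dots\subsetneq C^\ell_{m_\ell}$, the grid multiplicity function $\phi(t_1,\dots,t_{k+1})=\#\big[\biguplus_{\ell=1}^{k+1}C^\ell_{t_\ell},\,\FF\big]$ on $\pi_{k+1}$ satisfies $\sum_{t:\,t_i=v}\phi(t)=L_{[k+1]\setminus\{i\}}$ for every $i\in[k+1]$ and every $v\in[n_i]^{\star}$. Thus $\phi$ (viewed as a multiset on $\pi_{k+1}$ with these multiplicities) is a $k$-dimensional multi-transversal meeting each of the $\binom{k+1}{k}$ bounds (\ref{korlat}) with equality --- and since the common value of the left side of (\ref{allBLYM}) forces $L_{[k+1]\setminus\{i\}}n_i$ to be independent of $i$, condition (\ref{konstant}) holds, so by Proposition~\ref{elemi} this $\phi$ is a strength-$1$ mixed orthogonal array with $\lambda(\{i\})=L_{[k+1]\setminus\{i\}}$ (simple when $\FF$ is simple).

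The heart of the matter --- and where I expect the real difficulty --- is to upgrade ``$\phi$ is a strength-$1$ MOA for every choice of the maximal chains $\CC_\ell$'' to ``$\phi$ is independent of that choice'', which is precisely the homogeneity of $\FF$; I would attack this by induction on $k$, the base case $k=1$ being Lemma~\ref{1BLYM=} with $M=2$. The device I would try is to fix, for each $\ell$, two maximal chains of $X_\ell$ agreeing off a single level, form the $2^{k+1}$ resulting grid functions $\phi^{(\varepsilon)}$ ($\varepsilon\in\{0,1\}^{k+1}$), and observe that the signed sum $\sum_{\varepsilon}(-1)^{|\varepsilon|}\phi^{(\varepsilon)}$ is supported on a single point of $\pi_{k+1}$ --- because $\phi^{(\varepsilon)}(t)$ stops depending on $\varepsilon_\ell$ once $t_\ell$ avoids the exchanged level --- while the MOA slice-identities make all its coordinate-slice sums vanish, so it is identically zero; taking the signed sum over all coordinates but one then yields, for each $i_0\in[k+1]$ and each $W\subseteq X_{i_0}$, the vanishing of the corresponding $k$-dimensional ``hypercube difference'' inside the reduced family $\FF(W;[k+1]\setminus\{i_0\})$. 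The obstacle is that these reductions are individually only known to be BLYM-tight $k$-dimensional $k$-part families, which need not be homogeneous (Example~\ref{example1}) and can themselves satisfy all those hypercube differences; so the proof must genuinely exploit the compatibility of the reductions for different parts $i_0$ --- the coupling imposed on them by the Sperner conditions of $\FF$ with $P\ni i_0$ --- and carrying out that coupling is the crux.
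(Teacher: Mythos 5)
Your proposal correctly sets up the stage --- the reductions $\FF(D_i;[k+1]\setminus\{i\})$ via Lemma~\ref{lemma:FF(F)}, the fact that equality in (\ref{allkk+1BLYM}) forces equality in each (\ref{eq-L-MLYM2}), the reformulation as ``every chain-product grid is a strength-$1$ MOA'', and the reduction to single-part transpositions --- and you correctly pinpoint the crux: upgrading ``MOA for every grid'' to ``grid independent of choice''. But you then attempt the inductive step while keeping the dimension $k$ fixed, via the alternating-sum / discrete-derivative device, and, as you yourself observe, this dead-ends: the one-part reductions $\FF(W;[k+1]\setminus\{i_0\})$ are still $k$-dimensional $k$-part families, for which BLYM-equality does not force homogeneity (Example~\ref{example1}), and the ``coupling'' you know you need is never produced. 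So the proposal has a genuine, self-acknowledged gap rather than a complete argument.

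The idea you are missing is a \emph{dimension drop}, not an alternating sum. Fix $j$ and a single maximal chain $F_0\subsetneq\cdots\subsetneq F_{m_j}$ in $X_j$, and form the disjoint union $\FF'=\biguplus_{q=0}^{m_j}\FF(F_q;[k+1]\setminus\{j\})$ on the $k$-partitioned set $\biguplus_{i\ne j}X_i$. Crucially, $\FF'$ is a $(k-1)$-dimensional $k$-part Sperner multi-family: for $P'=[k+1]\setminus\{j,\ell\}$, a sub-chain-product constraint on $\FF'$ with $X_\ell$ fixed is precisely a chain-product constraint on $\FF$ for $P=P'\cup\{j\}=[k+1]\setminus\{\ell\}$ (the chain in $X_j$ being the very $\{F_q\}$ used to form $\FF'$), so $L'_{P'}=L_{[k+1]\setminus\{\ell\}}$ works. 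This is exactly where the ``coupling'' between the one-part reductions and the constraints with $P\ni j$ gets exploited. Summing the equalities (\ref{eq-L-MLYM2}) over $q$ gives $\sum_{E\in\FF'}\bigl(\prod_{i\ne j}\binom{m_i}{|E\cap X_i|}\bigr)^{-1}=n_jL_{[k+1]\setminus\{j\}}$, and since $n_jL_{[k+1]\setminus\{j\}}=n_\ell L_{[k+1]\setminus\{\ell\}}=n_\ell L'_{P'}$, all the BLYM inequalities for the $(k-1)$-dimensional family $\FF'$ hold with equality. The induction hypothesis (on $k$, with base $k=1$ from Lemma~\ref{1BLYM=}) then gives that $\FF'$ is homogeneous, from which the ``agree-off-one-part'' comparison of multiplicities, and hence full homogeneity via the telescoping you already have, follows. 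Your alternating-sum device, by contrast, never lowers $k$, which is why it keeps running into Example~\ref{example1}.
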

\begin{proof} The proof is induction on $k$.
For $k=1$, it is proved in Lemma~\ref{1BLYM=}.
By Lemma~\ref{lemma:FF(F)} we have that for each $j\in[M]$
and each $F\subseteq X_j$, $\FF(F;[k+1]\setminus\{j\})$ is a $k$-dimensional $k$-part Sperner multi-family with parameter
$L_{[k+1]\setminus\{j\}}$. From
the proof of Theorem~\ref{ujBLYM} and (\ref{allkk+1BLYM}) we get
that equality must hold in (\ref{eq-L-MLYM2}), i.e.
\begin{equation}\label{eq-L-k+1LYM}
\sum_{E\in\FF(F;[k+1]\setminus\{j\})}\frac{1}{\prod\limits_{i:i\ne j}\binom{m_i}{|E\cap X_i|}}=L_{[k+1]\setminus\{j\}}.
\end{equation}
Fixing a maximal chain $F_0\subsetneq F_1\subsetneq\cdots\subsetneq F_{m_j}$ in $X_j$,
we get that $\FF^{\prime}=\biguplus_{q=0}^{m_j} \FF(F_q;[k+1]\setminus\{j\})$ is a
 $(k-1)$-dimensional $k$-part Sperner multi-family
with parameters $L^{\prime}_{[k+1]\setminus\{j,\ell\}}:=L_{[k+1]\setminus\{\ell\}}: ~\ell\in[k+1]\setminus\{j\}$, 
moreover, using (\ref{eq-L-k+1LYM}) for
each $F=F_q$ we get that
\begin{equation*}
\sum_{E\in\FF^{\prime}}\frac{1}{\prod\limits_{i:i\ne j}\binom{m_i}{|E\cap X_i|}}
=\sum_{q=0}^{m_j}
\left(\sum_{E\in\FF(F_q;[k+1]\setminus\{j\})}\frac{1}{\prod\limits_{i:i\ne j}\binom{m_i}{|E\cap X_i|}}\right)=n_jL_{[k+1]\setminus\{j\}}.
\end{equation*}
By (\ref{allkk+1BLYM}) we have that
$L_{[k+1]\setminus\{j\}}n_j= L_{[k+1]\setminus\{\ell\}}n_{\ell}=L^{\prime}_{[k+1]\setminus\{j,\ell\}}n_{\ell}$, therefore 
 $\FF^{\prime}$ is homogeneous by the induction hypothesis.
In particular this means that
for all $A\in\FF$ for all $j\in[M]$ if $B$ is a set such that
$|A\cap X_j|=|B\cap X_j|$ and
for all $i\in[M]\setminus\{j\}$ we have $A\cap X_i=B\cap X_i$, then
the $\#[A,\FF]=\#[B,\FF]$. This implies, as in the proof of Lemma~\ref{1BLYM=}, 
that $\FF$ is homogeneous.
\end{proof}
\begin{lemma}\label{kMBLYM=} Let $2\le k\le M-1$ and let $\FF$ be a $k$-dimensional 
$M$-part Sperner multi-family with parameters 
$L_{P}$ for $P\in\binom{[M]}{k}$ satisfying 
{\rm(\ref{allBLYM})} with equalities. Then $\FF$ is homogeneous. 
\end{lemma}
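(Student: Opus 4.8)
The plan is to prove Lemma~\ref{kMBLYM=} by induction on $M$ with $k$ held fixed, exactly in the spirit of Lemmas~\ref{1BLYM=} and \ref{kk+1BLYM=}: the inductive step reduces the number of parts by one. The base case is $M=k+1$, which is precisely Lemma~\ref{kk+1BLYM=}. So assume $M\ge k+2$ (hence $2\le k\le M-2$, a legitimate instance of the statement for $M-1$) and that the lemma holds with $M-1$ in place of $M$. First I would fix an index $j\in[M]$ and a set $F\subseteq X_j$ and pass to the restricted family $\FF(F;[M]\setminus\{j\})$; by Lemma~\ref{lemma:FF(F)} this is a $k$-dimensional $(M-1)$-part Sperner multi-family with parameters $\{L_P:P\in\binom{[M]\setminus\{j\}}{k}\}$, so once I check it satisfies all of its BLYM inequalities with equality, the induction hypothesis will say it is homogeneous.

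The technical heart — and the step I expect to take the most care, though it is essentially bookkeeping rather than a genuine difficulty — is checking that equality in (\ref{allBLYM}) for $\FF$ is inherited by each $\FF(F;[M]\setminus\{j\})$. For this I would revisit the proof of Theorem~\ref{ujBLYM}. Fix $P\in\binom{[M]\setminus\{j\}}{k}$, so automatically $j\notin P$; the proof of Theorem~\ref{ujBLYM} obtains the BLYM inequality for $\FF$ and $P$ by summing the local inequalities $\sum_{E\in\FF(F'';P)}\frac{1}{\prod_{i\in P}\binom{m_i}{|E\cap X_i|}}\le L_P$ over all $F''\subseteq\bigcup_{i\notin P}X_i$ with strictly positive weights $\big(\prod_{i\notin P}\binom{m_i}{|F''\cap X_i|}\big)^{-1}$, so equality in (\ref{allBLYM}) forces equality in each such local inequality. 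Unwinding the definition of $\FF(\cdot;\cdot)$ one sees that $\FF(F'';P)=\big(\FF(F;[M]\setminus\{j\})\big)(F''\setminus F;P)$ whenever $F''\cap X_j=F$, and as $F''$ ranges over such sets, $F''\setminus F$ ranges over all subsets of $\bigcup_{i\notin P,\,i\ne j}X_i$; hence these equalities are exactly the local equalities that, by the proof of Theorem~\ref{ujBLYM} applied now to the $(M-1)$-part family $\FF(F;[M]\setminus\{j\})$, sum up to its BLYM inequality for $P$. Therefore $\FF(F;[M]\setminus\{j\})$ meets all its BLYM inequalities with equality, and is homogeneous by the induction hypothesis.

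Finally I would assemble homogeneity of $\FF$ from homogeneity of all the families $\FF(F;[M]\setminus\{j\})$ exactly as in the proof of Lemma~\ref{1BLYM=}. Homogeneity of $\FF(A\cap X_{j'};[M]\setminus\{j'\})$ for any $j'\ne j$ gives that $\#[A,\FF]=\#[B,\FF]$ whenever $A,B$ agree outside a single part $X_j$ and $|A\cap X_j|=|B\cap X_j|$ (one needs $M\ge 2$ to choose such a $j'$, which holds). Then, given $A,B$ with the same profile vector, the telescoping sequence $A=Y_0,Y_1,\ldots,Y_M=B$ defined by $Y_i=(Y_{i-1}\setminus X_i)\uplus(B\cap X_i)$ has $Y_{i-1}$ and $Y_i$ agreeing outside $X_i$ with $|Y_{i-1}\cap X_i|=|A\cap X_i|=|B\cap X_i|=|Y_i\cap X_i|$, so $\#[Y_{i-1},\FF]=\#[Y_i,\FF]$ for each $i$, whence $\#[A,\FF]=\#[B,\FF]$. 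Thus $\FF$ is homogeneous, completing the induction. Apart from Lemma~\ref{lemma:FF(F)}, the only inputs are the proofs of Theorem~\ref{ujBLYM} and Lemma~\ref{1BLYM=}, reused verbatim.
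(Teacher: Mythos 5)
Your proof is correct as a reduction and it is a genuinely different route from the paper's. You keep the dimension $k$ fixed and induct on $M$, shaving off one part at a time: you show that each restricted family $\FF(F;[M]\setminus\{j\})$ inherits all of its BLYM equalities from those of $\FF$ (because equality in~(\ref{allBLYM}) forces equality in every local inequality~(\ref{eq-L-MLYM2}), and the local inequalities of $\FF(F;[M]\setminus\{j\})$ are a subcollection of those of $\FF$), apply the induction hypothesis, and reassemble by the telescoping argument of Lemma~\ref{1BLYM=}. The paper instead performs a one-shot reduction that lowers \emph{both} parameters: it restricts to a $(k+1)$-set $D$ of parts via $\FF(F;D)$, then, for a fixed $j\in D$ and a fixed maximal chain $G_0\subsetneq\cdots\subsetneq G_{m_j}$ in $X_j$, forms the union $\FF^\star=\biguplus_q \FF(F\uplus G_q;D\setminus\{j\})$, which is a $(k-1)$-dimensional $k$-part family meeting its BLYM bounds with equality; homogeneity is then pulled from the already-established $(k-1)$-dimensional, $(k-1)+1$-part case (Lemma~\ref{kk+1BLYM=} applied with $k-1$, or Lemma~\ref{1BLYM=} when $k=2$). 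Your argument buys simplicity: it needs only the restriction operation $\FF\mapsto\FF(F;\cdot)$ and the case $(k,k+1)$ as its base, avoiding the chain-union construction entirely; in particular your final "reassembly" step — that pairwise agreement of multiplicities across parts telescopes to full homogeneity — is more transparent than the paper's corresponding passage from homogeneity of $\FF^\star$ back to $\FF$. What the paper's approach buys is a non-inductive, one-shot reduction to the case of fewer parts and smaller $k$, bypassing an explicit induction on $M$. Both proofs ultimately rest on Lemma~\ref{kk+1BLYM=}, which you correctly treat as given.
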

\begin{proof} The proof is essentially the same as the proof of Lemma~\ref{kk+1BLYM=}.
If $M=k+1$, we are done by Lemma~\ref{kk+1BLYM=}. If $M>k+1$, by Lemma~\ref{lemma:FF(F)}
we get that for each $D\in\binom{[M]}{k+1}$ and
$F\subseteq X\setminus \biguplus_{i\in D} X_i$, 
$\FF(F;D)$ is a $k$-dimensional $(k+1)$-part Sperner multi-family with parameters
$L_{P}:~P\in\binom{D}{k}$. Fix an $F\subseteq X\setminus \biguplus_{i\in D} X_i$, and set $\FF^{\prime}=\FF(F;D)$.
For any $j\in D$ and $G\subseteq X_j$ we have
that $\FF^{\prime}(G;D\setminus\{j\})=\FF(F\uplus G;D\setminus \{j\})$ and $\FF^{\prime}(G;D\setminus\{j\})$
is a $k$-dimensional $k$-part Sperner family with parameter $L_{D\setminus \{j\}}$. 
From
the proof of Theorem~\ref{ujBLYM} and the fact that in $\FF$ (\ref{allBLYM}) holds with equality we get
that equality must hold for all $j\in D$ and all $G\subseteq X_j$ for 
$\FF^{\prime}(G;D\setminus\{j\})$ in (\ref{eq-L-MLYM2}), i.e.
\begin{equation}\label{eq-DLYM}
\sum_{E\in\FF^{\prime}(G;D\setminus\{j\})}\frac{1}{\prod\limits_{\ell\in[M]\setminus(D\cup\{j\})}\binom{m_{\ell}}{|E\cap X_{\ell}|}}
=L_{D\setminus\{j\}}.
\end{equation}
Fixing a maximal chain $G_0\subsetneq G_1\subsetneq\cdots\subsetneq G_{m_j}$ in $X_j$ 
we get that $\FF^{\star}=\biguplus_{i=0}^{m_j} \FF^{\prime}(G_i;D\setminus\{j\})$ is a $(k-1)$-dimensional $k$-part Sperner multi-family
with parameters $L^{\star}_{P^{\star}}:=L_{P^{\star}\cup\{j\}}:~ P^{\star}\in\binom{D\setminus\{j\}}{k-1}$, moreover, using 
(\ref{eq-DLYM}) for
each $G_i$ we get that \begin{equation*}
\sum_{E\in\FF^{\star}}\frac{1}{\prod\limits_{\ell\in D\setminus\{j\}}\binom{m_{\ell}}{|E\cap X_{\ell}|}}
=\sum_{i=1}^{m_j}\left(
\sum_{E\in\FF(G_i;D\setminus\{j\})}\frac{1}{\prod\limits_{\ell\in D\setminus\{j\}}\binom{m_{\ell}}{|E\cap X_{\ell}|}}\right)=L_{D\setminus\{j\})}n_j.
\end{equation*}
Fix any $P^{\star}\in\binom{D\setminus\{j\}}{k-1}$. Then
$P^{\star}=D\setminus\{i,j\}$ for some $i\in D\setminus\{j\}$, and from the conditions of the theorem we get 
that 
$$L^{\star}_{P^{\star}}n_{i}=
L_{P^{\star}\cup\{j\}}n_i=L_{D\setminus \{i\})}n_i=
L_{D\setminus\{j\})}n_j,$$
therefore
 $\FF^{\star}$ is homogeneous by the induction hypothesis. This means that if $A\in\FF$ and $B$ is a set with the same profile vector as
 $A$, and $A\cap X_i=B\cap X_i$ for at least $M-k-1\geq 1$ values of $i$, then $\#[A,\FF]=\#[B,\FF]$.
As before, we get
that $\FF$ is homogeneous.
\end{proof}

\noindent{\sl Proof to Theorem~\ref{ujBLYM=}}:
Lemmata~\ref{1BLYM=}, \ref{kk+1BLYM=} and \ref{kMBLYM=} together proves part (i), and, as remarked earlier, part (ii) follows from the conditions.

\noindent
(iii): By part (i), equality in (\ref{allBLYM}) implies homogeneity, i.e. that for any $(i_1,\ldots,i_M)\in\pi_M$ there is a
positive integer $r_{i_1,\ldots,i_M}$ such that every set in $\FF$ that has profile vector $(i_1,\ldots,i_M)$ appears with multiplicity 
$r_{i_1,\ldots,i_M}$, and also equality in (\ref{eq-L-MLYM2}). Equality in (\ref{eq-L-MLYM2}) means
that for any chain product  $\CC:=\prod^M_{i=1}\CC_i$ where $\CC_i$ is a maximal chain in $X_i$, 
any given $P\in\binom {[M]}k$ and  any subset $F\subseteq X\setminus \cup_{i\in P} X_i$,
 each subproduct $\prod_{j\in P}\CC_j$  of maximal chains is covered exactly $L_{P}$ times by the elements of $\FF(F;P)$,
that is 
\begin{equation}
\label{eq-prop}
\Bigg\vert\Bmultiset{E^{\#[E,\FF(F;P)]}:E\in\prod_{j\in P}\CC_j}\Bigg\vert=L_{P}.
\end{equation}
For a given chain product $\CC=\prod_{i\in[M]}\CC_i$ of maximum-size chains $\CC_i$  in $X_i$, we define
\begin{equation*}
\FF[\CC]=\bmultiset{F^{\#[F,\FF]}: F\in\CC}.
\end{equation*}
Each  $F\in\FF[\CC]$ is uniquely determined from its profile vector $(f_1,\ldots,f_M)$. 
Let $\TT_{\CC}$ denote the multiset of all  profile vectors of the sets in $\FF[\CC]$, where
each profile vector appears with the multiplicity of its corresponding set in $\FF[\CC]$. Since
$\FF$ is homogeneous, $\TT_{\CC}$ does not depend on the choice of $\CC$.

We can describe now property (\ref{eq-prop}) of $\FF[\CC]$ in terms of its profile vectors as follows: 
for each subset $\{i_1<\ldots<i_{M-k}\}\in \binom {[M]}{M-k}$, and each $(M-k)$-tuple of coordinate values 
$(f_{i_1},\ldots,f_{i_{M-k}})\in\prod_{j=1}^{M-k} [n_{i_j}]^{\star}$ 
the set of vectors in $\TT_{\CC}$
where the $i_j$-th coordinate is $f_{i_j}$ for $j\in[M-k]$ has size $L_{M\setminus\{i_1,\ldots,i_{M-k}\}}$.
 Let $\TT$ denote the transversal corresponding to the homogeneous multi-family $\FF$.
Then clearly $\TT=\TT_{\CC}$ for every product of maximal chains $\CC=\prod^M_{i=1}\CC_i$.

We infer now that 
the $k$-dimensional multi-transversal $\TT$ is a simple MOA with symbol sets
$S_i=\{0,1,2\ldots,m_i\}$, of constraint
$M$, strength $M-k$, and index set $\mathbb{L}=\{L_P:P\in\binom{[M]}{k}\}$, with $\lambda({j_1},\ldots,{j_{M-k}})=
L_{[M]\setminus \{j_1,\ldots,j_{M-k}\}}$. This completes the proof of part (iii).

It is also clear that any MOA
 with the parameters described above is a $k$-dimensional
multi-transversal corresponding to a homogeneous $k$-dimensional $M$-part Sperner multi-family  $\FF$ with parameters
 $\{L_P: P\in\binom{[M]}{k}\}$   on a partitioned  $(m_1+\ldots+m_M$)-element underlying set, where the multiplicity of each element
 in $F\in\FF$ is the same as the multiplicity of its profile vector $(f_1,\ldots,f_M)$ in the multi-transversal,
 which  satisfies equality in (\ref{allBLYM}).
\hfill\qed

\section{Proofs to convex hull results}\label{hullproofs}

\noindent{\sl Proof to Lemma~\ref{th-blow}:} 
 will suffice to show that for every multi-family  $\HH\in \AAA$, there are non-negative coefficients $\lambda(I) $ for 
every $I\Subset\pi_M$  with $T(I)\in \mu(   \AAA(\LL))$, such that $\sum_I \lambda(I)=1$
and 
\begin{equation}\label{kell}
\sum_I \lambda(I)S(I)=\Pp(\HH).
\end{equation}
To this end, fix an $\HH\in \AAA$ and for all $H\subseteq X$ let $\HH_H=\multiset{H^{\#[H,\HH]}}$, with other words $\HH_H$ has $H$ with the same multiplicity as $\HH$, and it has no other elements.
Consider the sum
\begin{equation} \label{pthree}
\sum_{(\LL,H)} \frac{\Pp(\HH_H)}{\prod\limits_{j=1}^M (m_j!)}
\end{equation}
for all ordered pairs   $ (\LL,H) $,  where $\LL$ is a product-permutation, $H\subseteq X$, and $H$ is initial 
with respect to the
product-permutation $\LL$. We evaluate (\ref{pthree}) in two ways. The first way is:
\begin{eqnarray}  \nonumber
\sum_{(\LL,H)} \frac{\Pp(\HH_H)}{\prod\limits_{j=1}^M (m_j!)}&=&\sum_{\LL} \frac{1}{\prod\limits_{j=1}^M (m_j!)}    
\left(\sum_{H\subseteq X:\atop H \text{ is initial for } \LL}       \Pp(\HH_H)\right)\\
&=& \sum_{\LL} \frac{\Pp( \HH(\LL)) }{\prod\limits_{j=1}^M (m_j!)}.  \label{pfour}
\end{eqnarray}
Observe that $\Pp(\HH(\LL))\in \mu(\AAA(\LL))$, and therefore for every $\LL$ there is a unique $I$ such that $T(I)=\Pp(\HH(\LL))$. Collecting the identical
terms in the right side of  (\ref{pfour}),
\begin{equation} \label{pfive}
\sum_{\LL} \frac{\Pp(\HH(\LL))}{\prod\limits_{j=1}^M (m_j!)}=\sum_{T(I)\in \mu(\AAA(\LL))} \lambda(I) T(I),
\end{equation}
where $\lambda(I)$ is the proportion of the $\prod_{j=1}^M (m_j!)$ product-permutations such that $\Pp(\HH(\LL))=T(I)$,
thus $\sum_{T(I)\in \mu(\AAA(\LL))} \lambda(I)=1$.  
Consider a fixed set $H$ with profile vector $(i_1,i_2,\ldots,i_M)$. There are exactly $\prod_{j=1}^M(i_j!\cdot(m_j-i_j)!)$
product-chains to which $H$ is initial.  Using this, we also get: 
\begin{eqnarray}  \nonumber
\sum_{(\LL,H)} \frac{\Pp(\HH_H)}{\prod\limits_{j=1}^M(m_j!)}
&=&\sum_{H:H\subseteq X}\sum_{\LL:\atop H\text{\ is initial for\ }\LL} \frac{\Pp(\HH_H) }{\prod\limits_{j=1}^M(m_j!)}        \\
&=& \sum_{H:H\subseteq X}    \frac{ \prod\limits_{j=1}^M(i_j!\cdot(m_j-i_j)!) }{\prod\limits_{j=1}^M(m_j!)}\cdot\Pp(\HH_H)  \label{pseven}\\
&=&  \sum_{H:H\subseteq X} \frac{\Pp(\HH_H)   }{ \prod\limits_{j=1}^M\binom{m_j}{i_j}}
=\Biggl(\frac{p_{i_1,\ldots,i_M}(\HH)}{\binom{m_1}{i_1}\cdots  \binom{m_M}{i_M }}       \Biggl)_{(i_1,\ldots,i_M)\in\pi_M} 
\label{pnyolc}.
\end{eqnarray}
Combining (\ref{pthree}), (\ref{pfour}), (\ref{pfive}), (\ref{pseven}),  and (\ref{pnyolc}), we obtain 
$$  \Biggl(\frac{p_{i_1,\ldots,i_M}(\HH)}{\binom{m_1}{i_1   }\cdots  \binom{m_M}{i_M   }}       \Biggl)_{(i_1,\ldots,i_M)\in\pi_M} 
=     \sum_{T(I)\in \mu(\AAA(\LL))} \lambda(I) T(I),$$
which implies for all $(i_1,\ldots,i_M)\in\pi_M$ that
$$  p_{i_1,\ldots,i_M}(\HH)
=     \sum_{T(I)\in \mu(\AAA(\LL))}\left( \lambda(I) \left(\prod_{j=1}^M\binom{m_j}{i_j}\right)t_{i_1,\ldots,i_M}(I)\right).$$
This proves (\ref{kell}).
\hfill\qed
\bigskip

\noindent{\sl Proof to Theorem~\ref{hullappl}:} 
First observe that $\mu(\AAA(\LL))$ does not depend on $\LL$, so (\ref{eq-blow2}) holds. Next we have to show 
(\ref{eq-blow3}), i.e. we have to show that if 
$T(I) \in \mu(\AAA(\LL))$ for some $I\Subset\pi_M$ and all product-permutation $\LL$, then  $ S(I) \in
  \mu(\AAA)$. 
  
  Assume $I\Subset\pi_M$ and $T(I) \in \mu(\AAA(\LL))$ for all product-permutation $\LL$. 
  Then for each product-permutation $\LL$
  there is an $\HH_{\LL}\in\AAA$ such that
 $T(I)=\Pp(\HH_{\LL}(\LL))$.
 Since $\HH_{\LL}$, and therefore $\HH_{\LL}(\LL)$ as well, satisfies $\MMM_{\Gamma}$, we must have that
 $I$ satisfies $\MMM_{\Gamma}$. Let $\FF_{S(I)}$ be the homogeneous multi-family that realizes the profile matrix $S(I)$, then for
 all $(i_1,\ldots,i_M)\in\pi_M$ we have
 $\max\{\#[F,\FF_{S(I)}]:\forall j\,\,|F\cap X_j|=i_j\}=\#[(i_1,\ldots,i_M),I]$, consequently, $\FF_{S(I)}$ satisfies $\MMM_{\Gamma}$.
Thus $S(I)\notin \mu(\AAA)$ implies that the homogeneous multi-family 
$\FF_{S(I)}$ is not a $k$-dimensional $M$-part multi-family with parameters $L_P:~P\in\binom{[M]}{k}$. 
This means that there is a $P_0\in\binom{[M]}{k}$, sets $D_i$  for all $i\notin P_0$ and chains $\CC_j$  for all $j\in P_0$ such that
 \begin{equation}
\Bigg\vert
\Bmultiset{
F^{\#[F,\FF_{S(I)}]}:
\Big(F\cap\biguplus_{j\in P_0} X_j\Big)\in\prod_{j\in P_0} \CC_j, \forall i\in[M]\setminus P_0\,\,\, X_i\cap F=D_i
}
\Bigg\vert> L_{P_0}.
\label{eq-fail1}
\end{equation}
Take now a product-permutation $\LL_0$ in which all 
sets $D_i$ ($i\notin P_0$) and all elements of the chains $\CC_j$  ($j\in P_0$) are initial with respect to $\LL_0$. 
Since
$\Pp(\FF_{S(I)}(\LL_0))=T(I)$ we can rewrite (\ref{eq-fail1}) as
\begin{equation}\label{eq-fail2}
\sum_{(i_1,\ldots,i_M)\in\pi_M:\atop
\forall j\notin P_0\,\,i_j=|D_j|} t_{i_1,\ldots,i_M}(I)> L_{P_0}.
\end{equation}
 As $T(I)=\Pp(\HH_{\LL_0}(\LL_0))$,  (\ref{eq-fail2}) gives
\begin{equation}\label{eq-fail3}
\Bigg\vert
\Bmultiset{
F^{\#[F,\HH_{\LL_0}(\LL_0)]}:
\Big(F\cap\biguplus_{j\in P_0} X_j\Big)\in\prod_{j\in P_0} \CC_j, \forall i\in[M]\setminus P_0\,\,\, X_i\cap F=D_i
}
\Bigg\vert> L_{P_0}.
\end{equation}
However, from $\HH_{\LL_0}\in\AAA$ we get that $\HH_{\LL_0}$, and consequently $\HH_{\LL_0}(\LL_0)$ must
be $k$-dimensional $M$-part Sperner multi-families with parameters $L_P:P\in\binom{[M]}{k}$, contradicting (\ref{eq-fail3}). 
\hfill\qed
\bigskip

\noindent{\sl Proof to Lemma~\ref{lemma:LEM}:} Let $\AAA$ be family of $k$-dimensional $M$-part Sperner multi-families
that satisfy a $\Gamma$-multiplicity contraints $\MMM_{\Gamma}$, and let $I\Subset \pi_M$ be a
 $k$-dimensional multi-transversal 
with the same parameters $L_P$ satisfying the same $\Gamma$-multiplicity constraint $\MMM_{\Gamma}$.
Let $\LL$ be a fixed product-permutation, for each $(i_1,\ldots,i_M)\in I$ let $H_{(i_1,\ldots,i_M)}$ be 
the (unique) initial set with respect to $\LL$ with
profile vector $(i_1,\ldots,i_M)$ and let
$$\HH_{\LL}=\multiset{H_{(i_1,\ldots,i_M)}^{t_{i_1,\ldots,i_M}(I)}}.$$
It follows that $\HH_{\LL}(\LL)=\HH_{\LL}$, $\Pp(\HH_{\LL})=T(I)$, and from the properties of $I$ we have
that $\HH_{\LL}\in\AAA$. Therefore we get that $T(I)\in\mu(\AAA(\LL))$. 
By Theorem~\ref{th-blow2}, the vector $S(I)$ is present in the set on the right hand side of 
(\ref{eq-blow4}), whose extreme points agree with those of $\mu(\AAA)$, and by Theorem~\ref{hullappl}, $S(I)\in\mu(\AAA)$. 
All that remains to be shown is that
if $S(I)=\sum_{ T(I_u)\in \mu(\AAA(\LL))} \lambda(I_u)S(I_u)$ with $\lambda(I_u)\geq 0$ and 
$\sum_{ T(I_u)\in \mu(\AAA(\LL))} \lambda(I_u)=1$, then $I$ is among the 
$I_u$'s, and all others come with a zero coefficient. 
$S(I)=\sum_{ T(I_u)\in \mu(\AAA(\LL))} \lambda(I_u)S(I_u)$ 
means that for all $(i_1,\ldots,i_M)\in\pi_M$ we have
$$t_{i_1,\ldots,i_M}(I)\prod_{j=1}^M\binom{m_j}{i_j}=\sum_{ T(I_u)\in \mu(\AAA(\LL))} \lambda(I_u)t_{i_1,\ldots,i_M}(I_u)\prod_{j=1}^M\binom{m_j}{i_j},$$
which implies that  
$$T(I)=\sum_{ T(I_u)\in \mu(\AAA(\LL))} \lambda(I_u)T(I_u).$$ 
Let the ordering $\supp(I)=\{\vec{j}_1,\vec{j}_2,\ldots,\vec{j}_s\}$ show that $I$ has the LEM property.
Then for all $u$, $T_{\vec{j}_1}(I)\geq T_{\vec{j}_1}(I_u)$, and as the coefficients sum to 1, for all $u$,
$T_{\vec{j}_1}(I)= T_{\vec{j}_1}(I_u)$. This argument repeats to $\vec{j}_2,\ldots,\vec{j}_s$. Hence
for all $u$, 
$\supp(I)\subseteq \supp(I_u) $. If $\supp(I)$ is a proper subset of $\supp(I_u) $, then we must have $\lambda(I_u)=0$. Therefore for all the $I_u$ that have $\lambda(I_u)\ne 0$ we must have $\supp(I_u)=\supp(I)$, and consequently
$I_u=I$.
\hfill\qed

\section{Proofs for the results on transversals}
\label{MOAproofs}
We start with two lemmata.
\begin{lemma}\label{2n} Let  us be given $n_1,n_2$ positive integers, and set $\ell=\gcd(n_1,n_2)$, $m_i=\frac{n_i}{e\\}$
and $N=\lcm(n_1,n_2)=\frac{n_1n_2}{e\\}$.
For every $j\in[N]^{\star}$, there are exactly $\ell$ vectors
$(a_1,a_2)\in\pi_2$, such that
$
\fracpart{\frac{a_1}{n_1}+\frac{a_2}{n_2}}=\frac{j}{N}.
$
\end{lemma}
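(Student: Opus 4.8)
Write $\ell=\gcd(n_1,n_2)$ and $m_i=n_i/\ell$, so that $\gcd(m_1,m_2)=1$, $N=\lcm(n_1,n_2)=\ell m_1m_2$, and $n_1n_2=\ell N$. The plan is first to clear denominators and turn the fractional-part condition into a single linear congruence. Since
\[
\frac{a_1}{n_1}+\frac{a_2}{n_2}=\frac{a_1m_2+a_2m_1}{\ell m_1m_2}=\frac{a_1m_2+a_2m_1}{N},
\]
for $j\in[N]^{\star}$ we have $\fracpart{\frac{a_1}{n_1}+\frac{a_2}{n_2}}=\frac{j}{N}$ if and only if $a_1m_2+a_2m_1\equiv j\pmod N$. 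Thus the lemma reduces to showing: for each residue $j$ modulo $N$, the congruence $a_1m_2+a_2m_1\equiv j\pmod N$ has exactly $\ell$ solutions $(a_1,a_2)$ with $0\le a_1<n_1$ and $0\le a_2<n_2$.

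I would finish this by a counting argument through a group homomorphism. Consider $\varphi:\mathbb{Z}_{n_1}\times\mathbb{Z}_{n_2}\to\mathbb{Z}_N$ given by $\varphi(a_1,a_2)=a_1m_2+a_2m_1\bmod N$. This is well defined, because replacing $a_1$ by $a_1+n_1$ changes $a_1m_2$ by $n_1m_2=\ell m_1m_2=N$ (and similarly for $a_2$), and it is clearly additive. Its image is the subgroup of $\mathbb{Z}_N$ generated by $\varphi(1,0)=m_2$ and $\varphi(0,1)=m_1$, i.e. the cyclic subgroup generated by $\gcd(m_1,m_2,N)=\gcd(m_1,m_2)=1$, so $\varphi$ is onto. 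A surjective homomorphism of finite abelian groups has all fibres equal in size to $|\ker\varphi|=|\mathbb{Z}_{n_1}\times\mathbb{Z}_{n_2}|/|\mathbb{Z}_N|=n_1n_2/N=\ell$. Identifying $[n_1]^{\star}\times[n_2]^{\star}$ with $\mathbb{Z}_{n_1}\times\mathbb{Z}_{n_2}$ via least nonnegative residues, the set of pairs to be counted is exactly $\varphi^{-1}(j)$, which therefore has size $\ell$.

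I do not expect a genuinely hard step here; the only thing that needs care is the bookkeeping with the two moduli $n_i$ and $N$---in particular, checking that $\varphi$ descends to $\mathbb{Z}_{n_1}\times\mathbb{Z}_{n_2}$ and that the chosen representatives $0\le a_i<n_i$ form a full set of coset representatives---which is automatic in the homomorphism picture. If one prefers to avoid group language, the same count works directly: for fixed $a_1$, the congruence $a_2m_1\equiv j-a_1m_2\pmod N$ is solvable if and only if $m_1\mid j-a_1m_2$ (since $\gcd(m_1,N)=m_1$), and when solvable it has exactly one solution $a_2$ in $[n_2]^{\star}$, because its solutions mod $N$ are spaced $N/m_1=n_2$ apart; the solvability condition reads $a_1m_2\equiv j\pmod{m_1}$, which (as $\gcd(m_1,m_2)=1$) pins $a_1$ down modulo $m_1$ and hence is met by exactly $\ell$ values of $a_1$ in $[n_1]^{\star}$, for a total of $\ell\cdot 1=\ell$.
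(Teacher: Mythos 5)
Your proof is correct, and it is the same underlying idea as the paper's, just packaged more cleanly. The paper first establishes nonemptiness of each fibre $\DD_j$ by a Bezout argument, and then shows $|\DD_j|=|\DD_{j'}|$ for all $j,j'$ by writing down an explicit translation map $(c_1,c_2)\mapsto(d_1,d_2)$ with $d_i\equiv c_i+(b_i-a_i)\pmod{n_i}$, where $(a_1,a_2)\in\DD_j$ and $(b_1,b_2)\in\DD_{j'}$ are fixed base points; this map is precisely a coset shift for the kernel of your homomorphism $\varphi$. Your group-theoretic wrapper merges the two steps: surjectivity of $\varphi$ (via $\gcd(m_1,m_2)=1$, the same coprimality the paper uses with Bezout) gives nonemptiness, and the general fact that all fibres of a surjective homomorphism of finite abelian groups have size $|\ker\varphi|=n_1n_2/N=\ell$ gives equinumerosity for free. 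The bookkeeping you worry about (well-definedness on $\mathbb{Z}_{n_1}\times\mathbb{Z}_{n_2}$, representatives) is handled correctly. Your second, elementary count by fixing $a_1$ and solving a linear congruence for $a_2$ is also correct and is a genuinely different route, since it never invokes equinumerosity of fibres; it is arguably the most economical of the three. One small stylistic point: when you say the image is generated by $\gcd(m_1,m_2,N)$, note that this reduces to $\gcd(m_1,m_2)=1$ automatically because the latter already divides the former, so no extra argument is needed there.
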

\begin{proof}
Since $m_1,m_2$ are relatively prime, for any integer $j\in[N]^{\star}$
we have integers $z_1,z_2$ such that
$z_1m_2+z_2m_1=j$, therefore $\frac{z_1}{n_1}+\frac{z_2}{n_2}=\frac{j}{N}$. Taking
$a_i\in[n_i]^{\star}$ such that $a_i\equiv z_i\mod n_i$ we obtain that the required  vectors $(a_1,a_2)$
exist for any $j$. It is also clear that for any $(a_1,a_2)\in\pi_2$ there is some
$j\in[N]^{\star}$ such that $\fracpart{\frac{a_1}{n_1}+\frac{a_2}{n_2}}=\frac{j}{N}$.

So we define for any $j\in[N]^{\star}$
\begin{eqnarray*}
\DD_j=\left\{(a_1,a_2)\in\pi_2:\fracpart{\frac{a_1}{n_1}+\frac{a_2}{n_2}}=\frac{j}{N}\right\}.
\end{eqnarray*}
Fix $j\in[N]^{\star}$ and $(x_1,x_2)\in\DD_j$. For any $(y_1,y_2)\in\pi_2$ we have that
$(y_1,y_2)\in \DD_j$ iff $\frac{y_1-x_1}{n_1}+\frac{y_2-x_2}{n_2}$ is an integer.

The $\DD_j$ are nonempty and partition $\pi_2$. If
for each $j,j^{\prime}\in[N]^{\star}$, there is an injection from $\DD_j$ to $\DD_{j^{\prime}}$, then
$|\DD_j|=|\DD_{j^{\prime}}|$, and consequently
$|\DD_j|=\frac{n_1n_2}{N}=\ell$, which proves our statement. So we will construct such an injection.

Let $j,j^{\prime}\in[N]^{\star}$.
Fix an $(a_1,a_2)\in\DD_j$ and a $(b_1,b_2)\in\DD_{j^{\prime}}$. We define
the map $\phi:\DD_j\rightarrow\pi_2$ by
$\phi(c_1,c_2)=(d_1,d_2)\in\pi_2$ iff $d_i\equiv c_i+(b_i-a_i) \mod n_i$.
Clearly, the map is a well-defined injection, moreover, $\phi(a_1,a_2)=(b_1,b_2)$.

Assume that $(d_1,d_2)\in\phi(\DD_j)$. Then $(d_1,d_2)=\phi(c_1,c_2)$ for some
$(c_1,c_2)\in\DD_j$, and $d_i-b_i\equiv c_i-a_i\mod n_i$.
Thus
$\left(\frac{d_1-b_1}{n_i}+\frac{d_2-b_2}{n_2}\right)-\left(\frac{c_1-a_1}{n_1}+\frac{c_2-a_2}{n_2}\right)$ is an integer. Since $(c_1,c_2)\in \DD_j$, this implies
$\frac{d_1-b_1}{n_i}+\frac{d_2-b_2}{n_2}$ is also an integer, with other
words $(d_1,d_2)\in \DD_{j^{\prime}}$. Therefore $\phi(\DD_j)\subseteq\DD_{j^{\prime}}$.
\end{proof}

\begin{lemma}\label{generaln} Let $n_1,n_2,\ldots,n_k$ be given,
$K=\prod_{j=1}^{k} n_j$, $N=\lcm(n_1,\ldots,n_k)$ and
$\ell=\frac{K}{N}$.
For each $j\in[N]^{\star}$ we have that there are exactly $\ell$ vectors
$(a_1,\ldots,a_k)\in\pi_k$ such that
$$
\fracpart{\sum_{i=1}^k\frac{a_i}{n_i}}=\frac{j}{N}.
$$
\end{lemma}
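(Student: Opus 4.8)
The plan is to prove Lemma~\ref{generaln} by induction on $k$, using Lemma~\ref{2n} as the engine for combining variables two at a time. The base case $k=1$ is immediate: then $K=N=n_1$ and $\ell=1$, and for each $j\in[n_1]^\star$ the unique $a_1\in[n_1]^\star$ with $\fracpart{a_1/n_1}=j/n_1$ is $a_1=j$.

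For the inductive step, assume $k\ge 2$ and that the claim holds for $k-1$ variables. Set $N'=\lcm(n_1,\ldots,n_{k-1})$, $K'=\prod_{i=1}^{k-1}n_i$, and $\ell'=K'/N'$ (an integer, since the lcm divides the product). Since each $n_i$ with $i<k$ divides $N'$, every sum $\sum_{i=1}^{k-1}a_i/n_i$ has fractional part a multiple of $1/N'$, so the sets
\[
A_b=\Big\{(a_1,\ldots,a_{k-1})\in\pi_{k-1}:\fracpart{\textstyle\sum_{i=1}^{k-1}a_i/n_i}=b/N'\Big\},\qquad b\in[N']^\star,
\]
partition $\pi_{k-1}$, and by the induction hypothesis $|A_b|=\ell'$ for every $b$. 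Now fix $j\in[N]^\star$ and observe $N=\lcm(N',n_k)$. If $(a_1,\ldots,a_{k-1})\in A_b$ then $\sum_{i<k}a_i/n_i$ differs from $b/N'$ by an integer, so for every $a_k$ we have $\fracpart{\sum_{i=1}^k a_i/n_i}=\fracpart{b/N'+a_k/n_k}$. Hence a vector $(a_1,\ldots,a_k)\in\pi_k$ satisfies $\fracpart{\sum a_i/n_i}=j/N$ exactly when the pair $(b,a_k)\in[N']^\star\times[n_k]^\star$ with $(a_1,\ldots,a_{k-1})\in A_b$ satisfies $\fracpart{b/N'+a_k/n_k}=j/N$. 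Since each qualifying value of $b$ comes from exactly $\ell'$ vectors $(a_1,\ldots,a_{k-1})$, the number of solutions equals $\ell'$ times the number of such pairs $(b,a_k)$; and by Lemma~\ref{2n} applied to the two moduli $N'$ and $n_k$ (whose lcm is $N$), that number of pairs is exactly $\gcd(N',n_k)$. Thus the number of solutions is $\ell'\gcd(N',n_k)$.

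It remains to check $\ell'\gcd(N',n_k)=\ell=K/N$. This is just the standard identity $N'n_k=\lcm(N',n_k)\gcd(N',n_k)$: indeed $K=K'n_k=\ell'N'n_k$ and $N=\lcm(N',n_k)=N'n_k/\gcd(N',n_k)$, so $K/N=\ell'N'n_k\cdot\gcd(N',n_k)/(N'n_k)=\ell'\gcd(N',n_k)$, completing the induction. I do not expect a genuine obstacle here; the only point requiring care is the grouping bookkeeping — verifying that the $A_b$ genuinely partition $\pi_{k-1}$ into classes of equal size $\ell'$ (which is precisely the induction hypothesis), so that counting over pairs $(b,a_k)$ scales by the factor $\ell'$, and then reconciling $\ell'\gcd(N',n_k)$ with $K/N$.
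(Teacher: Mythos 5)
Your proof is correct and follows essentially the same inductive approach as the paper: combine the first $k-1$ coordinates via the induction hypothesis, identify their fractional parts with multiples of $1/N'$, and apply Lemma~\ref{2n} to the two moduli $N'$ and $n_k$, concluding with the $\lcm\cdot\gcd$ identity. The only cosmetic difference is that the paper treats $k=2$ as a separate base case handled directly by Lemma~\ref{2n}, whereas you fold it into the general inductive step starting from $k=1$; both bookkeepings are fine.
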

\begin{proof} We prove the statement
by induction on $k$. The statement is clearly true for $k=1$(when $N=n_1$ and $\ell=1$); and it was proved in Lemma~\ref{2n} for $k=2$. So assume that $k>2$ and we know the statement already for all
$1\le k^{\prime}\le k-1$.

It is clear that for any $(a_1,\ldots,a_k)\in\pi_k$ we have precisely one
$j\in[N]^{\star}$ such
that $\fracpart{\sum_{i=1}^k\frac{a_i}{n_i}}=\frac{j}{N}$.
Let $K_1=\prod_{j=1}^{k-1} n_j$, $N_1=\lcm(n_1,\dots,n_{k-1})$ and $\ell_1=\frac{K_1}{N_1}$,
and $\ell_2=\gcd(N_1,n_k)$.
Then $K=K_1n_k$, $N=\lcm(N_1,n_k)$ and $\ell=\frac{K_1 n_k}{\lcm(N_1,n_k)}
=\frac{K_1}{N_1}\cdot\frac{N_1n_k}{\lcm(N_1,n_k)}=\ell_1\ell_2
$.

Fix a $j\in[N]^{\star}$. Note that for integers $a_i$,
$\fracpart{\sum_{i=1}^{k-1}\frac{a_i}{n_i}}\in\{\frac{j^{\prime}}{n}:j^{\prime}\in[N_1]^{\star}\}$,
and for real numbers $c,d$ we have $\fracpart{\fracpart{c}+\fracpart{d}}=\fracpart{c+d}$.
By Lemma~\ref{2n}, there are precisely $\ell_2$ pairs $(b,a_k)\in[N_1]^{\star}\times [n_k]^{\star}$ such
that $\fracpart{\frac{b}{N_1}+\frac{a_k}{n_k}}=\frac{j}{N}$. By the induction hypothesis
for each $b\in[N_1]$ there are precisely $\ell_1$ values $(a_1,\ldots,a_{k-1})\in\pi_{k-1}$
such that $\fracpart{\sum_{j=1}^{k-1}\frac{a_j}{n_j}}=\fracpart{\frac{b}{N_1}}$.
Since $\ell_1\ell_2=\ell$, the statement follows.
\end{proof}
\noindent{\sl Proof  to Lemma~\ref{Engelprime}:}
By Lemma~\ref{generaln} the statement is equivalent with
\begin{equation*}
\Bigg\vert\left\{j\in[N]^{\star}: \fracpart{\alpha+\frac{j}{N}}\in[\beta,\beta+\mu)\}
\right\}\Bigg\vert
\in\left\{\lceil\mu N\rceil,\lfloor \mu N\rfloor\right\}
\end{equation*}
which follows from Lemma~\ref{1dim}. \hfill\qed

\noindent{\sl Proof to Theorem~\ref{fotetel}:} Assume that $\mu$ satisfies condition
(\ref{gencond}) and $0\le\beta\le 1-\mu$.
Fix $P\in\binom{[M]}{k}$ and for each
$j\notin P$ fix a $b_j\in [n_j]$. Then Condition (\ref{korlat})
follows from Lemma~\ref{Engelprime} using
$\alpha=\sum_{j\notin P}\frac{b_j}{n_j}$; thus
$\mathbb{C}(n_1,\ldots,n_M;\beta,\mu)$ is a $k$-dimensional transversal
with the given parameters $L_P$.

Assume now further that for $P_0\in\binom{[M]}{k}$ we have that
$\mu=\frac{L_{P_0}}{K_{P_0}}$ (as this is equivalent with
$\mu=\min_{P}\frac{L_P}{K_P}$). Then
we have that
\begin{eqnarray*}
L_{P_0}\ge d_{P_0}\lceil\mu N_{P_0}\rceil
=d_{P_0}\left\lceil \frac{L_{P_0}}{d_{P_0}}\right\rceil\ge L_{P_0},
\end{eqnarray*}
which implies that $\mu N_{P_0}$ is an integer, i.e. by
Lemma~\ref{Engelprime} our transversal is full.
\hfill\qed

\section{Acknowledgements}

This research started at the ``Search Methodologies II'' workshop at the Zentrum f\"ur interdisziplin\"are Forschung of
Universit\"at Bielefeld, where the last two authors met Professor Ahlswede for the last time. Special thanks go to Professor Charles
Colbourn for his encouragement to continue our investigation in this direction.

\bibliographystyle{plain}

\end{document}